\documentclass[12pt]{article}

\usepackage{amssymb,a4}
\usepackage{amsmath,amsfonts,amssymb,amsthm}

\setlength{\topmargin}{0mm}
\setlength{\oddsidemargin}{0mm}
\setlength{\evensidemargin}{0mm}
\setlength{\textheight}{210mm}
\setlength{\textwidth}{160mm}


\newcommand{\Z}{{\mathbb Z}}

\newcommand{\C}{{\mathbb C}}

\newcommand{\N}{{\mathbb N}}

\newcommand{\A}{{\mathcal A}}
\newcommand{\B}{{\mathcal B}}
\newcommand{\E}{{\mathcal E}}
\newcommand{\D}{{\mathcal D}}

\def\<{\langle}
\def\>{\rangle}

\def\g{\mathfrak g}

\def\End{\mathrm{End}}
\def\Hom{\mathrm{Hom}}

\newtheorem{thm}{Theorem}[section]
\newtheorem{prop}[thm]{Proposition}
\newtheorem{lem}[thm]{Lemma}
\newtheorem{cor}[thm]{Corollary}
\newtheorem{rmk}[thm]{Remark}
\newtheorem{definition}[thm]{Definition}

\begin{document}

\makeatletter \@addtoreset{equation}{section}
\def\theequation{\thesection.\arabic{equation}}
\makeatother \makeatletter

\begin{center}
{\Large \bf   Ding-Iohara algebras and quantum vertex algebras}
\end{center}

\begin{center}
{Haisheng Li$^{a,b}$\footnote{Partially supported by
 China NSF grant (Nos.11471268, 11571391)},
Shaobin Tan$^{b}$\footnote{Partially supported by China NSF grant (Nos.11471268, 11531004)}
and Qing Wang$^{b}$\footnote{Partially supported by
 China NSF grants (Nos.11531004, 11622107), Natural Science Foundation of Fujian Province
(No. 2016J06002)}\\
$\mbox{}^{a}$Department of Mathematical Sciences\\
Rutgers University, Camden, NJ 08102, USA\\
$\mbox{}^{b}$School of Mathematical Sciences, Xiamen University,
Xiamen 361005, China}
\end{center}

\begin{abstract}
In this paper, we associate quantum vertex algebras to  a certain family of associative algebras $\widetilde{\A}(g)$
which are essentially Ding-Iohara algebras. To do this, we introduce another closely related family
of associative algebras $\A(h)$. The associated quantum vertex algebras are based on the vacuum modules
for $\A(h)$, whereas $\phi$-coordinated modules for these quantum vertex algebras are associated to
$\widetilde{A}(g)$-modules. Furthermore, we classify their irreducible $\phi$-coordinated modules.
\end{abstract}

\section{Introduction}
This paper is  from a vertex algebra point of view to study a family of associative algebras $\widetilde{\A}(g)$
with $g(z)$ a rational function such that $g(z)g(1/z)=1$.
By definition, $\widetilde{\A}(g)$ is the associative unital algebra over $\mathbb{C}$, generated by
$$E_{n},\   F_{n},\    \Psi_{n}\   \   (n\in \mathbb{Z}),$$
 subject to relations
\begin{eqnarray*}
&&E(z)E(w)=\iota_{w,z}(g(w/z))E(w)E(z), \    \   \
F(z)F(w)=\iota_{w,z}(g(z/w))F(w)F(z),\label{eq:5.1}\\
&&\Psi(z)E(w)=\iota_{w,z}(g(w/z))E(w)\Psi(z), \    \   \
\Psi(z)F(w)=\iota_{w,z}(g(z/w))F(w)\Psi(z),\label{eq:5.2}\\
&&[E(z),F(w)]=\delta\left(\frac{w}{z}\right)\Psi(w),\   \   \   [\Psi(z),\Psi(w)]=0,
\end{eqnarray*}
where
$$E(z)=\sum_{n\in \mathbb{Z}}E_{n}z^{-n},\   \   \   F(z)=\sum_{n\in \mathbb{Z}}F_{n}z^{-n},\   \  \
\Psi(z)=\sum_{n\in \mathbb{Z}}\Psi_{n}z^{-n}.$$
Note that in the special case with $$g(z)=\prod_{j=1,2,3}\frac{1-q_j^{-1}z}{ 1-q_jz},$$
where $q_1,q_2,q_3$ are nonzero complex numbers such that $q_1q_2q_3=1$,
 $\widetilde{\A}(g)$ is (essentially) the algebra $\E$ which had appeared in \cite{bfmzz} (with $h(z)=g(z)$).
On the other hand, these algebras are essentially Ding-Iohara algebras of level zero (see \cite{ding-iohara}).

Ding-Iohara algebras are a family of Hopf algebras, generalizing quantum affine algebras $U_{q}(\widehat{\g})$.
In the case $\g=sl_2$, Ding-Iohara algebras, which are parametrized by a rational function $g(z)$ such that
$g(z)g(1/z)=1$, are generated by the modes of fields $e(z), f(z), \psi^{\pm }(z)$, and by invertible central elements
$\gamma^{\pm 1/2}$.
In the past, Ding-Iohara algebras had been studied by many people in various directions (see for example \cite{FT}, \cite{afhksy}).

In this paper, we study Ding-Iohara algebras $\widetilde{\A}(g)$ from a vertex-algebra point of view
and our main goal is to establish a natural connection of these algebras
with vertex algebras or more generally quantum vertex algebras in some sense.
As the main results of this paper, we  associate quantum vertex algebras  and
 their $\phi$-coordinated modules in the sense of  \cite{Li-nonlocal} and \cite{Li-phi}
 to the Ding-Iohara algebras $\widetilde{\A}(g)$.

In literature, there have been several theories of quantum vertex (operator) algebras, where the better known
representatives are the (Edward) Frenkel-Reshetikhin theory of deformed chiral algebras (see \cite{FR}),
 the Etingof-Kazhdan theory of quantum vertex operator algebras (see \cite{EK}),  and the Borcherds theory of quantum vertex algebras (see \cite{Bqva}). Each of these theories, which are different in certain ways, has its own  interest.
While these pioneer works provide important foundations in the study on
quantum vertex algebras,  the general theory is yet to be fully developed.

For many years, we have been extensively studying quantum vertex algebras
 (see \cite{Li-nonlocal, Li-const, Li-phi, Li6}, \cite{KL}, \cite{LTW}), essentially along the line of Etingof-Kazhdan's theory.
In nature, Etingof-Kazhdan's quantum theory is in the sense of formal deformation, where
quantum vertex operator algebras in this sense are formal deformations of vertex algebras.
Note that among the important properties of vertex algebras are
(weak) associativity and commutativity (namely locality).  For quantum vertex operator algebras,
 (weak) associativity is postulated while locality is replaced with what was called $S$-locality (see \cite{EK}).
 Motivated by Etingof-Kazhdan's theory,  we developed a theory of (weak) quantum vertex algebras, where
weak quantum vertex algebras, instead of being formal deformations of vertex algebras,  are generalizations of
vertex algebras and vertex super-algebras.
Just as with vertex algebras,  for a general weak quantum vertex algebra one has the notion of module
(and that of twisted module (see \cite{LTW})).
A conceptual result (see \cite{Li-nonlocal}) is that every $S$-local set of vertex operators (namely fields)
on a general vector space $W$ generates a weak quantum vertex algebra in a certain canonical way,
with $W$ as a natural module.

Note that  Anguelova and Bergvelt developed a theory of what were called $H_{D}$-quantum vertex algebras
(see \cite{AB}).

To associate quantum vertex algebras to certain algebras such as quantum affine algebras,
a theory of what were called $\phi$-coordinated (quasi) modules for weak quantum vertex algebras
was developed  in \cite{Li-phi}. In this theory, $\phi$  is what was called therein an associate of
the $1$-dimensional additive formal group (law), which is $F_{\bf a}(x,y)=x+y\in \C[[x,y]]$.
By definition, an associate of $F_{\bf a}$ is
a formal series $\phi(x,z)\in \C((x))[[z]]$ such that
$$\phi(x,0)=x,\   \   \phi(\phi(x,y),z))=\phi(x,y+z).$$
With $F_{\bf a}$ clearly being an associate for itself, it was proved that all associates of
$F_{\bf a}$ can be obtained by
$$\phi(x,z)=e^{zp(x)\frac{d}{dx}}\cdot x,$$
where $p(x)\in \C((x))$. Note that $\phi(x,z)=F_{\bf a}(x,z)$ for $p(x)=1$ and
$\phi(x,z)=xe^{z}$ for $p(x)=x$, another particular associate.
The essence is that the usual associativity, which is governed by the formal group law $F_{\bf a}$,
 is generalized to $\phi$-associativity, which is governed by a general associate $\phi$ of $F_{\bf a}$.
Let $\phi(x,z)$ be an associate of $F_{\bf a}$. For a weak quantum vertex algebra  $V$,
a $\phi$-coordinated $V$-module by definition is a vector space $W$ equipped with a linear map
$$Y_{W}(\cdot,x): V\rightarrow (\End W)[[x,x^{-1}]]; \ v\mapsto Y_{W}(v,x),$$
satisfying the conditions that $Y_{W}(v,x)w\in W((x))$ for $v\in V,\ w\in W$, $Y_{W}({\bf 1},x)=1$, and
that {\em weak $\phi$-associativity} holds: For any $u,v\in V$, there exists a nonnegative integer $k$ such that
$$(x_1-x_2)^{k}Y_{W}(u,x_1)Y_{W}(v,x_2)\in \Hom (W,W((x_1,x_2))),$$
$$(\phi(x_2,z)-x_2)^{k}Y_{W}(Y(u,z)v,x_2)=\left((x_1-x_2)^{k}Y_{W}(u,x_1)Y_{W}(v,x_2)\right)|_{x_1=\phi(x_2,z)}.$$
In this paper, we shall associate $\phi$-coordinated modules
with $\phi(x,z)=xe^z$ for certain quantum vertex algebras to the Ding-Iohara algebras $\widetilde{\A}(g)$.

First, we construct the desired quantum vertex algebras.
For this, we introduce certain counterparts of  $\widetilde{\A}(g)$, another family of associative algebras.
Let $h(z)\in {\mathbb{C}}[[z]]$ such that $h(z)h(-z)=1$. For example,
 \begin{eqnarray*}
 h(x)=\iota_{x,0}(g(e^{x}))\in \C((x)),
 \end{eqnarray*}
 where $g(z)$ is a rational function as before and $\iota_{x,0}(g(e^{x}))$ denotes the formal Laurent series expansion
 of $g(e^{x})$ at $x=0$.
We define $\A(h)$ to be the associative unital algebra over $\mathbb{C}$ with generators
$$e_{n},\   f_{n},\    \psi_{n}\   \   (n\in \mathbb{Z}),$$
 subject to relations
\begin{eqnarray*}
&&e(z)e(w)=h(w-z)e(w)e(z), \    \   \    f(z)f(w)=h(z-w)f(w)f(z),\label{eqee}\\
&&\psi(z)e(w)=h(w-z)e(w)\psi(z), \    \  \   \psi(z)f(w)=h(z-w)f(w)\psi(z),\label{eqpsie}\\
&&[e(z),f(w)]=z^{-1}\delta\left(\frac{w}{z}\right)\psi(z),  \   \   \
[\psi(z),\psi(w)]=0,\label{eqpsipsi}
\end{eqnarray*}
where $a(x)=\sum_{n\in \mathbb{Z}}a_{n}x^{-n-1}$ for $a=e,f,\psi$. Let $V_{\A(h)}$ be the vacuum $\A(h)$-module
in the sense that $V_{\A(h)}$ is the $\A(h)$-module generated by a vector  ${\bf 1}$, called the vacuum vector, such that
$e_{n}{\bf 1}=f_{n}{\bf 1}=\psi_{n}{\bf 1}=0$ for all $n\ge 0$.
Then we show that there exists a weak quantum vertex algebra structure in the sense of \cite{Li-nonlocal}
on $V_{\A(h)}$. By making use of an affine vertex (super)algebra we construct $V_{\A(h)}$  and determine a basis of P-B-W type.
We show that the associated  weak quantum vertex algebras are non-degenerate in the sense of
Etingof-Kazhdan, proving that  they are quantum vertex algebras.  On the other hand, we show that a
suitably defined restricted $\widetilde{\A}(g)$-module amounts to a $\phi$-coordinated $V_{\A(h)}$-module.

This paper is organized as follows: Section 2 is preliminaries; In this section we recall basic notions and results about  (weak) quantum vertex algebras, their modules, and their $\phi$-coordinated modules.
In Section 3,  we introduce the associative algebra $\A(h)$ for each series $h(x)\in \C[[x]]$ with $h(x)h(-x)=1$,
and we construct a weak quantum vertex algebra $V_{\A(h)}$.
In Section 4, we determine the structure of the weak quantum vertex algebra $V_{\A(h)}$. In particular,
we prove that they are non-degenerate in the sense of Etingof-Kazhdan.
In Section 5, for each rational function $g(z)$ with $g(z)g(1/z)=1$, we  introduce an associative algebra
$\widetilde{\A}(g)$
and we identify suitably defined restricted $\widetilde{\A}(g)$-modules with $\phi$-coordinated modules for the quantum vertex algebra $V_{\A(h)}$ with $h(x)=g(e^{x})$.

\section{Preliminaries}
In this section,  we recall from \cite{Li-nonlocal} and \cite{Li-phi} some basic notations and results on quantum vertex algebras
and their modules, including the conceptual construction of (weak) quantum vertex algebras and
 modules.

Throughout this paper, $\N$ denotes the set of nonnegative integers,
 $\mathbb{C}^{\times}$ denotes the multiplicative group of nonzero complex numbers
 (while $\C$ denotes the complex number field), and
the symbols $x,y,x_{1},x_{2},\dots $ denote mutually commuting independent formal variables.
All vector spaces in this paper are
considered to be over  $\mathbb{C}$.

For a vector space $U$, $U((x))$ is the vector space of lower
truncated integer power series in $x$ with coefficients
 in $U$, $U[[x]]$ is the vector space of nonnegative integer
 power series in $x$ with coefficients in $U$, and
$U[[x,x^{-1}]]$ is the vector space of doubly infinite integer power series in $x$ with coefficients in $U$.

We now begin by recalling the definitions of nonlocal vertex algebra and module
(see \cite{Li-nonlocal}, \cite{Li-g1}; cf. \cite{Bva}, \cite{BK}).

\begin{definition}\label{defnlva}
{\em A {\em nonlocal vertex algebra} is a vector space $V$ equipped with a linear map
      \begin{eqnarray*}
             Y(\cdot,x) :&&V\longrightarrow \mathrm{Hom}(V,V((x)))\subset (\mathrm{EndV})[[x,x^{-1}]]\\
              &&v\longmapsto Y(v,x)=\sum_{n\in\mathbb{Z}}v_{n}x^{-n-1}\ \ (\mbox{where }v_{n}\in \End V)
      \end{eqnarray*}
              and equipped with a distinguished vector $\textbf{1}\in V$, called the {\em vacuum vector},
              satisfying the conditions that
              $$Y(\textbf{1},x)v=v,$$
              $$Y(v,x)\textbf{1}\in V[[x]] \;\;\mbox{and}\;\; \lim_{x\rightarrow  0}Y(v,x)\textbf{1} = v\   \   \mbox{ for }v\in V,$$
              and  that for $u,v,w\in V$, there exists a nonnegative integer $l$ such that
       \begin{eqnarray*}
             (x_0+x_2)^{l}Y(u,x_{0}+x_2)Y(v,x_{2})w=(x_0+x_2)^{l}Y(Y(u,x_0)v,x_2)w.
         \end{eqnarray*}}
\end{definition}

\begin{definition}\label{defmodule}
{\em Let $V$ be a nonlocal vertex algebra. A {\em $V$-module} is a vector space
                  $W$ equipped with a linear map
   \begin{eqnarray*}
     Y_{W}(\cdot,x) :&&V\longrightarrow \mathrm{Hom}(W,W((x)))\subset (\mathrm{EndW})[[x,x^{-1}]]\\
              && v\longmapsto Y_{W}(v,x)=\sum_{n\in\mathbb{Z}}v_{n}x^{-n-1}\ \ (\mbox{where }v_{n}\in \End W),
   \end{eqnarray*}
              satisfying the conditions that
                   $$ Y_{W}(1,x) = 1_{W}\ \ (\mbox{the identity operator on }W)$$
                    and  that for $u,v\in V$, $w\in W$, there exists a nonnegative integer $l$
                    such that
                    \begin{eqnarray*}
              &&(x_0+x_2)^{l}Y_{W}(u,x_{0}+x_2)Y_{W}(v,x_{2})w=(x_0+x_2)^{l}Y_{W}(Y(u,x_0)v,x_2)w.
                         \end{eqnarray*}}
\end{definition}

The last condition in Definitions \ref{defnlva} and \ref{defmodule} is often referred to as {\em weak associativity}.

Recall from \cite{Li-nonlocal}  the following notion of weak quantum vertex algebra:

\begin{definition}
{\em A {\em weak quantum vertex algebra} is a vector space $V$ equipped with a linear map
             $$Y(\cdot,x) :V\rightarrow \mathrm{Hom}(V,V((x)))\subset (\End V)[[x,x^{-1}]]$$
             and a vector $\textbf{1}\in V$,
              satisfying the conditions that for $v\in V$,
              $$Y(\textbf{1},x)v=v,$$
              $$Y(v,x)\textbf{1}\in V[[x]] \;\;\mbox{and}\;\; \lim_{x\rightarrow  0}Y(v,x)\textbf{1} = v,$$
 and that for $u,v\in V$, there exists $\sum_{i=1}^{r}v^{(i)}\otimes u^{(i)}\otimes f_{i}(x)\in V\otimes V\otimes {\mathbb{C}}((x))$
 such that
 \begin{eqnarray}\label{eSjacobi}
  &&x_{0}^{-1}\delta\left(\frac{x_{1}-x_{2}}{x_{0}}\right)Y(u,x_{1})Y(v,x_{2}) -
     x_{0}^{-1}\delta\left(\frac{x_{2}-x_{1}}{-x_{0}}\right)\sum_{i=1}^{r}f_{i}(-x_{0})Y(v^{(i)},x_{2})Y(u^{(i)},x_{1})\nonumber\\
 &&\hspace{3cm}= x_{2}^{-1}\delta\left(\frac{x_{1}-x_{0}}{x_{2}}\right)Y(Y(u,x_{0})v,x_{2}) 
 \end{eqnarray}
 (the {\em $\mathcal{S}$-Jacobi identity}).}
\end{definition}

Note that  the $\mathcal{S}$-Jacobi identity (\ref{eSjacobi}) implies the weak associativity,
so that a weak quantum vertex algebra is automatically a nonlocal vertex algebra.
On the other hand, it is clear that the notion of weak quantum vertex algebra generalizes that of
vertex algebra and vertex super-algebra.

For a weak quantum vertex algebra $V$, a {\em $V$-module} is defined to be a module for $V$
viewed as a nonlocal vertex algebra.
The following was proved in \cite{Li-g1}:

\begin{prop}
Let $V$ be a weak quantum vertex algebra and let $(W,Y_{W})$ be any $V$-module.
Then, for $u,v\in V$, whenever (\ref{eSjacobi}) holds, we have
 \begin{eqnarray*}
  &&x_{0}^{-1}\delta\left(\frac{x_{1}-x_{2}}{x_{0}}\right)Y_{W}(u,x_{1})Y_{W}(v,x_{2}) -
     x_{0}^{-1}\delta\left(\frac{x_{2}-x_{1}}{-x_{0}}\right)\sum_{i=1}^{r}f_{i}(-x_{0})Y_{W}(v^{(i)},x_{2})Y(u^{(i)},x_{1})\\
  &&\hspace{3cm}= x_{2}^{-1}\delta\left(\frac{x_{1}-x_{0}}{x_{2}}\right)Y_{W}(Y(u,x_{0})v,x_{2}).
\end{eqnarray*}
\end{prop}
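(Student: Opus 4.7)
My plan is to transfer the $\mathcal{S}$-Jacobi identity from $V$ to the module $W$ by using the standard paradigm that this identity is equivalent, given weak associativity, to an $\mathcal{S}$-locality relation. Since weak associativity on $W$ is built into the definition of a module, the main task reduces to propagating $\mathcal{S}$-locality from $V$ to $W$. This is the strategy developed in \cite{Li-nonlocal} and \cite{Li-g1}.

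First, I would extract from the $\mathcal{S}$-Jacobi identity on $V$ the corresponding $\mathcal{S}$-locality relation on $V$: choose $k \in \N$ large enough that $x^k f_i(x) \in \C[[x]]$ for every $i$ and $x^k Y(u,x)v \in V[[x]]$; multiplying the $\mathcal{S}$-Jacobi identity by $x_0^k$ and then computing $\mathrm{Res}_{x_0}$ of $x_0^j$ times the identity for various $j \ge 0$ by standard delta-function calculus yields
$$(x_1-x_2)^k Y(u,x_1) Y(v,x_2) = (x_1-x_2)^k \sum_{i=1}^r f_i(x_2-x_1) Y(v^{(i)}, x_2) Y(u^{(i)}, x_1)$$
as an identity in $\mathrm{Hom}(V, V((x_1, x_2)))$.

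The crux is then to lift this $\mathcal{S}$-locality to $W$. Fix $w \in W$. By weak associativity on $W$, for sufficiently large $l$ one has
$$(x_1-x_2)^l Y_W(u, x_1) Y_W(v, x_2) w = \iota_{x_1,x_2} \bigl[(x_1-x_2)^l Y_W(Y(u, x_1-x_2)v, x_2) w\bigr].$$
A parallel application of weak associativity on $W$, after using the $\mathcal{S}$-Jacobi identity on $V$ to relate the swapped operator-valued object on $V$ to $Y(u,x_0)v$, provides a corresponding expression for $\sum_i f_i(x_2-x_1) Y_W(v^{(i)}, x_2) Y_W(u^{(i)}, x_1) w$ as the $\iota_{x_2,x_1}$-expansion of an object built from the same vector series $Y(u, x_0)v$. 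The $\mathcal{S}$-locality on $V$ already records precisely the matching between these two expansions at the level of $V$, and this matching propagates to $W$ through the module map, producing the $\mathcal{S}$-locality identity on $W$.

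Once $\mathcal{S}$-locality on $W$ is in hand, the full $\mathcal{S}$-Jacobi identity on $W$ is reconstructed by combining it with weak associativity on $W$ and the three-term delta identity
$$x_0^{-1} \delta\!\left(\frac{x_1-x_2}{x_0}\right) - x_0^{-1} \delta\!\left(\frac{x_2-x_1}{-x_0}\right) = x_2^{-1} \delta\!\left(\frac{x_1-x_0}{x_2}\right).$$
The main obstacle is the middle step: the $\mathcal{S}$-locality relation on $V$ is inherently a statement about operators on $V$, and propagating it to $W$ requires carefully exploiting weak associativity on $W$ in both the $(x_1, x_2)$- and $(x_2, x_1)$-orderings, together with the rewriting of the ordered series $Y(u, x_0)v$ on $V$ afforded by the $\mathcal{S}$-Jacobi identity.
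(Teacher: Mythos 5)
The paper itself does not prove this proposition (it is imported from \cite{Li-g1}), so I am judging your plan against the argument in that source. Your outer skeleton is the standard and correct one: extract $\mathcal{S}$-locality on $V$ from the $\mathcal{S}$-Jacobi identity (your first step is fine as written), establish the corresponding $\mathcal{S}$-locality on $W$, and then recombine it with weak associativity on $W$ and the three-term delta identity to reassemble the $\mathcal{S}$-Jacobi identity on $W$. The first and last steps are routine formal calculus.

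The genuine gap is precisely at the step you yourself flag as ``the crux,'' and as written it does not go through. Applying weak associativity on $W$ to the reversed products identifies, for fixed $w$ and large $l$, the series $(x_0'+x_1)^{l}Y_W(v^{(i)},x_0'+x_1)Y_W(u^{(i)},x_1)w$ with $(x_0'+x_1)^{l}Y_W\bigl(Y(v^{(i)},x_0')u^{(i)},x_1\bigr)w$ --- an iterate \emph{anchored at $x_1$} --- whereas the direct product is identified with $Y_W(Y(u,x_0)v,x_2)w$, anchored at $x_2$. Asserting that the matching recorded by $\mathcal{S}$-locality on $V$ ``propagates to $W$ through the module map'' does not bridge these two anchors: the $\mathcal{S}$-locality relation on $V$ is an identity of operators on $V$ and cannot be applied to a vector of $W$. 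The missing mechanism consists of two auxiliary facts that must be stated and proved: (i) $\mathcal{S}$-skew-symmetry on $V$, namely $\sum_{i}f_{i}(-x_0)Y(v^{(i)},-x_0)u^{(i)}=e^{-x_0\mathcal{D}}Y(u,x_0)v$ where $\mathcal{D}a=a_{-2}\mathbf{1}$, which one obtains by applying the $\mathcal{S}$-locality relation on $V$ to $\mathbf{1}$ and using $Y(a,x)\mathbf{1}=e^{x\mathcal{D}}a$; and (ii) the translation property on modules, $Y_W(e^{x_0\mathcal{D}}a,x)=Y_W(a,x+x_0)$, which follows from weak associativity on $W$ with middle entry $\mathbf{1}$. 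Together these convert $\sum_i f_i(-x_0)Y_W\bigl(Y(v^{(i)},-x_0)u^{(i)},x_1\bigr)w$ into $Y_W(Y(u,x_0)v,x_1-x_0)w$, which then matches the direct-product side under $x_1=x_2+x_0$ and yields $\mathcal{S}$-locality on $W$. Without (i) and (ii) your middle step is a restatement of the goal rather than a proof of it, and that step is the entire content of the proposition.
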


Recall that a {\em rational quantum Yang-Baxter operator} on a vector space $U$ is a linear map
 $$S(x):\   U\otimes U\rightarrow U\otimes U\otimes \mathbb{C}((x)),$$
 satisfying $$S^{12}(x)S^{13}(x+z)S^{23}(z)=S^{23}(z)S^{13}(x+z)S^{12}(x)$$
 (the {\em quantum Yang-Baxter equation}), where for $1\leq i<j\leq 3$,
 $$S^{ij}(x): U\otimes U\otimes U\rightarrow U\otimes U\otimes U\otimes \mathbb{C}((x))$$
 denotes the canonical extension of $S(x)$. It is said to be {\em unitary} if $S(x)S^{21}(-x)=1,$
 where $S^{21}(x)=\sigma S(x)\sigma$ with $\sigma$ denoting the flip operator on $U\otimes U$.

For a nonlocal vertex algebra $V$, following \cite{EK}, denote by $Y(x)$ the linear map
$$Y(x):\ V\otimes V\rightarrow V((x)),$$
associated to the linear map $Y(\cdot,x)$.
The following notion of quantum vertex algebra was introduced in \cite{Li-nonlocal} (cf. \cite{EK}):

\begin{definition}
{\em A {\em quantum vertex algebra} is a weak quantum vertex algebra $V$ equipped with a unitary rational quantum Yang-Baxter operator ${\mathcal{S}}(x)$ on $V$ such that for $u,v\in V$, (\ref{eSjacobi}) holds with
$\sum_{i=1}^{r}v^{(i)}\otimes u^{(i)}\otimes f_{i}(x)={\mathcal{S}}(x)(v\otimes u)$ and such that
\begin{eqnarray}
&&{\mathcal{S}}(x)({\bf 1}\otimes v)={\bf 1}\otimes v\ \ \ \mbox{ for }v\in
V,\label{esvacuum}\\
&&[\D\otimes 1, {\mathcal{S}}(x)]=-\frac{d}{dx}{\mathcal{S}}(x),\label{d1s}\\
&&{\mathcal{S}}(x_{1})(Y(x_{2})\otimes 1)=(Y(x_{2})\otimes
1){\mathcal{S}}^{23}(x_{1}){\mathcal{S}}^{13}(x_{1}+x_{2}).\label{sy1}
\end{eqnarray}}
\end{definition}

The following notion is due to Etingof and Kazhdan (see \cite{EK}):

\begin{definition}
{\em A nonlocal vertex algebra $V$ is said to be {\em non-degenerate} if for every positive integer $n$, the linear map
$$Z_n:\  \mathbb{C}((x_{1}))\cdots((x_{n}))\otimes V^{\otimes n}\rightarrow V((x_{1}))\cdots((x_{n}))$$ defined by
$$Z_n(f\otimes v^{(1)}\otimes\cdots\otimes v^{(n)})=fY(v^{(1)},x_{1})\cdots Y(v^{(n)},x_{n})\textbf{1}$$
is injective.}
\end{definition}

The following was proved in \cite{Li-nonlocal}:

\begin{prop}\label{pnondeg-qva}
Every non-degenerate weak quantum vertex algebra is a quantum vertex algebra
with a uniquely determined rational quantum Yang-Baxter operator.
\end{prop}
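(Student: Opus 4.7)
The plan is to use non-degeneracy twice: first to show that the datum $\sum_{i=1}^{r} v^{(i)}\otimes u^{(i)}\otimes f_{i}(x)\in V\otimes V\otimes\mathbb{C}((x))$ supplied by the $\mathcal{S}$-Jacobi identity is uniquely determined by the pair $u,v\in V$, so that it defines a bilinear map $\mathcal{S}(x):V\otimes V\to V\otimes V\otimes\mathbb{C}((x))$; and second to deduce the four axioms of a unitary rational quantum Yang-Baxter operator from reordering identities for products of two or three vertex operators applied to $\mathbf{1}$.

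For the uniqueness step, suppose both $\sum_i v^{(i)}\otimes u^{(i)}\otimes f_i(x)$ and $\sum_j \tilde v^{(j)}\otimes \tilde u^{(j)}\otimes \tilde f_j(x)$ realize the $\mathcal{S}$-Jacobi identity for fixed $u,v$. Subtracting, the first and third terms cancel and there remains
\[
x_{0}^{-1}\delta\!\left(\frac{x_{2}-x_{1}}{-x_{0}}\right)\sum_i \bigl(f_i(-x_{0})Y(v^{(i)},x_{2})Y(u^{(i)},x_{1})-\tilde f_{j}(-x_{0})Y(\tilde v^{(j)},x_{2})Y(\tilde u^{(j)},x_{1})\bigr)=0.
\]
Applying $\mathrm{Res}_{x_{0}}$, evaluating on $\mathbf{1}$, and relabeling $x_{1}\leftrightarrow x_{2}$ yields an identity of the form $Z_{2}(\xi)=Z_{2}(\tilde\xi)$, where $\xi,\tilde\xi\in\mathbb{C}((x_{1}))((x_{2}))\otimes V\otimes V$ encode the two candidates through the injective expansion $f(x)\mapsto\iota_{x_{1},x_{2}}(f(x_{1}-x_{2}))$. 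Injectivity of $Z_{2}$ then forces $\xi=\tilde\xi$, and pulling back along that injection identifies the two candidates as elements of $V\otimes V\otimes\mathbb{C}((x))$. Thus $\mathcal{S}(x)(v\otimes u):=\sum_{i}v^{(i)}\otimes u^{(i)}\otimes f_{i}(x)$ is well defined, and bilinearity in $u,v$ is automatic.

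Turning to the four axioms, the vacuum property \eqref{esvacuum} follows by inserting $u=\mathbf{1}$ into the $\mathcal{S}$-Jacobi identity, where the choice $\mathbf{1}\otimes v\otimes 1$ trivially works and uniqueness does the rest. For \eqref{d1s}, differentiate the $\mathcal{S}$-Jacobi identity in $x_{1}$ and compare with the identity for $(\mathcal{D}u,v)$ via $Y(\mathcal{D}u,x)=\tfrac{d}{dx}Y(u,x)$, then invoke uniqueness. For the QYBE and the hexagon-type axiom \eqref{sy1}, expand $Y(u,x_{1})Y(v,x_{2})Y(w,x_{3})\mathbf{1}$: reorder the three vertex operators in the two ways, first via $\mathcal{S}^{12}$ then $\mathcal{S}^{13}$ then $\mathcal{S}^{23}$ versus the reverse sequence, using the $\mathcal{S}$-locality consequence of the $\mathcal{S}$-Jacobi identity. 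Both reorderings produce expressions whose common form is $Z_{3}$ applied to elements of $\mathbb{C}((x_{1}))((x_{2}))((x_{3}))\otimes V^{\otimes 3}$, with coefficients built from products of $\mathcal{S}^{ij}$-matrix entries evaluated at $x_{i}-x_{j}$. Injectivity of $Z_{3}$ upgrades the resulting equality of $Z_{3}$-values to the QYBE in the variables $(x_{1}-x_{2},\,x_{1}-x_{3},\,x_{2}-x_{3})$. Unitarity follows analogously from reordering $u\leftrightarrow v$ and back, combined with $Z_{2}$-injectivity, and \eqref{sy1} is obtained by matching two applications of weak associativity together with $\mathcal{S}$-locality inside the three-operator product.

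The main obstacle is the formal-variable bookkeeping in the three-operator step: one must verify that, after multiplication by appropriate powers of $(x_{i}-x_{j})$, both reorderings of $Y(u,x_{1})Y(v,x_{2})Y(w,x_{3})\mathbf{1}$ land in a common space on which $Z_{3}$ is defined, and that the $\mathcal{S}^{ij}$-coefficients are arranged so that the matched equation is literally the QYBE of the definition and not some delta-function-decorated variant. Once this bookkeeping is controlled, non-degeneracy converts each operator identity acting on $\mathbf{1}$ into the corresponding algebraic identity on $V^{\otimes n}\otimes\mathbb{C}((x))$, so the existence of the $\mathcal{S}$-datum (already built into the $\mathcal{S}$-Jacobi axiom) gets upgraded by non-degeneracy to a genuine, uniquely determined unitary quantum Yang-Baxter operator.
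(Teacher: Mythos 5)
This proposition is quoted in the paper without proof (it is cited from \cite{Li-nonlocal}), and your outline reconstructs essentially the argument given there: non-degeneracy of $Z_2$ forces uniqueness of the $\mathcal{S}$-datum, hence a well-defined map $\mathcal{S}(x)$, and the remaining axioms (vacuum property, $\mathcal{D}$-bracket, unitarity, hexagon, QYBE) all follow by producing two candidate $\mathcal{S}$-data for a suitable pair of vectors, or two reorderings of a product of two or three vertex operators applied to $\mathbf{1}$, and invoking injectivity of $Z_2$ or $Z_3$. Two slot-bookkeeping slips are worth fixing: since the convention is $\mathcal{S}(x)(v\otimes u)$ for the pair $(u,v)$ in the $\mathcal{S}$-Jacobi identity, the vacuum axiom (\ref{esvacuum}) comes from taking $v=\mathbf{1}$ (not $u=\mathbf{1}$, which instead gives $\mathcal{S}(x)(v\otimes\mathbf{1})=v\otimes\mathbf{1}$), and the axiom (\ref{d1s}), in which $\D\otimes 1$ acts on the first tensor factor, is obtained by comparing with the pair $(u,\D v)$ via differentiation in $x_2$, not with $(\D u,v)$ via $x_1$. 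These are local corrections, not conceptual gaps; the three-operator bookkeeping you flag as the main obstacle is indeed where the real work lies, and it goes through as in the cited reference.
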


\begin{rmk}
{\em  In view of Proposition \ref{pnondeg-qva}, the term ``non-degenerate quantum vertex algebra''
without specifying a quantum Yang-Baxter operator is unambiguous.
It was proved  in \cite{Li-nonlocal} that if $V$ is of countable dimension (over $\C$)
and if $V$ as a (left) $V$-module is irreducible, then $V$ is non-degenerate.
Then the term ``irreducible quantum vertex algebra''
without specifying a quantum Yang-Baxter operator is unambiguous.}
\end{rmk}

Let $W$ be a general vector space. Set
\begin{eqnarray*}
\mathcal{E}(W)=\mbox{Hom}(W , W((x)))\subset(\mbox{EndW})[[x,x^{-1}]].
\end{eqnarray*}
The identity operator on $W$, denoted by $\textbf{1}_{W}$, is a special
element of $\mathcal{E}(W).$

\begin{definition} {\em A subset $U$ of $\mathcal{E}(W)$ is said to be {\em $S$-local}
if for any $a(x), b(x)\in U$, there exist
$$a_{i}(x), b_{i}(x)\in U, \ f_{i}(x)\in \mathbb{C}((x)), \ i=1,\ldots,r,$$
and a nonnegative integer $k$ such that
\begin{eqnarray}\label{eS-locality-relation}
(x_1-x_2)^{k}a(x_{1})b(x_{2})=(x_1-x_2)^{k}\sum_{i=1}^{r}f_{i}(x_{2}-x_{1})b_{i}(x_2)a_{i}(x_1).
\end{eqnarray}}
\end{definition}

Let $W$ be a vector space as before and let $U$ be any $S$-local subset of $\mathcal{E}(W)$.
 Assume $a(x),b(x)\in U$.
Notice that the relation (\ref{eS-locality-relation}) implies
 \begin{eqnarray}\label{ecompat}
 (x_{1}-x_2)^{k}a(x_{1})b(x_{2})\in \mbox{Hom}(W, W((x_{1},x_{2}))).
 \end{eqnarray}
 Define $a(x)_{n}b(x)\in\mbox{(End W)}[[x,x^{-1}]]$ for $n\in \Z$
                  in terms of generating function
                 \begin{eqnarray}Y_{\mathcal{E}}(a(x),z)b(x)=
                  \sum_{n\in\mathbb{Z}}(a(x)_{n}b(x))z^{-n-1}   \label{eq:2.4}\end{eqnarray}
                   by
 \begin{eqnarray}
  Y_{\mathcal{E}}(a(x),z)b(x)=\mbox{Res}_{x_{1}}x^{-1}\delta\left(\frac{x_{1}-z}{x}\right)x_{0}^{-k}((x_1-x)^{k}a(x_1)b(x)),
 \end{eqnarray}
               where $k$ is any nonnegative integer such that (\ref{ecompat}) holds.
               Assuming the $S$- locality relation (\ref{eS-locality-relation}), we have
                 \begin{eqnarray}
                 Y_{\mathcal{E}}(a(x),z)b(x)
                 &=&\mbox{Res}_{x_{1}}z^{-1}\delta\left(\frac{x_{1}-x}{z}\right)a(x_1)b(x)\nonumber\\
                 &&-\mbox{Res}_{x_{1}}z^{-1}\delta\left(\frac{x-x_{1}}{-z}\right)\sum_{i=1}^{r}\iota_{x,x_{1}}(f_{i}(x-x_1))b^{(i)}(x)a^{(i)}(x_1), \label{eq:2.5}
                 \end{eqnarray}
                 or equivalently, for $n\in\Z$,
                 $$a(x)_{n}b(x)=\mbox{Res}_{x_{1}}\left((x_1-x)^{n}a(x_1)b(x)
                 -\sum^{r}_{i=1}(-x+x_{1})^{n}f_{i}(x-x_1)b^{(i)}(x)a^{(i)}(x_1)\right).$$

Let $U$ be an $S$-local subspace of $\mathcal{E}(W)$. We say
 $U$ is {\em $Y_{\mathcal{E}}$-closed} if
           \begin{eqnarray*}
                 a(x)_{n}b(x)\in U
                  \    \    \mbox{ for all }a(x),b(x)\in U,\ n\in\mathbb{Z}.
            \end{eqnarray*}

The following result was obtained in \cite{Li-const}:

\begin{thm}\label{thm2.1}
Let $W$ be a vector space and let $U$ be any $S$-local subset of $\mathcal{E}(W)$.
Then there exists a $Y_{\mathcal{E}}$-closed
$S$-local subspace of $\mathcal{E}(W)$, which contains $U$ and $\textbf{1}_{W}$.
Denote by $\langle U\rangle$ the smallest such subspace.
Then $(\langle U\rangle,Y_{\mathcal{E}},\textbf{1}_{W})$
carries the structure of a weak quantum vertex algebra and  $W$ is a faithful
  $\langle U\rangle$-module with $Y_{W}(a(x),z) = a(z)$ for $a(x)\in\langle U\rangle.$
\end{thm}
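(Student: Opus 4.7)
The plan is to follow the general method of constructing weak quantum vertex algebras from $S$-local fields developed in \cite{Li-nonlocal, Li-const}, which extends the classical Meurman-Primc / Frenkel-Kac-Radul-Wang construction of vertex algebras from mutually local fields to the $S$-local setting. The proof breaks into three pieces: the vacuum axioms, closure under the $Y_{\mathcal{E}}$-operation, and the $\mathcal{S}$-Jacobi identity.

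First I would observe that $\textbf{1}_{W}$ is trivially $S$-local with every $a(x) \in \mathcal{E}(W)$ (take $r=1$, $f_1(x)=1$, $k=0$ in (\ref{eS-locality-relation})), so $U \cup \{\textbf{1}_{W}\}$ remains $S$-local. A direct residue computation using (\ref{eq:2.5}) yields $Y_{\mathcal{E}}(\textbf{1}_{W}, z)a(x) = a(x)$ and $Y_{\mathcal{E}}(a(x), z)\textbf{1}_{W} = a(x+z) \in \mathcal{E}(W)[[z]]$ with limit $a(x)$ as $z \to 0$, giving the vacuum properties. Moreover, that $a(x)_{n} b(x) \in \mathcal{E}(W)$ follows because the two residue terms in (\ref{eq:2.5}) glue on their overlap after multiplication by $(x_1-x)^{k}$, thanks to the $S$-locality relation (\ref{eS-locality-relation}).

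The main technical obstacle is a generalized Dong lemma: if $a(x), b(x), c(x) \in \mathcal{E}(W)$ are pairwise $S$-local, then $a(x)_{n} b(x)$ is again $S$-local with $c(x)$ for every $n \in \Z$. The proof combines weak associativity of the residue product with the three pairwise $S$-locality relations, producing an explicit $S$-locality relation for the composite whose structure constants are built by contracting the original $f_{i}$'s. Granted this, $\langle U \rangle$ arises as the ascending union of the subspaces obtained by iteratively closing $U \cup \{\textbf{1}_{W}\}$ under $(a(x), b(x)) \mapsto a(x)_{n} b(x)$. The $\mathcal{S}$-Jacobi identity on $\langle U \rangle$ then follows from the classical strategy: multiply through by a suitable power of $(x_1-x_2)$ to kill the poles, use weak associativity of the residue product to match the associativity side, and apply the delta-function identity
\begin{equation*}
x_{0}^{-1}\delta\!\left(\frac{x_{1}-x_{2}}{x_{0}}\right) - x_{0}^{-1}\delta\!\left(\frac{x_{2}-x_{1}}{-x_{0}}\right) = x_{2}^{-1}\delta\!\left(\frac{x_{1}-x_{0}}{x_{2}}\right),
\end{equation*}
identifying $\iota_{x, x_{1}}(f_{i}(x - x_{1}))$ appearing in (\ref{eq:2.5}) with $f_{i}(-x_{0})$ under the induced substitution. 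Finally, $W$ becomes a faithful $\langle U\rangle$-module via the tautological action $Y_{W}(a(x), z) = a(z)$: weak associativity on $W$ is built into (\ref{eq:2.5}) applied pointwise to vectors of $W$, and faithfulness is immediate since distinct elements of $\mathcal{E}(W)$ act distinctly on $W$ by definition.
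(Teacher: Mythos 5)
The paper gives no proof of Theorem \ref{thm2.1}: it is quoted from \cite{Li-const} (cf.\ \cite{Li-nonlocal}, Theorem 5.8), and your outline reproduces exactly the strategy of those references --- trivial $S$-locality of $\textbf{1}_{W}$ and the vacuum computations, the $S$-local generalization of Dong's lemma as the closure step, the $\mathcal{S}$-Jacobi identity obtained from weak associativity together with $S$-locality, and the tautological faithful action on $W$. The architecture is correct; the one piece of real content you defer, the generalized Dong lemma, is precisely where the cited sources do their work, so nothing here is wrong, but the sketch is only as complete as that deferral.
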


Next, we recall from \cite{Li-phi} and \cite{Li6} some basic results in the theory of $\phi$-coordinated modules for weak quantum vertex algebras. In this theory, $\phi$ stands for the formal series $\phi(x,z)=xe^{z}\in \mathbb{C}[[x,z]]$, which is what was called therein an associate of the $1$-dimensional additive formal group (law) $F(x,y)=x+y$.

\begin{definition}
{\em Let $V$ be a weak quantum vertex algebra. A {\em $\phi$-coordinated $V$-module} is a vector space
                  $W$ equipped with a linear map
                  $$Y_{W}(\cdot,x) :\  V\longrightarrow \mathrm{Hom}(W,W((x)))\subset (\mathrm{EndW})[[x,x^{-1}]],$$
              satisfying the conditions that $Y_{W}({\bf 1},x) = 1_{W}$
                    and that for any $u,v\in V$, there exists a nonnegative integer $k$ such that
                    \begin{eqnarray*}
              &&(x_1-x_2)^{k}Y_{W}(u,x_{1})Y_{W}(v,x_{2})\in \mathrm{Hom}(W,W((x_{1},x_{2})))
                         \end{eqnarray*}
                         and
                    \begin{eqnarray*}
              &&(x_{2}e^{z}-x_2)^{k}Y_{W}(Y(u,z)v,x_{2})=((x_1-x_2)^{k}Y_{W}(u,x_{1})Y_{W}(v,x_{2})|_{x_1=x_{2}e^{z}}.
                         \end{eqnarray*}}
\end{definition}

Let $\mathbb{C}(x)$ and $\C(x_1,x_2)$ denote the fields of rational functions. Define
\begin{eqnarray}
\iota_{x_{1},x_{2}}: \  \C(x_{1},x_{2})\rightarrow \C((x_{1}))((x_{2}))
\end{eqnarray}
to be the canonical extension of the ring embedding of $\C[x_{1},x_{2}]$ into the field $\C((x_{1}))((x_{2}))$.

The following result was obtained in \cite{Li-phi}:

\begin{prop}
Let $V$ be a weak quantum vertex algebra and let $(W,Y_{W})$ be a $\phi$-coordinated $V$-module.
Let $u,v\in V$ and suppose that $u^{(i)},\ v^{(i)}\in V,\ q_i(x)\in \C(x)\ (1\le i\le r)$ such that
\begin{eqnarray}
(x_1-x_2)^{k}Y(u,x_1)Y(v,x_2)=(x_1-x_2)^{k}\sum_{i=1}^{r}\iota_{x_2,x_1}(q_i(e^{x_2-x_1}))Y(v^{(i)},x_2)Y(u^{(i)},x_1)
\end{eqnarray}
on $V$ for some nonnegative integer $k$. Then
\begin{eqnarray}
&&Y_{W}(u,x_1)Y_W(v,x_2)-\sum_{i=1}^{r}\iota_{x_2,x_1}(q_{i}(x_2/x_1))Y_{W}(v^{(i)},x_2)Y_{W}(u^{(i)},x_1)
\nonumber\\
&=&\sum_{n\ge 0}\frac{1}{n!}Y_{W}(u_nv,x_2)\left(x_2\frac{\partial}{\partial x_2}\right)^{n}\delta\left(\frac{x_2}{x_1}\right).
\end{eqnarray}
\end{prop}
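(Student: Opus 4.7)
The plan is to use weak $\phi$-associativity to transport the $\mathcal{S}$-locality on $V$ (involving the factor $q_i(e^{x_2-x_1})$) to a $\phi$-coordinated $\mathcal{S}$-locality on $W$ (involving the factor $q_i(x_2/x_1)$), and then to extract the delta-function commutator formula via the standard characterization of formal distributions annihilated by $(x_1-x_2)^k$. The substitution $x_1=x_2 e^z=\phi(x_2,z)$ is the bridge: under it, $x_2/x_1=e^{-z}$, so the multiplicative factor on $W$ corresponds to the additive factor on $V$ in the relevant sense.

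First, by weak $\phi$-associativity (enlarging $k$ if necessary), both $(x_1-x_2)^k Y_W(u,x_1)Y_W(v,x_2)$ and $(x_1-x_2)^k Y_W(v^{(i)},x_2)Y_W(u^{(i)},x_1)$ lie in $\Hom(W,W((x_1,x_2)))$, with
$$\left.\left((x_1-x_2)^k Y_W(u,x_1)Y_W(v,x_2)\right)\right|_{x_1=x_2 e^z}=(x_2(e^z-1))^k Y_W(Y(u,z)v,x_2),$$
and an analogous substitution formula for each reversed pair. Using this together with the identity $\iota_{x_2,x_1}(q_i(x_2/x_1))|_{x_1=x_2 e^z}=q_i(e^{-z})$ and the injectivity of the substitution $x_1\mapsto x_2 e^z$ on $\Hom(W,W((x_1,x_2)))$, the ``$\phi$-coordinated $\mathcal{S}$-locality''
$$(x_1-x_2)^k Y_W(u,x_1)Y_W(v,x_2)=(x_1-x_2)^k\sum_{i=1}^r\iota_{x_2,x_1}(q_i(x_2/x_1))Y_W(v^{(i)},x_2)Y_W(u^{(i)},x_1)$$
reduces to an identity on $V$ that follows from the hypothesis (which encodes the $\mathcal{S}$-Jacobi identity on $V$ with $f_i(x_0)=q_i(e^{-x_0})$).

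Second, from the $\phi$-coordinated $\mathcal{S}$-locality, the difference
$$D(x_1,x_2):=Y_W(u,x_1)Y_W(v,x_2)-\sum_{i=1}^r\iota_{x_2,x_1}(q_i(x_2/x_1))Y_W(v^{(i)},x_2)Y_W(u^{(i)},x_1)$$
satisfies $(x_1-x_2)^k D=0$, so $D=\sum_{n\ge 0}A_n(x_2)\frac{1}{n!}(x_2\partial_{x_2})^n\delta(x_2/x_1)$ for unique operator-valued coefficients $A_n(x_2)$, by the standard characterization of formal distributions supported on the diagonal. To pin down the $A_n$, I extract them via residue tests against test functions in $x_1$; using the expansion $Y(u,z)v=\sum_n(u_nv)z^{-n-1}$ and matching via weak $\phi$-associativity (the substituted form of $(x_1-x_2)^k Y_W(u,x_1)Y_W(v,x_2)$ is $(x_2(e^z-1))^k Y_W(Y(u,z)v,x_2)$), one identifies $A_n=Y_W(u_nv,x_2)$ by comparing coefficient by coefficient in $z$.

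The main obstacle is establishing the $\phi$-coordinated $\mathcal{S}$-locality: one must carefully translate between the expansion regime on $V$ (in terms of $e^{x_2-x_1}$, viewed as a formal series in $x_2-x_1$) and on $W$ (in terms of $x_2/x_1$), using weak $\phi$-associativity for both orderings of the operator products. Once this locality is in place, the delta-function commutator formula follows from standard formal-calculus identities for $\delta(x_2/x_1)$ and its $(x_2\partial_{x_2})$-derivatives.
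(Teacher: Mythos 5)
First, a point of reference: the paper does not prove this proposition at all --- it is quoted from \cite{Li-phi} (it is essentially Proposition 5.9 there), so there is no in-paper argument to compare against. Your outline does follow the same broad strategy as the proof in that reference: pass from the additive $\mathcal{S}$-locality on $V$ to a multiplicative one on $W$ via the substitution $x_1=x_2e^{z}$, then expand the difference of the two ordered products as a sum of delta functions supported on $x_1=x_2$ and identify the coefficients through weak $\phi$-associativity. The skeleton is right.

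As a proof, however, the sketch has a genuine gap at exactly the step you flag as ``the main obstacle,'' and the mechanism you propose for it does not work as stated. The substitution $x_1=x_2e^{z}$ is not defined on $\iota_{x_2,x_1}(q_i(x_2/x_1))$: this series has the form $\sum_{n\ge N}a_n x_1^{n}x_2^{-n}$, every term of which has total degree $0$, so termwise substitution produces the divergent expression $\sum_{n\ge N}a_ne^{nz}$. Hence the ``identity'' $\iota_{x_2,x_1}(q_i(x_2/x_1))|_{x_1=x_2e^{z}}=q_i(e^{-z})$ is not meaningful, and injectivity of the substitution cannot be invoked on the right-hand side of your $\phi$-coordinated $\mathcal{S}$-locality. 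Multiplying by $(x_1-x_2)^{k}$ does not repair this: writing $q_i=P_i/Q_i$, the poles of $q_i(x_2/x_1)$ lie along the lines $x_2=cx_1$ for roots $c$ of $Q_i$, which powers of $(x_1-x_2)$ clear only when $c=1$. One must first multiply by the homogenizations $x_1^{\deg Q_i}Q_i(x_2/x_1)$ so as to land in $\Hom(W,W((x_1,x_2)))$, substitute there, and verify that these factors become invertible in $\C((x_2))((z))$ before cancelling them. A second, related gap: weak $\phi$-associativity applied to the reversed pair naturally substitutes $x_2=x_1e^{z}$ and yields the iterate $Y_W(Y(v^{(i)},z)u^{(i)},x_1)$, not an evaluation at $x_1=x_2e^{z}$ involving $Y(u,z)v$; converting one into the other requires skew-symmetry (or the $\mathcal{S}$-Jacobi identity) on $V$ together with the $\D$-translation property of $\phi$-coordinated modules, none of which appears in your reduction ``to an identity on $V$ that follows from the hypothesis.'' These are precisely the points where the work lies in \cite{Li-phi}. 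By contrast, your final step is fine to defer: once the locality is in place, the decomposition of a kernel of $(x_1-x_2)^{k}$ into $(x_2\partial_{x_2})^{n}\delta(x_2/x_1)$ terms and the identification $A_n=Y_W(u_nv,x_2)$ via $A(x_2e^{z},x_2)=x_2^{k}(e^{z}-1)^{k}Y_W(Y(u,z)v,x_2)$ is a routine (if not one-line) computation.
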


\begin{definition}
{\em Let $W$ be a vector space.  A subset $U$ of $\mathcal{E}(W)$ is said to be $S_{trig}$-local
if for any $a(x), b(x)\in U$, there exist
$$u_{i}(x), v_{i}(x)\in U, \ q_{i}(x)\in \mathbb{C}(x), \ i=1,\ldots,r,$$
 such that
\begin{eqnarray}
(x_1-x_2)^{k}a(x_{1})b(x_{2})=(x_1-x_2)^{k}\sum_{i=1}^{r}\iota_{x_2,x_1}(q_{i}(x_{1}/x_{2}))u_{i}(x_2)v_{i}(x_1)\label{eq:2.2}
\end{eqnarray}
for some nonnegative integer $k$. }
\end{definition}

Let $W$ be a vector space as before. Let $U$ be any $S_{trig}$-local subset of $\mathcal{E}(W)$ and let $a(x),b(x)\in U$.
Notice that the relation (\ref{eq:2.2}) implies
 \begin{eqnarray}
 (x_{1}-x_2)^{k}a(x_{1})b(x_{2})\in \mbox{Hom}(W, W((x_{1},x_{2}))).     \label{eq:2.3}
 \end{eqnarray}
 Define $a(x)_{n}^{e}b(x)\in\mbox{(End W)}[[x,x^{-1}]]$ for $n\in \Z$
                  in terms of generating function
                 \begin{eqnarray}Y_{\mathcal{E}}^{e}(a(x),z)b(x)=
                  \sum_{n\in\mathbb{Z}}(a(x)_{n}^{e}b(x))z^{-n-1}   \label{eq:2.4}\end{eqnarray}
                  by
                 \begin{eqnarray}Y_{\mathcal{E}}^{e}(a(x),z)b(x)=
                  x^{-k}(e^{z}-1)^{-k}((x_1-x)^{k}a(x_{1})b(x))|_{x_{1}=xe^{z}},  \label{eq:2.5}
                  \end{eqnarray}
                 where $k$ is any nonnegative integer such that (\ref{eq:2.3}) holds and where
                 $(e^z-1)^{-k}$ stands for the inverse of $(e^z-1)^{k}$ in $\C((z))$.

Let $U$ be an $S_{trig}$-local subspace of $\mathcal{E}(W)$. We say
 $U$ is {\em $Y_{\mathcal{E}}^{e}$-closed} if
           \begin{eqnarray*}
                 a(x)_{n}^{e}b(x)\in U
                  \    \    \mbox{ for all }a(x),b(x)\in U,\ n\in\mathbb{Z}.
            \end{eqnarray*}

The following result was obtained in \cite{Li-phi} (Theorem 5.4):

\begin{thm}\label{thm2.2}
Let $W$ be a vector space and let $U$ be any $S_{trig}$-local subset of $\mathcal{E}(W)$.
Then there exists a $Y_{\mathcal{E}}^{e}$-closed
$S_{trig}$-local subspace of $\mathcal{E}(W)$, which contains $U$ and $\textbf{1}_{W}$.
Denote by $\langle U\rangle_{e}$ the smallest such subspace. Then
$(\langle U\rangle_{e},Y_{\mathcal{E}}^{e},\textbf{1}_{W})$ carries the structure of a weak quantum vertex algebra
and  $W$ is a $\phi$-coordinated $\langle U\rangle_{e}$-module with
 $Y_{W}(a(x),z) = a(z)$  for $a(x)\in\langle U\rangle_{e}.$
\end{thm}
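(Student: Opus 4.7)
The plan is to adapt the proof of Theorem~\ref{thm2.1} (the $S$-local construction of \cite{Li-const}) to the $S_{trig}$-local setting, replacing the additive product $Y_{\mathcal{E}}$ throughout by the trigonometric product $Y_{\mathcal{E}}^{e}$ built from the substitution $x_{1}=xe^{z}$. The argument splits into three main steps: existence of $\langle U\rangle_{e}$ together with well-definedness of the product, the $\mathcal{S}$-Jacobi identity on $\langle U\rangle_{e}$, and the $\phi$-coordinated module axioms on $W$.

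First I would establish that the intersection of any family of $Y_{\mathcal{E}}^{e}$-closed $S_{trig}$-local subspaces of $\mathcal{E}(W)$ all containing $U\cup\{\mathbf{1}_{W}\}$ is again such a subspace; equivalently, $\langle U\rangle_{e}$ may be built from below by iteratively adjoining the linear spans of products $a(x)_{n}^{e}b(x)$. The key lemma is stability of $S_{trig}$-locality under this formation: if $a,b,c$ are pairwise $S_{trig}$-local, then $a(x)_{n}^{e}b(x)$ is $S_{trig}$-local with $c(x)$, with the new rational functions obtained by substitution into and composition of the given ones. This mimics the additive case step by step, the only change being that the substitution $x_{1}\mapsto x_{2}+z$ is replaced by $x_{1}\mapsto x_{2}e^{z}$, sending rational functions of $x_{1}/x_{2}$ into $\mathbb{C}(x_{2})((z))$. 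Well-definedness of $Y_{\mathcal{E}}^{e}(a(x),z)b(x)$ is then immediate: since $(x_{1}-x)^{k}a(x_{1})b(x)\in\Hom(W,W((x_{1},x)))$, specializing $x_{1}=xe^{z}$ yields an element of $\Hom(W,W((x))[[z]])$ that carries a factor $x^{k}(e^{z}-1)^{k}$ cancelling the prefactor $x^{-k}(e^{z}-1)^{-k}$, and the vacuum properties $Y_{\mathcal{E}}^{e}(\mathbf{1}_{W},z)a(x)=a(x)$ and $\lim_{z\to 0}Y_{\mathcal{E}}^{e}(a(x),z)\mathbf{1}_{W}=a(x)$ follow directly from the defining formula with $k=0$.

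The main obstacle is the $\mathcal{S}$-Jacobi identity on $\langle U\rangle_{e}$. For $a(x),b(x),c(x)\in\langle U\rangle_{e}$ and appropriate nonnegative $k,l,m$, the triple product
\begin{equation*}
(x_{4}-x_{5})^{k}(x_{4}-x_{6})^{l}(x_{5}-x_{6})^{m}a(x_{4})b(x_{5})c(x_{6})
\end{equation*}
lies in $\Hom(W,W((x_{4},x_{5},x_{6})))$. Specializing $x_{4}=x_{6}e^{z_{1}}$ and $x_{5}=x_{6}e^{z_{2}}$ recovers $Y_{\mathcal{E}}^{e}(a,z_{1})Y_{\mathcal{E}}^{e}(b,z_{2})c$, and after using the $S_{trig}$-locality of $a$ and $b$ to reverse their order also recovers the reversed product multiplied by the $\mathcal{S}$-data $\sum_{i}\iota_{x_{5},x_{4}}(q_{i}(x_{4}/x_{5}))$ evaluated at the same substitution; the key identity
\begin{equation*}
\iota_{x_{2},x_{1}}(q(x_{1}/x_{2}))\bigl|_{x_{1}=x_{2}e^{z}}=\iota_{z,0}(q(e^{z}))
\end{equation*}
ensures that this is precisely the $\mathcal{S}$-matrix data in the variable $z_{1}-z_{2}$. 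The iterate $Y_{\mathcal{E}}^{e}(Y_{\mathcal{E}}^{e}(a,z_{0})b,z_{2})c$ is extracted by setting $z_{1}=z_{2}+z_{0}$ and using $x_{6}e^{z_{2}+z_{0}}=(x_{6}e^{z_{2}})e^{z_{0}}$, so weak $\phi$-associativity for $\langle U\rangle_{e}$ reduces to the above triple-product identity. Combining these pieces yields the $\mathcal{S}$-Jacobi identity exactly as in the $S$-local case; the delicate bookkeeping lies in confirming that all substitutions remain legitimate in the appropriate formal rings and that the rational data align.

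Finally, $W$ becomes a $\phi$-coordinated $\langle U\rangle_{e}$-module with $Y_{W}(a(x),z)=a(z)$ essentially by construction: the condition $Y_{W}(\mathbf{1}_{W},z)=1_{W}$ is trivial, and rearranging the definition of $Y_{\mathcal{E}}^{e}$ gives
\begin{equation*}
(xe^{z}-x)^{k}Y_{\mathcal{E}}^{e}(a(x),z)b(x)=\bigl((x_{1}-x)^{k}a(x_{1})b(x)\bigr)\bigl|_{x_{1}=xe^{z}},
\end{equation*}
which, under the identification $Y_{W}(a(x),x)=a(x)$, is exactly the weak $\phi$-associativity axiom for $\phi(x,z)=xe^{z}$.
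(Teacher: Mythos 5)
The paper does not prove this theorem; it is quoted verbatim from \cite{Li-phi} (Theorem 5.4), so there is no in-text argument to compare against. Your outline correctly reproduces the strategy of the cited proof --- build $\langle U\rangle_{e}$ by iterated closure using stability of $S_{trig}$-locality under the products $a(x)^{e}_{n}b(x)$, derive weak associativity and the $\mathcal{S}$-Jacobi identity from triple products under the multiplicative substitutions $x_{1}=xe^{z_{1}}$, $x_{2}=xe^{z_{2}}$ together with the identity $\iota_{x_{2},x_{1}}(q(x_{1}/x_{2}))|_{x_{1}=x_{2}e^{z}}=\iota_{z,0}(q(e^{z}))$, and read off the $\phi$-coordinated module axioms on $W$ from the defining formula for $Y_{\mathcal{E}}^{e}$ --- so it is an accurate, if high-level, account of essentially the same approach.
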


\section{Algebra $\A(h)$ and weak quantum vertex algebra $V_{\A(h)}$}

In this section, we first introduce an associative algebra $\A(h)$ associated to a formal power series $h(z)$ satisfying a certain condition, and then associate a weak quantum vertex algebra $V_{\A(h)}$ to this associative algebra and
establish an isomorphism between the category of suitably defined restricted $\A(h)$-modules and that of
$V_{\A(h)}$-modules.

Let $h(z)\in {\mathbb{C}}[[z]]$ such that $h(z)h(-z)=1$, which is fixed throughout this section.
Notice that we have $h(0)^{2}=1$. That is, $h(0)=\pm 1$.

\begin{definition}\label{def}
{\em Define $\A(h)$ to be the associative algebra
with identity over $\mathbb{C}$ with generators
$$e_{n},\   f_{n},\    \psi_{n}\   \   (n\in \mathbb{Z}),$$
 subject to relations
\begin{eqnarray}
&&e(z)e(w)=h(w-z)e(w)e(z), \    \   \    f(z)f(w)=h(z-w)f(w)f(z),\label{eqee}\\
&&\psi(z)e(w)=h(w-z)e(w)\psi(z), \    \   \psi(z)f(w)=h(z-w)f(w)\psi(z),\label{eqpsie}\\
&&[e(z),f(w)]=z^{-1}\delta\left(\frac{w}{z}\right)\psi(z)\label{eqef}\\
&&[\psi(z),\psi(w)]=0,\label{eqpsipsi}
\end{eqnarray}
where $a(x)=\sum_{n\in \mathbb{Z}}a_{n}x^{-n-1}$ for $a=e,f,\psi$.} 
\end{definition}

\begin{rmk}\label{inverse-relation}
{\em Note that since $h(0)=\pm 1$, $h(z-w)$ is an invertible element of $\C[[z,w]]$. Then
the relations (\ref{eqpsie}) are equivalent to
\begin{eqnarray}
e(w)\psi(z)=h(w-z)^{-1}\psi(z)e(w), \    \   \  f(w)\psi(z)=h(z-w)^{-1}\psi(z)f(w).\label{eq:3.2-new}
\end{eqnarray}}
\end{rmk}

\begin{rmk}\label{topological}
{\em Note that the commutation relations in the definition  involve infinite sums. For example,  we have
 \begin{eqnarray}
 e_{m}e_{n}=\sum_{r,i\ge 0}h_{r}\binom{r}{i}(-1)^{i}e_{n+r-i}e_{m+i}
 \end{eqnarray}
for $m,n\in \Z$, where
$$h(z)=\sum_{r\ge 0}h_{r}z^{r}.$$
In view of this, $\A(h)$ is a topological algebra in nature.}
\end{rmk}

Let $U$ denote a three-dimensional vector space with a designated basis $\{ e,f,\psi\}$. For convenience, set
$${\mathcal{B}}=\{ e,f,\psi\}\subset U.$$
Using the fact that
$$\left(\frac{\partial}{\partial z}+\frac{\partial}{\partial w}\right)h(z-w)=0,$$
by a straightforward argument we have:

\begin{lem}
 The algebra $\A(h)$ admits a derivation $d$ such that
 \begin{eqnarray}
 d(a_{n})=-na_{n-1}\   \   \   \mbox{ for }a\in U,\  n\in \Z,
 \end{eqnarray}
which  amounts to that $d(a(x))=\frac{d}{dx}a(x)$ for $a\in U$.
 \end{lem}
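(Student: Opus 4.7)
The plan is to first define $d$ as a derivation on the free associative algebra $F$ on the generators $\{a_n : a \in \B,\ n \in \Z\}$, where it is uniquely determined by $d(a_n) = -na_{n-1}$, and then to show that $d$ descends to the quotient $\A(h) = F/I$, where $I$ is the two-sided ideal generated by (the coefficients of) the defining relations (\ref{eqee})--(\ref{eqpsipsi}). This reduces to verifying $d(I) \subseteq I$, and for that it suffices to check that $d$ carries each generator of $I$ into $I$.

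The key observation I will use is that for any formal series $R(z,w) \in F[[z^{\pm 1},w^{\pm 1}]]$, applying $d$ to each coefficient is the same operation as applying $\partial_z + \partial_w$ term-by-term; this is immediate from $d(a_n) = -na_{n-1}$ together with $\partial_x x^{-n-1} = -(n+1)x^{-n-2}$. Consequently, to show that $d(R)$ lies in $I$ for each defining relation $R(z,w) = 0$, it is enough to show that $(\partial_z + \partial_w) R(z,w) = 0$ in $\A(h)$. By Leibniz, this in turn reduces to checking that $\partial_z + \partial_w$ annihilates every scalar factor appearing in the relation, since on the operator-valued factors $a(z)$, $b(w)$ it simply differentiates the corresponding field.

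For (\ref{eqee}) and (\ref{eqpsie}) the required vanishing is exactly the stated identity $(\partial_z+\partial_w)h(z-w)=0$ (and the same for $h(w-z)$, since it too depends only on the difference); for (\ref{eqef}) the required identity $(\partial_z+\partial_w)[z^{-1}\delta(w/z)]=0$ is immediate by inspection; and (\ref{eqpsipsi}) is trivial. The final assertion $d(a(x)) = \frac{d}{dx}a(x)$ is then a direct rewriting of $d(a_n) = -na_{n-1}$ against the expansion $a(x) = \sum_n a_n x^{-n-1}$. The main point to watch out for is the topological nature of $\A(h)$ noted in Remark \ref{topological} --- that commutation relations such as $e_m e_n = \sum_{r,i\ge 0} h_r\binom{r}{i}(-1)^i e_{n+r-i}e_{m+i}$ involve genuinely infinite sums --- but since the entire verification is carried out at the generating-series level, where each fixed monomial in $z, w$ receives contributions from only finitely many terms on each side, no actual issue arises.
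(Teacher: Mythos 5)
Your proposal is correct and is precisely the ``straightforward argument'' the paper alludes to: the paper offers no proof beyond citing the identity $\left(\frac{\partial}{\partial z}+\frac{\partial}{\partial w}\right)h(z-w)=0$, and your writeup (defining $d$ on the free algebra, identifying its coefficientwise action with $\partial_z+\partial_w$ on generating series, and checking that $\partial_z+\partial_w$ kills the scalar factors $h(\pm(z-w))$ and $z^{-1}\delta(w/z)$) is a faithful expansion of exactly that hint.
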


 \begin{definition}
{\em We define a  {\em restricted} $\A(h)$-module to be an $\A(h)$-module such that for any $a\in U,\ w\in W$,
$a_{n}w=0$ for $n$ sufficiently large, and  $W$ with  the discrete topology is a continuous module.
A nonzero vector $v$ in an $\A(h)$-module is called a {\em vacuum vector} if $a_{n}v=0$ for $a\in U, \ n\geq0$.
Furthermore, a {\em vacuum  $\A(h)$-module} is an $\A(h)$-module $W$ together with a vacuum vector
which generates $W$ as an $\A(h)$-module.}
\end{definition}

We have the following facts about a general vacuum  $\A(h)$-module:

\begin{lem}\label{span1}
Let $W$ be a vacuum $\A(h)$-module with vacuum vector $v$.
Set $F_{0}=\mathbb{C}v$ and $F_{k}=0$ for $k<0$.
For any positive integer $k$, define $F_{k}$ to be the linear span of vectors
$$a_{1}(-m_{1})\cdots a_{r}(-m_{r})v$$
for $r\geq1, a_{1},\dots, a_{r}\in \B,\  m_{1},\dots,m_{r}\geq 1$ with $m_{1}+\cdots+m_{r}\leq k.$
Then the subspaces $F_{k}$ for $k\in \mathbb{Z}$ form an ascending  filtration of $W$.
Furthermore, we have
\begin{eqnarray}\label{eq:3.1}
a(m)F_{k}\subseteq F_{k-m}\   \   \   \mbox{ for }a\in U,\  m,k\in \mathbb{Z}.
\end{eqnarray}
\end{lem}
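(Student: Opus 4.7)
The plan is to reduce both claims to the inclusion (\ref{eq:3.1}) and then to establish (\ref{eq:3.1}) by induction on $k$. The property $F_k \subseteq F_{k+1}$ is immediate from the spanning definition, since enlarging $k$ only relaxes the constraint $\sum m_i \leq k$. Granting (\ref{eq:3.1}), the union $\bigcup_{k \geq 0} F_k$ is stable under every mode $a_m$ with $a \in U$; containing the vacuum vector $v$, it is an $\A(h)$-submodule equal to $W$ by the generation hypothesis. Thus the entire content of the lemma is the inclusion (\ref{eq:3.1}).

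To prove (\ref{eq:3.1}) by induction on $k$, the base case $k = 0$ follows from the vacuum condition ($a_m v = 0$ for $m \geq 0$) together with the spanning definition ($a_m v \in F_{-m}$ for $m \leq -1$). For the inductive step it suffices to check the claim on spanning vectors of the form $b_{-l} w'$ with $b \in \B$, $l \geq 1$, and $w' \in F_{k-l}$. When $m \leq -1$ there is nothing to do: expanding $w'$ in the spanning form, $a_m b_{-l} w'$ is a sum of products of negatively-indexed modes applied to $v$ with total absolute-exponent sum at most $|m| + k = k - m$, so it lies in $F_{k-m}$ by definition. When $m \geq 0$, I would commute $a_m$ past $b_{-l}$ using the relations of Definition \ref{def}. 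In the quasi-commutative cases $(a,b) \notin \{(e,f),(f,e)\}$, the relation $a(z)b(w) = h(\pm(w-z))\, b(w)a(z)$ expands into a mode identity of the shape $a_m b_{-l} = \sum_{r \geq 0,\, 0 \leq i \leq r} c_{r,i}\, b_{-l+r-i}\, a_{m+i}$ with explicit scalars $c_{r,i}$. Applying this to $w'$ and invoking the inductive hypothesis for $k-l<k$, first to get $a_{m+i} w' \in F_{k-l-m-i}$ and then to control $b_{-l+r-i}$ acting on that subspace, each nonzero summand lies in $F_{k-m-r} \subseteq F_{k-m}$. In the remaining cases $(e,f)$ and $(f,e)$, the bracket relation (\ref{eqef}) contributes an additional term of the form $\pm \psi_{m-l} w'$, which lies in $F_{(k-l)-(m-l)} = F_{k-m}$ by the inductive hypothesis, while the rearranged summand $f_{-l} e_m w'$ (respectively $e_{-l} f_m w'$) is handled by the same two-step use of the inductive hypothesis.

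The only technical subtlety, which I regard as the main obstacle, is that the mode identities coming from the defining relations are formal infinite sums (cf. Remark \ref{topological}). One must verify that when such an identity is applied inside $W$ the sum over $(r,i)$ collapses to a finite one: this should be tracked in parallel with the inductive proof of (\ref{eq:3.1}), since once the inclusion is established up through level $k-1$, any vector of $F_{k-l}$ is annihilated by $a_n$ with $n > k-l$, which supplies exactly the vanishing needed to make the commutation identities rigorous at level $k$.
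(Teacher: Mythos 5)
Your proof is correct and follows essentially the same route as the paper: reduce everything to the inclusion (\ref{eq:3.1}), prove it by induction on $k$ using the mode form of the defining relations (the identity $a(m)b(n)=h(0)\,b(n)a(m)+\sum_{i+j\geq 1}\alpha_{ij}b(n+i)a(m+j)+\beta\psi(m+n)$, which is exactly your $\sum_{r,i}c_{r,i}b_{-l+r-i}a_{m+i}$ plus the $\psi$ term), and then observe that $\bigcup_k F_k$ is a submodule containing $v$. Your version simply spells out the inductive step and the finiteness of the formal sums, which the paper leaves implicit.
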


\begin{proof} We first prove (\ref{eq:3.1}).
By definition, (\ref{eq:3.1}) is always true  for $m<0$. Now we consider $m\geq 0$.
If $k\le 0$, it is true as $F_{k}=0$ for $k<0$ and $F_{0}=\mathbb{C}v$.
Assume $k\geq 1$.
Note that from the defining relations of $\A(h)$ and Remark \ref{inverse-relation}, for $a,b\in \B,\ m,n\in \Z$,  we have
\begin{eqnarray}
a(m)b(n)=g(0) b(n)a(m)+\sum_{i,j\geq 0, i+j\geq 1}\alpha_{ij}b(n+i)a(m+j)+\beta \psi(m+n)\label{eq:3.2}
\end{eqnarray}
for some $\alpha_{ij},\beta\in {\mathbb{C}}$, depending on $a,b$.
Then (\ref{eq:3.1}) follows from this and an induction on $k$.
 From (\ref{eq:3.1}), we see that $\cup_{k\geq0}F_{k}$ is a submodule of $W$,
which contains the generator $v$ of $W$. Consequently, $W=\cup_{k\geq 0}F_{k}$.
Thus the subspaces $F_{k}$ for $k\in \mathbb{Z}$ form an ascending filtration of $W$.
\end{proof}

On the other hand, we have:

\begin{lem}\label{span2}
Let $W$ be a vacuum $\A(h)$-module with vacuum vector $v$. For  $n\in \N$, set
$$E_{n}={\rm span}\{a^{(1)}(m_{1})\cdots a^{(r)}(m_{r})v\  |\   0\leq r\leq n, \  a^{(i)}\in \B, \  m_{i}\in \mathbb{Z}\}.$$
Then $E_{n}$ for $n\in \N$ form an ascending filtration of $W$.
Furthermore, if $g(0)=-1$, then for every $n\in \N$,
$E_{n}$ is linearly spanned by vectors
\begin{eqnarray}
e(-m_{1})\cdots e(-m_{r})f(-n_{1})\cdots f(-n_{s})\psi(-k_{1})\cdots \psi(-k_{l})v\label{eq:3.3}
\end{eqnarray}
for $r,s,l\geq 0,\  m_{i},n_{j},k_{t}\ge 1$ with
$$r+s+l\leq n,\  \   m_{1}>\cdots>m_{r},\  \   n_{1}>\cdots>n_{s}, \   \   k_{1}\geq\cdots\geq k_{l}.$$
If $g(0)=1$, then for every  $n\in \N$, $E_{n}$ is linearly spanned by the vectors
\begin{eqnarray}
e(-m_{1})\cdots e(-m_{r})f(-n_{1})\cdots f(-n_{s})\psi(-k_{1})\cdots \psi(-k_{l})v,  \label{eq:3.4.}
\end{eqnarray}
where  $r,s,l\geq 0, \   m_{i},n_{j},k_{t}\ge 1$ with
$$r+s+l\leq n,\   \   m_{1}\geq\cdots\geq m_{r},\  \    n_{1}\geq\cdots\geq n_{s}, \   \   k_{1}\geq\cdots\geq k_{l}.$$
\end{lem}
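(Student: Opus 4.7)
The plan is to establish both assertions by induction on $n$. That the subspaces $E_n$ form an ascending filtration is immediate from the definition: $E_n\subseteq E_{n+1}$ is clear, and $\bigcup_n E_n$ contains $v$ and is stable under every generator $a(m)$ with $a\in U,\ m\in\Z$, so it equals $W$ because $v$ generates $W$. For the spanning claim let $G_n$ denote the subspace spanned by the canonical monomials (\ref{eq:3.3}) or (\ref{eq:3.4.}) (according to whether $h(0)=-1$ or $h(0)=1$); the inclusion $G_n\subseteq E_n$ is obvious, and I prove the reverse inclusion $E_n\subseteq G_n$ by induction on $n$, the base $n=0$ being trivial.

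The workhorse in the inductive step is (\ref{eq:3.2}): swapping two adjacent generators $a(p)b(q)$ produces $h(0)b(q)a(p)$, plus a sum of higher-mode corrections $\sum\alpha_{ij}b(q+i)a(p+j)$ with $i+j\ge 1$, plus (when $\{a,b\}=\{e,f\}$) a single $\psi$-term $\beta\psi(p+q)$ with one fewer factor, hence lying in $G_{n-1}\subseteq G_n$ by induction. The argument now has two phases. In the first phase I reduce a given monomial $a^{(1)}(m_1)\cdots a^{(n)}(m_n)v$ to one involving only negative modes by repeatedly moving the right-most non-negative mode one step to the right via (\ref{eq:3.2}); once it reaches the rightmost slot it annihilates $v$. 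The higher-mode corrections preserve the factor count but strictly increase the total mode $\sum m_i$, and by restrictedness of $W$ only finitely many of them contribute on any fixed vector, so after finitely many iterations the monomial becomes, modulo $G_{n-1}$, a linear combination of strictly negative-mode products.

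In the second phase I arrange these products in canonical order. The bracket $[e(z),f(w)]=z^{-1}\delta(w/z)\psi(z)$ exchanges an $e$ with an $f$ up to a single $\psi$-term in $G_{n-1}$, so all $f$'s move past all $e$'s; the $\psi$-$e$ and $\psi$-$f$ swaps, both instances of (\ref{eq:3.2}), push every $\psi$ to the right modulo higher-mode corrections that may temporarily reintroduce non-negative modes and hence require a return to the first phase. The $\psi$-block is then sorted freely by $[\psi(z),\psi(w)]=0$. Within the $e$- and $f$-blocks the identity $a_pa_q=h(0)a_qa_p+(\text{higher-mode corrections})$ governs the sort: when $h(0)=1$, adjacent modes commute modulo higher corrections, yielding the non-strict orderings of (\ref{eq:3.4.}); when $h(0)=-1$, setting $p=q$ gives $2a_p^{2}\equiv(\text{higher-mode corrections})$, eliminating repeats, while adjacent unequal modes swap with an overall sign, forcing the strict orderings of (\ref{eq:3.3}). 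The main obstacle is termination of the loop between the two phases, since reordering in phase two can reintroduce non-negative modes; this is resolved by the strict gain in total mode at each correction together with restrictedness of $W$, and is formally tracked by a secondary descending induction on the pair (number of non-negative mode occurrences, number of inversions relative to the canonical order).
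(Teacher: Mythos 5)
Your argument is correct and is essentially the paper's own: the paper first proves that $E_{n}$ coincides with the span of negative-mode monomials (your phase one), and then sorts those monomials via relation (\ref{eq:3.2}) by a lexicographic induction on the pair (number of factors, degree in the filtration $F_{s}$ of Lemma \ref{span1}), which is exactly your combination of ``fewer factors for the $\psi$-terms'' and ``strict gain in total mode for the corrections.'' One bookkeeping caution: the pair you propose to track formally, (number of non-negative mode occurrences, number of inversions), is not by itself a descending measure, since a correction term $b(q+i)a(p+j)$ with $i+j\ge 1$ can increase both components; the quantity that genuinely decreases is the $F$-degree $-\sum m_{i}$, which Lemma \ref{span1} shows is a nonnegative integer on any nonvanishing monomial applied to $v$, so the clean well-founded measure is the lexicographic triple (factor count, $F$-degree, inversions). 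Finally, ``restrictedness of $W$'' is not among the hypotheses, but it follows from Lemma \ref{span1} for any vacuum module, so your appeal to it is harmless provided you cite that lemma rather than assume it.
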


\begin{proof} It is clear that $E_{n}$ for $n\in \N$ form an ascending filtration for $W$.
 For $n\in \N$, set
$$E_{n}^{'}={\rm span}\{u_{1}(-m_{1})\cdots u_{r}(-m_{r})v\  |\   0\leq r\leq n,\  u_{1},\dots, u_{r}\in \B, \  m_{1},\dots,m_{r}\geq 1\}.$$
Now, we prove $E_{n}'=E_{n}$. From definition, we have $E_{n}^{'}\subseteq E_{n}$ for $n\in\N$.
For $u\in U,\   m\in \Z$ with $m<0$, we have $u(m)E_{n}^{'}\subseteq E_{n+1}^{'}$ by definition.
 On the other hand, by using induction on $n$ and (\ref{eq:3.2}),
we get $u(m)E_{n}^{'}\subseteq E_{n}^{'}$  for $u\in \B,\ m\geq 0$.  Note that $E_{0}'=\C v$.
It then follows from an induction that $E_{n}\subseteq E_{n}^{'}$ for $n\in\N$. Thus $E_n=E_{n}^{'}$ for $n\in\N$.
Now, let $a,b\in \B,\ m,n\in \Z$ with $m,n\le -1$ and let $w\in E_{r}\cap F_{s}$ with $r,s\in \Z$
(where $F_s$ is defined in Lemma \ref{span1}).
Notice that $a(m)b(n)w\in E_{r+2}\cap F_{s-m-n}$. From (\ref{eq:3.2}) we have
\begin{eqnarray}
a(m)b(n)w\equiv g(0) b(n)a(m)w\  \mod (E_{r+2}\cap F_{s-m-n-1}+E_{r+1}\cap F_{s-m-n}).
\end{eqnarray}
Then (\ref{eq:3.3}) and (\ref{eq:3.4.}) follow from induction on $(r,s)$ with respect to the lexicographical order.
\end{proof}

Next,  we give a tautological construction of  a universal vacuum $\A(h)$-module.
Set $\mathfrak{T}=T(U\otimes\mathbb{C}[t,t^{-1}])$, the tensor algebra over vector space $U\otimes\mathbb{C}[t,t^{-1}]$.
Let $d$ be the derivation of algebra $\mathfrak{T}$ determined by
$$d(u\otimes t^{n})=-n(u\otimes t^{n-1})\  \    \   \mbox{ for }u\in \B, \   n\in\Z.$$
Set $\mathfrak{T}_{-}=\sum_{n\ge 1}\mathfrak{T}_{-n}$.
Furthermore, set $J=\mathfrak{T}\C[d]\mathfrak{T}_{-}$, a left ideal of $\mathfrak{T}$.
Then set $V_{\mathfrak{T}}=\mathfrak{T}/J$, a left $\mathfrak{T}$-module.
We see that for any $w\in V_{\mathfrak{T}}$ and $u\in \B,\   u(n)w=0$ for $n$ sufficiently large
(as for any $a\in \mathfrak{T}$, $u(n)a\in \mathfrak{T}_{-}\subset J$ for $n$ sufficiently large).
Then define $V_{\A(h)}$ to be the quotient $\mathfrak{T}$-module of $V_{\mathfrak{T}}$ modulo the submodule
corresponding to the defining relations of $\A(h)$.
Let $\mathbf{1}$ denote the image of $1$ in $V_{\A(h)}$. Then $(V_{\A(h)},\mathbf{1})$ is a vacuum $\A(h)$-module.
Since $d J\subset J$, $d$ acts on $V_{\A(h)}$ such that $d\mathbf{1}=0,\ \ [d,u(x)]=\frac{d}{dx}u(x)$ for $u\in \B$.
We see that vacuum module $V_{\A(h)}$ is universal in the sense that for any vacuum $\A(h)$-module $(W, w_{0})$
on which $d$ acts such that $dw_{0}=0$, $[d,u(x)]=\frac{d}{dx}u(x)$ for $u\in \B$,
there exists a unique $\A(h)$-module homomorphism from $V_{\A(h)}$ to $W$, sending $\mathbf{1}$ to $w_{0}$.

Here, we have:

\begin{thm}
There exists a weak quantum vertex algebra structure on the vacuum $\A(h)$-module $(V_{\A(h)},\bf 1)$,
which is uniquely determined by the condition that $\bf{1}$ is the vacuum vector and
$$Y(e(-1){\bf 1},x)=e(x),\ \   Y(f(-1){\bf 1},x)=f(x),\  \  Y(\psi(-1){\bf 1},x)=\psi(x).$$
Furthermore, for every restricted $\A(h)$-module $W$, there exists a $V_{\A(h)}$-module structure
$Y_{W}(\cdot,x)$ on $W$,
which is uniquely determined by
$$Y_{W}(e(-1){\bf 1},x)=e(x),\   \   Y_{W}(f(-1){\bf 1},x)=f(x),\  \  Y_{W}(\psi(-1){\bf 1},x)=\psi(x).$$
\end{thm}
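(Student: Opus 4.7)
The plan is to apply Theorem \ref{thm2.1} with $W = V_{\A(h)}$ itself, producing a weak quantum vertex algebra $V \subset \mathcal{E}(V_{\A(h)})$ with $V_{\A(h)}$ as a faithful module, and then to identify $V$ with $V_{\A(h)}$ via a creation map using the universal property of $V_{\A(h)}$. The first step is to verify that $\{e(x), f(x), \psi(x)\}\subset \mathcal{E}(V_{\A(h)})$ is $S$-local. The relations (\ref{eqee}) and (\ref{eqpsie}) are already $S$-locality relations in the sense of (\ref{eS-locality-relation}) with $k=0$: for instance $e(x_1)e(x_2) = h(x_2-x_1)e(x_2)e(x_1)$ corresponds to $r=1$, $b_1=a_1=e$, $f_1(x)=h(x)$, while $\psi(x_1)f(x_2)=h(x_1-x_2)f(x_2)\psi(x_1)$ uses $f_1(x)=h(-x)$. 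For (\ref{eqef}), multiplication by $(x_1-x_2)$ kills the $\delta$-factor, giving $(x_1-x_2)e(x_1)f(x_2) = (x_1-x_2)f(x_2)e(x_1)$, i.e.\ $S$-locality with $k=1$. Theorem \ref{thm2.1} then provides a weak quantum vertex algebra $V := \langle e(x), f(x), \psi(x)\rangle$ together with $V_{\A(h)}$ as a faithful $V$-module via $Y_{V_{\A(h)}}(a(x), z) = a(z)$.

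Next define $\theta: V \to V_{\A(h)}$ by $\theta(a(x)) = a(x)\mathbf{1}|_{x=0}$. This is well-defined because the ``creation'' property $a(x)\mathbf{1}\in V_{\A(h)}[[x]]$ holds on the generators (by the vacuum conditions $e_n\mathbf{1} = f_n\mathbf{1} = \psi_n\mathbf{1} = 0$ for $n\ge 0$) and propagates through the $Y_\mathcal{E}$-operation by a standard weak-associativity argument. To prove $\theta$ is bijective, equip $V$ with its own $\A(h)$-module structure by letting $a_n$ (for $a\in\B$, $n\in\mathbb{Z}$) act as the $n$-th Fourier coefficient of $Y_V(\underline{a}, y)$, where $\underline{a}\in V$ denotes the corresponding generator. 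The $\mathcal{S}$-Jacobi identity in $V$, together with the $S$-locality data of the first step, yields precisely the defining $\A(h)$-relations for these modes; the weak quantum vertex algebra vacuum axiom forces $\underline{a}_n\mathbf{1}_V = 0$ for $n\ge 0$; and the canonical translation operator $D$ of $V$ satisfies $D\mathbf{1}_V = 0$ together with $[D, \underline{a}_n] = -n\underline{a}_{n-1}$, matching the derivation $d$ on $\A(h)$.

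The universal property of $V_{\A(h)}$ then supplies a unique $\A(h)$-module homomorphism $\varphi: V_{\A(h)} \to V$ with $\varphi(\mathbf{1})=\mathbf{1}_V$ intertwining $d$ and $D$. Surjectivity of $\varphi$ follows by applying Lemma \ref{span2} to $V$, since the image of $\varphi$ is an $\A(h)$-submodule containing $\mathbf{1}_V$. Injectivity follows by verifying $\theta\circ\varphi = \mathrm{id}_{V_{\A(h)}}$: both sides are $\A(h)$-module maps fixing $\mathbf{1}$, and $V_{\A(h)}$ is cyclic on $\mathbf{1}$. Transporting the weak quantum vertex algebra structure from $V$ to $V_{\A(h)}$ via $\varphi$ produces the desired structure, which by construction satisfies $Y(e(-1)\mathbf{1},x)=e(x)$, $Y(f(-1)\mathbf{1},x)=f(x)$, $Y(\psi(-1)\mathbf{1},x)=\psi(x)$. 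For a general restricted $\A(h)$-module $W$, the same recipe produces a weak quantum vertex algebra $V_W := \langle e(x), f(x), \psi(x)\rangle \subset \mathcal{E}(W)$ with $W$ as a module, and the universal property gives an $\A(h)$-module map $V_{\A(h)}\to V_W$, through which the $V_{\A(h)}$-module structure $Y_W$ on $W$ is defined.

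The principal obstacle is verifying that the modes of $\underline{e}, \underline{f}, \underline{\psi}$ acting on $V$ via $Y_V$ actually satisfy the defining $\A(h)$-relations, so that $V$ is genuinely an $\A(h)$-module in the sense required by the universal property. This amounts to extracting commutator identities from the $\mathcal{S}$-Jacobi identity using the $S$-locality data of the first step, and carefully accounting for the $\delta$-contribution from (\ref{eqef}) so as to recover $[e_m, f_n] = \psi_{m+n}$ while the remaining pairs produce the $h(\pm x)$-twisted commutations. All other axioms (vacuum, derivation, spanning) are direct consequences of the construction in Theorem \ref{thm2.1}.
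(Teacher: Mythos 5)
Your proposal is correct and follows essentially the same route as the paper: generate a weak quantum vertex algebra from the $S$-local set $\{e(x),f(x),\psi(x)\}$ via Theorem \ref{thm2.1}, show it is a vacuum $\A(h)$-module, and use the universal property of $V_{\A(h)}$ to transport the structure (the paper outsources your ``principal obstacle''---that the modes of the generating fields of $\langle U\rangle$ satisfy the $\A(h)$-relations---to Proposition 6.7 of \cite{Li-nonlocal}, and the isomorphism $V_{\A(h)}\cong\langle U\rangle$ to Theorem 6.3 there). The only cosmetic difference is that the paper runs the construction once on the direct sum $\bar W=V_{\A(h)}\oplus W$ to obtain the algebra structure and the module structure on $W$ simultaneously, whereas you treat $V_{\A(h)}$ and $W$ separately and pull the module structure back along the surjection $V_{\A(h)}\to\langle U_W\rangle$; both are fine.
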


\begin{proof}
 Let $W$ be any restricted $\A(h)$-module. Then  the direct sum $V_{\A(h)}\oplus W$, denoted by $\bar{W}$, is also a restricted $\A(h)$-module.
 Set $U_{\bar{W}}=\{e(x), f(x), \psi(x)\}\cup \{1_{\bar{W}}\}\subseteq \E(\bar{W})$.
 From the defining relations of $\A(h)$, we see that $U_{\bar{W}}$ is an ${\mathcal{S}}$-local subset of $\E(\bar{W})$.
 Then by \cite{Li-nonlocal} (Theorem 5.8), $U_{\bar{W}}$ generates a weak quantum vertex algebra $V_{\bar{W}}$
 under the vertex operator operation $Y_{\E}$, and $\bar{W}$ is a faithful $V_{\bar{W}}$-module with $Y_{\bar{W}}(u(x),x_{0})=u(x_{0})$ for $u(x)\in V_{\bar{W}}$. From  \cite{Li-nonlocal} (Proposition 6.7),
 we see that $V_{\bar{W}}$ is a vacuum $A(h)$-module with $e(z), \ f(z),\ \psi(z)$ acting as
 $Y_{\E}(e(x),z),\ Y_{\E}(f(x),z), \ Y_{\E}(\psi(x),z)$, respectively.
  Since $V_{\A(h)}$ is universal, there exists an $\A(h)$-module homomorphism $\varphi$ from $V_{\A(h)}$ to $V_{\bar{W}}$, sending $\bf 1$ to $1_{\bar{W}}$. Since $V_{\A(h)}$ is a vacuum $\A(h)$-module, and
  $V_{\A(h)}$ admits an action of $d$ such that $d\mathbf{1}=0, \ [d,u(x)]=\frac{d}{dx}u(x)$ for $u\in U$.
  Then by  \cite{Li-nonlocal} (Theorem 6.3), there exists  a weak quantum vertex algebra structure on $V_{\A(h)}$
  with $\bf 1$ as the vacuum vector such that $Y(e,x)=e(x),\ Y(f,x)=f(x),\ Y(\psi,x)=\psi(x).$
  Furthermore, from Theorem 6.5 in \cite{Li-nonlocal}, $\bar{W}$ is a $V_{\A(h)}$-module with $W$ as a submodule.
  \end{proof}

\section{Non-degeneracy of weak quantum vertex algebra $V_{\A(h)}$}
In this section,  we restrict ourselves to the case where  $h(z)$ is factorizable in the sense that
$h(z)=\pm q(-z)q(z)^{-1}$ for some nonzero $q(z)\in \C[[z]]$. In this case, we
determine a P-B-W type basis for the vacuum $\A(h)$-module $V_{\A(h)}$ and
prove that the weak quantum vertex algebra $V_{\A(h)}$ is a non-degenerate quantum vertex algebra.

Throughout this section,  we assume  that $h(z)=\pm q(-z)q(z)^{-1}$ where  $q(z)\in \mathbb{C}[[z]]$ such that $q(0)=1$.
To obtain a basis of the vacuum $\A(h)$-module $V_{\A(h)}$, we shall use a vertex algebra to obtain
a vacuum $\A(h)$-module for $h(z)=q(-z)q(z)^{-1}$, whereas for $h(z)=-q(-z)q(z)^{-1}$, we shall use a vertex superalgebra.

First, we consider the case $h(z)=q(-z)q(z)^{-1}$.
Let $\mathfrak{g}=\C\bar{e}\oplus \C\bar{f}\oplus \C \bar{\psi}$ be a (Heisenberg) Lie algebra with bracket relations
\begin{eqnarray}
[\bar{\psi},\bar{e}]=0=[\bar{\psi},\bar{f}],\    \    \    \   [\bar{e},\bar{f}]=\bar{\psi}.
\end{eqnarray}
Then we have the loop Lie algebra $L({\mathfrak{g}})=\mathfrak{g}\otimes\C[t,t^{-1}]$.

Follow a common practice alternatively to denote $a\otimes t^{m}$ by $a(m)$ for $a\in \g,\ m\in \Z$.
For $u\in \g$, form a generating function
$$u(x)=\sum_{n\in \Z}u(n)x^{-n-1}.$$
Let $\mathfrak{g}\otimes\C[t]$ act trivially on $\C$.
Form an induced module
$$V_{L(\mathfrak{g})}=U(L({\mathfrak{g}}))\otimes_{U(\mathfrak{g\otimes\C[t]})}\C.$$
 Set $\mathbf{1}=1\otimes1\in V_{L(\mathfrak{g})}$. In view of the P-B-W theorem,
 $V_{L(\mathfrak{g})}$ has a basis consisting of vectors
$$\bar{e}(-m_{1})\cdots\bar{e}(-m_{r})\bar{f}(-n_{1})\cdots\bar{f}(-n_{s})\bar{\psi}(-k_{1})\cdots\bar{\psi}(-k_{l})\mathbf{1}$$
for $r,s,l\geq 0$, $m_{1}\geq\cdots\geq m_{r}\geq 1, n_{1}\geq\cdots\geq n_{s}\geq 1, k_{1}\geq\cdots\geq k_{l}\geq 1$.

Identify $\mathfrak{g}$ as a subspace of $V_{L(\mathfrak{g})}$ through the linear map
$u\in \mathfrak{g}\mapsto u(-1)\mathbf{1}$.
It is known that there exists a vertex algebra structure on $V_{L(\mathfrak{g})}$, which is uniquely determined
by the conditions that $\mathbf{1}$ is the vacuum vector and that $Y(u,x)=u(x)$ for $u\in \mathfrak{g}$.

Note that $L(\mathfrak{g})$ is a $\Z$-graded Lie algebra with
$$ \deg(a\otimes t^{n})=-n\   \   \mbox{ for }a\in\mathfrak{g},\ n \in \Z.$$
Naturally,  $U(L(\g))$ is a $\Z$-graded algebra. It follows that
$V_{L(\g)}$ is a $\Z$-graded $L(\g)$-module with $(V_{L(\g)})_{(n)}=0$ for $n<0$,
$(V_{L(\g)})_{(0)}=\C \mathbf{1}$, and $(V_{L({\g})})_{(1)}=\g$.
Furthermore, we have
$$a(m)(V_{L(\g)})_{(n)}\subset (V_{L(\g)})_{(n-m)}\    \    \   \mbox{  for }a\in\mathfrak{g},\   m,n\in\Z.$$

Next, we are going to define a vacuum $\A(h)$-module structure on the vertex algebra $V_{L(\g)}$.
First, we have:

\begin{lem}\label{pseudo}
For any $p_1(t),p_2(t)\in \C[[t]]$, there exists
$$\phi(t)\in \Hom_{\C}(V_{L(\g)}, V_{L(\g)}\otimes\C[[t]])\subset \Hom (V_{L(\g)}, V_{L(\g)}[[t]]),$$
which is uniquely determined by
\begin{eqnarray*}
&&\phi(t)(\mathbf{1})=\mathbf{1},\   \     \phi(t)Y(v,x)=Y(\phi(t-x)v,x)\phi(t) \   \   \mbox{for} \ v\in V_{L(\g)},\\
&&\phi(t)\bar{e}(x)=p_1(t-x)\bar{e}(x)\phi(t), \    \    \phi(t)\bar{f}(x)=p_2(t-x)\bar{f}(x)\phi(t),\nonumber\\
&&\phi(t)\bar{\psi}(x)=p_1(t-x)p_2(t-x)\bar{\psi}(x)\phi(t).
\end{eqnarray*}
Furthermore, we have
\begin{eqnarray}
\phi(x)\phi(t)=\phi(t)\phi(x),\   \   \  [\mathcal{D}, \phi(t)]=\frac{d}{dt}\phi(t),
\end{eqnarray}
where $\D$ is the linear operator on $V_{L(\g)}$, defined by $\D v=v_{-2}{\bf 1}$ for $v\in V_{L(\g)}$.
\end{lem}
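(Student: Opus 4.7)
The plan is to construct $\phi(t)$ via the universal property of $V_{L(\g)}$ as a vacuum module for the Heisenberg loop Lie algebra $L(\g)$, and then to deduce the remaining properties from uniqueness arguments. The three prescribed commutation relations amount to twisting the action of the generating fields by the scalar factors $p_1(t-x)$, $p_2(t-x)$, and $p_1(t-x)p_2(t-x)$, respectively, in a way that preserves the Heisenberg bracket.

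First I would equip $V_{L(\g)}\otimes\C[[t]]$ with a new $L(\g)$-module structure in which the three generating series act as
$$\bar e^{\mathrm{new}}(x)=p_1(t-x)\bar e(x),\quad \bar f^{\mathrm{new}}(x)=p_2(t-x)\bar f(x),\quad \bar\psi^{\mathrm{new}}(x)=p_1(t-x)p_2(t-x)\bar\psi(x).$$
Since $p_i(t-x)\in\C[[t]][[x]]$, each mode of the new fields is a $\C[[t]]$-linear operator on $V_{L(\g)}\otimes\C[[t]]$ (the infinite sum defining it terminates on any fixed vector), the restricted condition is preserved, and the image stays in the algebraic tensor product. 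The Heisenberg bracket $[\bar e(x_1),\bar f(x_2)]=x_1^{-1}\delta(x_2/x_1)\bar\psi(x_2)$ is preserved under the twist via the identity $p_1(t-x_1)p_2(t-x_2)x_1^{-1}\delta(x_2/x_1)=p_1(t-x_2)p_2(t-x_2)x_1^{-1}\delta(x_2/x_1)$, while $\mathbf 1\otimes 1$ is killed by all nonnegative modes of the new fields. By the universal property of $V_{L(\g)}=U(L(\g))\otimes_{U(\g\otimes\C[t])}\C$, there is then a unique $L(\g)$-module homomorphism $\phi(t):V_{L(\g)}\to V_{L(\g)}\otimes\C[[t]]$ with $\phi(t)\mathbf 1=\mathbf 1$, and unpacking its intertwining property in terms of generating series yields the three asserted commutation relations.

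For the general iterate $\phi(t)Y(v,x)=Y(\phi(t-x)v,x)\phi(t)$ I would proceed by induction on the length $r$ of a P-B-W word $v=a^{(1)}(-m_1)\cdots a^{(r)}(-m_r)\mathbf 1$ with $a^{(i)}\in\B$ and $m_i\ge 1$. The cases $r=0$ and $v\in\{\bar e,\bar f,\bar\psi\}$ follow directly from the commutation relations above. For the inductive step I would apply the standard vertex-algebra identity
$$Y(a(-m)u,x)=\frac{1}{(m-1)!}\partial_x^{m-1}\left[a^-(x)Y(u,x)+Y(u,x)a^+(x)\right],$$
use the commutation $\phi(t)\bar a(x)=p(t-x)\bar a(x)\phi(t)$ together with the induction hypothesis on $u$, and expand the derivatives of $p(t-x)$ via the Leibniz rule; the derivatives of $p(t-x)$ in $x$ match exactly the shift $\phi(t)\to\phi(t-x)$ appearing on the right-hand side, as one already checks cleanly in the sub-case $u=\mathbf 1$.

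The commutativity $\phi(x)\phi(t)=\phi(t)\phi(x)$ and the derivation relation both follow from the same uniqueness principle. The composites $\phi(x)\phi(t)$ and $\phi(t)\phi(x)$ are both $L(\g)$-module homomorphisms from $V_{L(\g)}$ to $V_{L(\g)}\otimes\C[[t,x]]$ intertwining the doubly twisted action (with $\bar e(y)$ scaled by $p_1(t-y)p_1(x-y)$, and analogously for $\bar f,\bar\psi$), both sending $\mathbf 1$ to $\mathbf 1$, so they coincide. For the derivation relation, differentiating $\phi(t)\bar e(x)=p_1(t-x)\bar e(x)\phi(t)$ in $t$, bracketing the same relation with $\D$ via $[\D,\bar e(x)]=\partial_x\bar e(x)$, and using $\partial_x p_1(t-x)=-\partial_t p_1(t-x)$ gives that $\alpha(t):=[\D,\phi(t)]-\frac{d}{dt}\phi(t)$ satisfies $\alpha(t)\bar a(x)=p(t-x)\bar a(x)\alpha(t)$ for each generator $a\in\B$ together with $\alpha(t)\mathbf 1=0$; uniqueness of the intertwiner then forces $\alpha(t)=0$. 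The main obstacle is the Leibniz-rule bookkeeping in the inductive step of the iterate property; the rest is a mechanical application of the universal property.
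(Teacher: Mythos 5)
Your construction of $\phi(t)$ is correct, and its core --- twisting the generating fields by $p_1(t-x)$, $p_2(t-x)$, $p_1(t-x)p_2(t-x)$, checking that the Heisenberg bracket survives via the $\delta$-function substitution $x_1\mapsto x_2$, and invoking the universal property of the induced module with $\mathbf 1\otimes 1$ as vacuum-like vector --- is exactly the paper's construction. Where you genuinely diverge is in deriving the general relation $\phi(t)Y(v,x)=Y(\phi(t-x)v,x)\phi(t)$ and the two ``furthermore'' assertions. The paper's device is to observe that $\C[[t]]$ is itself a vertex algebra with $Y(p(t),x)q(t)=p(t-x)q(t)$, so that your twisted fields are precisely the vertex operators $\bar Y(\bar e\otimes p_1(t),x)$, $\bar Y(\bar f\otimes p_2(t),x)$, $\bar Y(\bar\psi\otimes p_1(t)p_2(t),x)$ of the tensor-product vertex algebra $V_{L(\g)}\otimes\C[[t]]$; since $\phi$ sends generators to generators compatibly with these vertex operators and $\g$ generates $V_{L(\g)}$, $\phi$ is a vertex algebra homomorphism, and the general intertwining relation, the commutativity $\phi(x)\phi(t)=\phi(t)\phi(x)$, and $[\D,\phi(t)]=\frac{d}{dt}\phi(t)$ (from $\phi\,\D=(\D\otimes 1-1\otimes d/dt)\phi$) all drop out at once. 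Your PBW induction with normal-ordered products, and your intertwiner-uniqueness argument for $\alpha(t)=[\D,\phi(t)]-\frac{d}{dt}\phi(t)$ (which is valid: $\alpha(t)$ satisfies the same twisted commutation relations and kills $\mathbf 1$, hence vanishes on every PBW vector), reach the same conclusions with more bookkeeping but no extra machinery. One correction: the iterate formula should read $Y(a(-m)u,x)=\frac{1}{(m-1)!}\bigl[(\partial_x^{m-1}a^-(x))Y(u,x)+Y(u,x)(\partial_x^{m-1}a^+(x))\bigr]$; the derivative acts only on the field $a(x)$, not on $Y(u,x)$, so the identity as you wrote it is false and the Leibniz bookkeeping would not close. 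With that repaired --- or, more cleanly, by adopting the paper's tensor-product packaging --- your argument goes through.
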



\begin{proof} Note that $\C[[t]]$ is a vertex algebra with $1$ as the vacuum vector and
$$Y(p(t),x)q(t)=(e^{-x\frac{d}{dt}}p(t))q(t)=p(t-x)q(t)$$
for $p(t), q(t)\in \C[[t]]$. Then we have  a tensor product vertex algebra $V_{L(\g)}\otimes\C[[t]]$, whose
vertex operator map, denoted by $\bar{Y}(\cdot,x)$, is given by
$$\bar{Y}(u\otimes p(t),x)=Y(u,x)\otimes p(t-x)$$
for $u\in V,\  p(t)\in\C[[t]]$.
Then we have
$$[\bar{Y}(\bar{e}\otimes p_1(t), x_{1}), \bar{Y}(\bar{e}\otimes p_1(t), x_{2})]=[Y(\bar{e},x_{1}), Y(\bar{e},x_2)]\otimes p_1(t-x_{1})p_1(t-x_{2})=0,$$
$$[\bar{Y}(\bar{f}\otimes p_2(t), x_{1}), \bar{Y}(\bar{f}\otimes p_2(t), x_{2})]=[Y(\bar{f},x_{1}), Y(\bar{f},x_2)]\otimes p_2(t-x_{1})p_2(t-x_{2})=0,$$
$$[\bar{Y}(\bar{\psi}\otimes p_1(t)p_2(t), x_{1}), \bar{Y}(\bar{e}\otimes p_1(t), x_{2})]
=[Y(\bar{\psi},x_{1}), Y(\bar{e},x_2)]\otimes p_1(t-x_{1})p_2(t-x_{1})p_1(t-x_{2})=0,$$
\begin{eqnarray*}
&&[\bar{Y}(\bar{\psi}\otimes p_1(t)p_2(t), x_{1}), \bar{Y}(\bar{f}\otimes p_2(t), x_{2})]\\
&=&[Y(\bar{\psi},x_{1}), Y(\bar{f},x_2)]\otimes p_1(t-x_{1})p_2(t-x_{1})p_2(t-x_{2})=0,
\end{eqnarray*}
\begin{eqnarray*}
&&[\bar{Y}(\bar{\psi}\otimes p_1(t)p_2(t), x_{1}), \bar{Y}(\bar{\psi}\otimes p_1(t)p_2(t), x_{2})]\\
&=&[Y(\bar{\psi},x_{1}), Y(\bar{\psi},x_2)]\otimes p_1(t-x_{1})p_2(t-x_{1})p_1(t-x_{2})p_2(t-x_{2})\\
&=&0,
\end{eqnarray*}
\begin{eqnarray*}
&&[\bar{Y}(\bar{e}\otimes p_1(t), x_{1}), \bar{Y}(\bar{f}\otimes p_2(t), x_{2})]\nonumber\\
&=&[Y(\bar{e},x_{1}), Y(\bar{f},x_2)]\otimes p_1(t-x_{1})p_2(t-x_{2})\\
&=&x_{1}^{-1}\delta\left(\frac{x_{2}}{x_{1}}\right)Y(\bar{\psi},x_2)\otimes p_1(t-x_{1})p_2(t-x_{2})\\
&=&x_{1}^{-1}\delta\left(\frac{x_{2}}{x_{1}}\right)Y(\bar{\psi},x_2)\otimes p_1(t-x_{2})p_2(t-x_{2})\\
&=&x_{1}^{-1}\delta\left(\frac{x_{2}}{x_{1}}\right)\bar{Y}(\bar{\psi}\otimes p_1(t)p_2(t),x_2).
\end{eqnarray*}
In view of this, $V_{L(\g)}\otimes \C[[t]]$ is an $L(\g)$-module
with $\bar{e}(x)$, $\bar{f}(x)$, $\bar{\psi}(x)$ acting as
$$\bar{Y}(\bar{e}\otimes  p_1(t),x),\   \   \  \bar{Y}(\bar{f}\otimes  p_2(t),x),\   \   \
\bar{Y}(\bar{\psi}\otimes  p_1(t)p_2(t),x),$$
respectively.
 It is clear that ${\bf 1}\otimes 1$ is a vacuum vector.
 It follows that there exists a (unique) $L(\g)$-module homomorphism $\phi$ from
 $V_{L(\g)}$ to $V_{L(\g)}\otimes\C[[t]]$ with $\phi({\bf 1})={\bf 1}\otimes 1$.
 We have
$$\phi(\bar{e})=\bar{e}\otimes p_1(t),\   \   \phi(\bar{f})=\bar{f}\otimes p_2(t),\   \
\phi(\bar{\psi})=\bar{\psi}\otimes p_1(t)p_2(t).$$
 Thus
 $$\phi( Y(a,x)v)=\bar{Y}(\phi(a),x)\phi(v)\   \   \  \mbox{ for }a\in \g,\  v\in V_{L(\g)}.$$
As $\g$ generates $V_{L(\g)}$ as a vertex algebra, it follows that $\phi$ is a vertex algebra homomorphism  from
$V_{L(\g)}$ to $V_{L(\g)}\otimes\C[[t]]$.
Write $\phi$ as $\phi(t)$ with a visible dependence on $t$. Then
$$\phi(t)(\mathbf{1})=\mathbf{1},\  \   \phi(t)(\bar{e})=\bar{e}\otimes p_1(t), \   \   \phi(t)(\bar{f})=\bar{f}\otimes p_2(t),
\   \   \phi(t)(\bar{\psi})=\bar{\psi}\otimes p_1(t)p_2(t)$$
and
\begin{eqnarray*}
\phi(t)(Y(u,x)v)
=\bar{Y}(\phi(t)(u),x)\phi(t)(v)
=Y(\phi(t-x)u,x)\phi(t)v
\end{eqnarray*}
for $u,v\in V_{L(\g)}$. Then the first three relations in the furthermore assertion follows immediately.

It is clear that $\phi(x)\phi(t)=\phi(t)\phi(x)$ on $\g$. Then it follows that $\phi(x)\phi(t)=\phi(t)\phi(x)$ on $V_{L(\g)}$
as $\g$ generates $V_{L(\g)}$ as a vertex algebra.
Noticing that the $\D$-operator of $V_{L(\g)}\otimes \C[[t]]$ is $\D\otimes 1+1\otimes (-d/dt)$, as $\phi$ is a vertex algebra homomorphism, we have
$\phi \D =(\D\otimes 1+1\otimes (-d/dt))\phi$, which implies that
$[\D,\phi(t)]=\frac{d}{dt}\phi(t)$.
\end{proof}

Recall that $h(z)=q(-z)q(z)^{-1}$, where $q(t)\in \C[[t]]$ with $q(0)=1$.
As an immediate consequence of Lemma \ref{pseudo} we have:

\begin{cor}\label{relation}
There exists an invertible element
$$\phi(t)\in \Hom_{\C}(V_{L(\g)}, V_{L(\g)}\otimes \C[[t]])\subset (\End_{\C} V_{L(\g)})[[t]]$$
such that
\begin{eqnarray*}
&&\phi(t)(\mathbf{1})=\mathbf{1},\   \    \nonumber\\
&&\phi(t)\bar{e}(x)=q(t-x)\bar{e}(x)\phi(t), \    \    \phi(t)\bar{f}(x)=q(x-t)\bar{f}(x)\phi(t),\nonumber\\
&& \phi(t)\bar{\psi}(x)=q(t-x)q(x-t)\bar{\psi}(x)\phi(t), \nonumber\\
&& [\mathcal{D}, \phi(t)]=\frac{d}{dt}\phi(t),\nonumber\\
&& \phi(x)\phi(t)=\phi(t)\phi(x).
\end{eqnarray*}
\end{cor}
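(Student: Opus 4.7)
The plan is to invoke Lemma \ref{pseudo} directly with the choice $p_1(t) = q(t)$ and $p_2(t) = q(-t)$, which are well-defined elements of $\C[[t]]$. Substituting into the conclusions of Lemma \ref{pseudo}, the relations $\phi(t)\bar{e}(x) = p_1(t-x)\bar{e}(x)\phi(t)$ and $\phi(t)\bar{f}(x) = p_2(t-x)\bar{f}(x)\phi(t)$ become $\phi(t)\bar{e}(x) = q(t-x)\bar{e}(x)\phi(t)$ and $\phi(t)\bar{f}(x) = q(-(t-x))\bar{f}(x)\phi(t) = q(x-t)\bar{f}(x)\phi(t)$; analogously, $p_1(t-x)p_2(t-x) = q(t-x)q(x-t)$ produces the required relation for $\bar{\psi}$. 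The conditions $\phi(t)\mathbf{1} = \mathbf{1}$, $\phi(x)\phi(t) = \phi(t)\phi(x)$, and $[\D,\phi(t)] = \frac{d}{dt}\phi(t)$ come verbatim from Lemma \ref{pseudo} without additional work.

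The remaining item is invertibility. Since $q(0) = 1$, the series $q(t)^{-1}$ and $q(-t)^{-1}$ belong to $\C[[t]]$. I would apply Lemma \ref{pseudo} a second time with $p_1(t) = q(t)^{-1}$ and $p_2(t) = q(-t)^{-1}$ to obtain a second element $\phi'(t) \in \Hom_{\C}(V_{L(\g)}, V_{L(\g)}\otimes \C[[t]])$ satisfying $\phi'(t)\bar{e}(x) = q(t-x)^{-1}\bar{e}(x)\phi'(t)$ and the analogous relations for $\bar{f}(x)$ and $\bar{\psi}(x)$, with $\phi'(t)\mathbf{1} = \mathbf{1}$. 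The composition $\phi(t)\phi'(t)$ then fixes $\mathbf{1}$ and commutes with each of $\bar{e}(x), \bar{f}(x), \bar{\psi}(x)$; for instance,
\[
\phi(t)\phi'(t)\bar{e}(x) = \phi(t)q(t-x)^{-1}\bar{e}(x)\phi'(t) = q(t-x)^{-1}q(t-x)\bar{e}(x)\phi(t)\phi'(t) = \bar{e}(x)\phi(t)\phi'(t),
\]
and similarly for $\bar{f}$ and $\bar{\psi}$. Since the modes of these three fields applied to $\mathbf{1}$ span $V_{L(\g)}$ (by the P-B-W basis of the induced module recalled in the text), a short induction on the number of creation operators gives $\phi(t)\phi'(t) = \mathrm{id}_{V_{L(\g)}}$, and by symmetry $\phi'(t)\phi(t) = \mathrm{id}_{V_{L(\g)}}$.

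The only bookkeeping to keep straight is that compositions in $(\End V_{L(\g)})[[t]]$ are well defined for the operators at hand; this is automatic because $\phi'(t)v \in V_{L(\g)}[[t]]$ for each $v$, so each coefficient of $t^n$ in $\phi(t)\phi'(t)v$ reduces to a finite sum in $V_{L(\g)}$. Modulo this routine check, the corollary is two applications of Lemma \ref{pseudo} and the short commutator computation above, and I do not foresee any genuine obstacle.
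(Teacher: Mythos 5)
Your proposal is correct and follows the paper's route: the corollary is obtained by specializing Lemma \ref{pseudo} to $p_1(t)=q(t)$ and $p_2(t)=q(-t)$, exactly as you do. Your second application of the lemma with $p_1(t)=q(t)^{-1}$, $p_2(t)=q(-t)^{-1}$ to manufacture the inverse, together with the commutation-and-span argument showing $\phi(t)\phi'(t)=\mathrm{id}$, supplies the invertibility detail that the paper leaves implicit, and it is sound.
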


View $V_{L(\g)}\otimes \C[[t]]$ naturally as a subspace of $V_{L(\g)}[[t]]$.
In this way, we view $\phi(t)$ as a linear map from $V_{L(\g)}$ to $V_{L(\g)}[[t]]$.

\begin{prop}\label{map}
Let $\phi(t)$ be the element of $(\End_{\C}V_{L(\g)})[[t]]$ obtained in Corollary \ref{relation}.
 Then the  assignment
$$e(x)=\bar{e}(x)\phi(x),\  \    f(x)=\bar{f}(x)\phi(x),\  \    \psi(x)=\bar{\psi}(x)\phi(x)\phi(x)$$
uniquely defines a vacuum $\A(h)$-module structure on $V_{L(\g)}$ with $\mathbf{1}$ as the generator, such that  \begin{eqnarray}
[\D,e(x)]=\frac{d}{dx}e(x),\ [\D,f(x)]=\frac{d}{dx}f(x),\ [\D,\psi(x)]=\frac{d}{dx}\psi(x),\label{eq:3.6.}
\end{eqnarray}
where $\D$ is the linear operator on $V_{L(\g)}$, defined by $\D v=v_{-2}{\bf 1}$ for $v\in V_{L(\g)}$.
\end{prop}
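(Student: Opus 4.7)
My approach is to verify, in turn, the four ingredients of the statement: (a)~the defining relations (\ref{eqee})--(\ref{eqpsipsi}) hold for the currents $e(x), f(x), \psi(x)$; (b)~$\mathbf{1}$ is a vacuum vector; (c)~$V_{L(\g)}$ is generated by $\mathbf{1}$ as an $\A(h)$-module; (d)~the $\D$-covariance (\ref{eq:3.6.}).

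For (a) I would systematically push each $\phi$-factor past each $\bar{a}(y)$ via Corollary \ref{relation}, which reduces every quadratic relation to an identity on $\bar e, \bar f, \bar\psi$ that is either trivial (if the corresponding bracket in $\g$ vanishes) or reflects the Heisenberg bracket $[\bar e, \bar f]=\bar\psi$. For instance,
\begin{align*}
e(z)e(w) &= \bar{e}(z)\phi(z)\bar{e}(w)\phi(w) = q(z-w)\bar{e}(z)\bar{e}(w)\phi(z)\phi(w),\\
e(w)e(z) &= q(w-z)\bar{e}(z)\bar{e}(w)\phi(z)\phi(w),
\end{align*}
after invoking $[\bar{e}(z),\bar{e}(w)]=0$ and $\phi(z)\phi(w)=\phi(w)\phi(z)$; since $h(w-z)=q(z-w)q(w-z)^{-1}$, we conclude $e(z)e(w)=h(w-z)e(w)e(z)$. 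The $ff$, $\psi e$, $\psi f$ relations are treated analogously, using the identities $h(w-z)q(w-z)=q(z-w)$ and $h(z-w)q(z-w)=q(w-z)$ to reconcile exchange factors, and $[\psi(z),\psi(w)]=0$ is immediate from the centrality of $\bar\psi$ in $\g$. The commutator is the delicate case: pushing $\phi$'s through yields
\[
[e(z),f(w)] = q(w-z)[\bar{e}(z),\bar{f}(w)]\phi(z)\phi(w) = q(w-z)z^{-1}\delta(w/z)\bar\psi(w)\phi(z)\phi(w),
\]
where I use the Heisenberg current identity $[\bar e(z),\bar f(w)]=z^{-1}\delta(w/z)\bar\psi(w)$. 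The delta function then collapses $q(w-z)$ to $q(0)=1$ and $\phi(z)\phi(w)$ to $\phi(z)^{2}$, leaving $z^{-1}\delta(w/z)\bar\psi(z)\phi(z)^{2}=z^{-1}\delta(w/z)\psi(z)$, as required.

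For (b), since $\phi(x)\mathbf{1}=\mathbf{1}$, we have $a(x)\mathbf{1}=\bar{a}(x)\mathbf{1}\in V_{L(\g)}[[x]]$ for $a\in\{e,f,\psi\}$, so all non-negative modes annihilate $\mathbf{1}$. For (c), once (a) and (b) are in place the assignment factors through the universal vacuum module $V_{\A(h)}$, whose image is the $\A(h)$-submodule of $V_{L(\g)}$ generated by $\mathbf{1}$. By Lemma \ref{span2} this image is spanned by vectors $e(-m_1)\cdots e(-m_r)f(-n_1)\cdots f(-n_s)\psi(-k_1)\cdots \psi(-k_l)\mathbf{1}$. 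Expanding each such vector using the intertwining relations of Corollary \ref{relation} produces an invertible scalar factor (a product of series of the form $q(\xi-\eta)$ or $q(\xi-\eta)q(\eta-\xi)$, each a unit in $\C[[\xi-\eta]]$ because $q(0)=1$) times the corresponding $\bar e,\bar f,\bar\psi$-monomial applied to $\mathbf{1}$, up to terms of strictly lower total degree in the filtration $E_n$ of Lemma \ref{span2} that arise whenever the bracket $[\bar e(z),\bar f(w)]$ is invoked. Induction on this filtration places every P-B-W basis vector of $V_{L(\g)}$ inside the image, proving cyclicity. Finally (d) follows by the Leibniz rule from $[\D,\bar a(x)]=\frac{d}{dx}\bar a(x)$ (vertex algebra $\D$-bracket axiom) together with $[\D,\phi(x)]=\frac{d}{dx}\phi(x)$ (Corollary \ref{relation}). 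The main obstacle is the $[e(z),f(w)]$ computation, where the delta-function collapse of the auxiliary factor $q(w-z)\phi(z)\phi(w)$ must be justified carefully; all remaining steps are essentially bookkeeping with the exchange factors $q$.
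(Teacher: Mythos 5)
Your treatment of parts (a), (b) and (d) — verifying the $\A(h)$-relations by pushing each $\phi$ past each $\bar a(y)$ via Corollary \ref{relation}, using $\phi(z)\phi(w)=\phi(w)\phi(z)$, letting the delta function collapse $q(w-z)\phi(z)\phi(w)$ to $q(0)\phi(z)^{2}=\phi(z)^{2}$ in the $[e,f]$ commutator, and deriving the $\D$-covariance from the Leibniz rule — is exactly the paper's computation and is correct.

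Where you diverge is the cyclicity step (c), and there your bookkeeping does not close as written. The paper's argument is more direct: with $W$ the $\A(h)$-submodule generated by $\mathbf{1}$, the inverse intertwining relations $\phi(x_1)^{-1}e(x)=q(x_1-x)^{-1}e(x)\phi(x_1)^{-1}$ (and their analogues for $f,\psi$) together with $\phi(x)^{-1}\mathbf{1}=\mathbf{1}$ give $\phi(x)^{-1}W\subset W((x))$; hence $\bar e(x)=e(x)\phi(x)^{-1}$, $\bar f(x)=f(x)\phi(x)^{-1}$, $\bar\psi(x)=\psi(x)\phi(x)^{-1}\phi(x)^{-1}$ preserve $W$, so $W$ is an $L(\g)$-submodule containing $\mathbf{1}$ and therefore equals $V_{L(\g)}$. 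Your triangularity route can be made to work, but not with the filtration you name. For the ordered monomials $e(-m_1)\cdots f(-n_1)\cdots\psi(-k_1)\cdots\mathbf{1}$ no reordering of $\bar e,\bar f,\bar\psi$ is ever required, so the bracket $[\bar e(z),\bar f(w)]$ is never invoked; the correction terms come instead from the higher coefficients of the unit series $q$, which shift the modes of $\bar e,\bar f,\bar\psi$ upward. Those corrections have the \emph{same} number of operators as the leading term, hence are not strictly lower in the operator-count filtration $E_n$ of Lemma \ref{span2} — they are strictly lower in the conformal-weight filtration $\bar F_n$ used in the proof of Proposition \ref{basis}. An induction on $E_n$ as you state it therefore does not terminate; replacing it by induction on $\bar F_n$ (or simply using the $\phi(x)^{-1}$ argument above) repairs the step.
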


\begin{proof} By Corollary \ref{relation}, we have
\begin{eqnarray*}
e(x_{1})e(x_{2})&=&\bar{e}(x_{1})\phi(x_{1})\bar{e}(x_{2})\phi(x_{2})\\
&=&\bar{e}(x_{1})\bar{e}(x_{2})q(x_{1}-x_{2})\phi(x_{1})\phi(x_{2})\\
&=&\bar{e}(x_{2})\bar{e}(x_{1})\phi(x_{2})\phi(x_{1})q(x_{1}-x_{2})\\
&=&\bar{e}(x_{2})\phi(x_{2})\bar{e}(x_{1})\phi(x_{1})q(x_{1}-x_{2})q(x_{2}-x_{1})^{-1}\\
&=&e(x_{2})e(x_{1})g(x_{2}-x_{1}),
\end{eqnarray*}
\begin{eqnarray*}
f(x_{1})f(x_{2})&=&\bar{f}(x_{1})\phi(x_{1})\bar{f}(x_{2})\phi(x_{2})\\
&=&\bar{f}(x_{1})\bar{f}(x_{2})q(x_{2}-x_{1})\phi(x_{1})\phi(x_{2})\\
&=&\bar{f}(x_{2})\bar{f}(x_{1})\phi(x_{2})\phi(x_{1})q(x_{2}-x_{1})\\
&=&\bar{f}(x_{2})\phi(x_{2})\bar{f}(x_{1})\phi(x_{1})q(x_{1}-x_{2})^{-1}q(x_{2}-x_{1})\\
&=&f(x_{2})f(x_{1})g(x_{1}-x_{2}),
\end{eqnarray*}
\begin{eqnarray*}
&&[e(x_{1}), f(x_{2})]\\
&=&\bar{e}(x_{1})\phi(x_{1})\bar{f}(x_{2})\phi(x_{2})-\bar{f}(x_{2})\phi(x_{2})\bar{e}(x_{1})\phi(x_{1})\\
&=&\bar{e}(x_{1})\bar{f}(x_{2})q(x_{2}-x_{1})\phi(x_{1})\phi(x_{2})-\bar{f}(x_{2})\bar{e}(x_{1})q(x_{2}-x_{1})\phi(x_{1})\phi(x_{2})\\
&=&q(x_{2}-x_{1})[\bar{e}(x_{1}),\bar{f}(x_{2})]\phi(x_{1})\phi(x_{2})\\
&=&q(x_{2}-x_{1})x_{1}^{-1}\delta\left(\frac{x_{2}}{x_{1}}\right)\bar{\psi}(x_{2})\phi(x_{1})\phi(x_{2})\\
&=&q(0)x_{1}^{-1}\delta\left(\frac{x_{2}}{x_{1}}\right)\bar{\psi}(x_{2})\phi(x_{2})\phi(x_{2})\\
&=&x_{1}^{-1}\delta\left(\frac{x_{2}}{x_{1}}\right)\psi(x_{2}),
\end{eqnarray*}
\begin{eqnarray*}
&&[\psi(x_{1}), \psi(x_{2})]\\
&=&\bar{\psi}(x_{1})\phi(x_{1})\phi(x_{1})\bar{\psi}(x_{2})\phi(x_{2})\phi(x_{2})-\bar{\psi}(x_{2})\phi(x_{2})\phi(x_{2})\bar{\psi}(x_{1})\phi(x_{1})\phi(x_{1})\\
&=&q(x_{1}-x_{2})^{2}q(x_{2}-x_{1})^{2}\bar{\psi}(x_{1})\bar{\psi}(x_{2})\phi(x_{1})\phi(x_{1})\phi(x_{2})\phi(x_{2})\\
&-&q(x_{2}-x_{1})^{2}q(x_{1}-x_{2})^{2}\bar{\psi}(x_{2})\bar{\psi}(x_{1})\phi(x_{2})\phi(x_{2})\phi(x_{1})\phi(x_{1})\\
&=&q(x_{1}-x_{2})^{2}q(x_{2}-x_{1})^{2}[\bar{\psi}(x_{1}),\bar{\psi}(x_{2})]\phi(x_{1})\phi(x_{1})\phi(x_{2})\phi(x_{2})\\
&=&0,
\end{eqnarray*}
\begin{eqnarray*}
&&\psi(x_{1})e(x_{2})\\
&=&\bar{\psi}(x_{1})\phi(x_{1})\phi(x_{1})\bar{e}(x_{2})\phi(x_{2})\\
&=&q(x_{1}-x_{2})^{2}\bar{\psi}(x_{1})\bar{e}(x_{2})\phi(x_{1})\phi(x_{1})\phi(x_{2})\\
&=&q(x_{1}-x_{2})^{2}\bar{e}(x_{2})\bar{\psi}(x_{1})\phi(x_{1})\phi(x_{1})\phi(x_{2})\\
&=&q(x_{1}-x_{2})^{2}q(x_{1}-x_{2})^{-1}q(x_{2}-x_{1})^{-1}\bar{e}(x_{2})\phi(x_{2})\bar{\psi}(x_{1})\phi(x_{1})\phi(x_{1})\\
&=&q(x_1-x_2)q(x_2-x_1)^{-1}e(x_2)\psi(x_1)\\
&=&g(x_{2}-x_{1})e(x_2)\psi(x_1),
\end{eqnarray*}
\begin{eqnarray*}
&&\psi(x_{1})f(x_{2})\\
&=&\bar{\psi}(x_{1})\phi(x_{1})\phi(x_{1})\bar{f}(x_{2})\phi(x_{2})\\
&=&q(x_{2}-x_{1})^{2}\bar{\psi}(x_{1})\bar{f}(x_{2})\phi(x_{1})\phi(x_{1})\phi(x_{2})\\
&=&q(x_{2}-x_{1})^{2}\bar{f}(x_{2})\bar{\psi}(x_{1})\phi(x_{2})\phi(x_{1})\phi(x_{1})\\
&=&q(x_{2}-x_{1})^{2}q(x_{2}-x_{1})^{-1}q(x_{1}-x_{2})^{-1}\bar{f}(x_{2})\phi(x_{2})\bar{\psi}(x_{1})\phi(x_{1})\phi(x_{1})\\
&=&q(x_2-x_1)q(x_1-x_2)^{-1}f(x_2)\psi(x_1)\\
&=&g(x_{1}-x_{2})f(x_2)\psi(x_1).
\end{eqnarray*}
This shows that $V_{L(\g)}$ is an $\A(h)$-module. Obviously, $\mathbf{1}$ is a vacuum vector.

Next, we prove that $V_{L(\g)}$ is generated by $\bf{1}$ as an $\A(h)$-module.
Let $W$ be the $\A(h)$-submodule of $V_{L(\g)}$ generated by $\bf{1}$.
From Corollary \ref{relation}, we have
\begin{eqnarray*}
&&\phi^{-1}(x_{1})e(x)=q(x_1-x)^{-1}e(x)\phi^{-1}(x_{1}),\\
&&\phi^{-1}(x_{1})f(x)=q(x-x_1)^{-1}f(x)\phi^{-1}(x_{1}),\\
&&\phi^{-1}(x_{1})\psi(x)=q(x_1-x)^{-1}q(x-x_1)^{-1}\psi(x)\phi^{-1}(x_{1}).
\end{eqnarray*}
With $\phi(x)^{-1}\mathbf{1}=\bf{1}$, it then follows that $\phi(x)^{-1}W\subset W((x))$.
Since $$\bar{e}(x)=e(x)\phi(x)^{-1},\  \   \bar{f}(x)=f(x)\phi(x)^{-1},\   \   \bar{\psi}(x)=\psi(x)\phi(x)^{-1}\phi(x)^{-1},$$
it follows that $W$ is an $L(\g)$-submodule of $V_{L(\g)}$.
Consequently, $W=V_{L(\g)}$. This implies that $V_{L(\g)}$ is a vacuum $\A(h)$-module.

On $V_{L(\g)}$, we have
$$[\D, \bar{e}(x)]=\frac{d}{dx}\bar{e}(x),\  \  [\D, \bar{f}(x)]=\frac{d}{dx}\bar{f}(x),\  \
[\D, \bar{\psi}(x)]=\frac{d}{dx}\bar{\psi}(x).$$
Then (\ref{eq:3.6.}) follows immediately.
\end{proof}

Furthermore, we have:

\begin{prop}\label{basis}
 View $V_{L(\g)}$ as a vacuum $\A(h)$-module. For $n\in \N$, set
$$E_n={\rm span}\{u^{(1)}(m_{1})\cdots u^{(r)}(m_{r})\mathbf{1}\ |\  0\leq r\leq n,\  u^{(i)}\in U, \ m_{i}\in \Z\}. $$
Then $E_n$ has a basis consisting of vectors
\begin{eqnarray}
e(-m_{1})\cdots e(-m_{r})f(-n_{1})\cdots f(-n_{s})\psi(-k_{1})\cdots \psi(-k_{l})\mathbf{1},\label{eq:3.5.}
\end{eqnarray}
  where $r,s,l\geq 0$ with $r+s+l\leq n$, and
   $$m_{1}\geq\cdots\geq m_{r}\geq 1,\   \    n_{1}\geq\cdots\geq n_{s}\geq 1, \   \   k_{1}\geq\cdots\geq k_{l}\geq 1.$$
\end{prop}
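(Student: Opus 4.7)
The plan is to derive linear independence from the PBW theorem for the Heisenberg-type loop Lie algebra $L(\g)$. Spanning is already supplied by Lemma \ref{span2}, since $h(0) = q(0)q(0)^{-1} = 1$ puts us in the $g(0) = 1$ case. For linear independence I would use the standard weight grading $V_{L(\g)} = \bigoplus_{N \ge 0}(V_{L(\g)})_{(N)}$ in which $\bar a(-m)$ raises weight by $m$, so that the $\bar{}$-monomials with the stated orderings form a basis of $V_{L(\g)}$.

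The main technical step is a weight analysis of $\phi(t) = \sum_{k \ge 0}\phi_k t^k$. Iterating the intertwining relations of Corollary \ref{relation} (each $Q^{(i)}(t-x_i)$ being a scalar formal series that commutes past all operators) yields
$$\phi(t)\bar a^{(1)}(x_1)\cdots \bar a^{(r)}(x_r)\mathbf{1} = \prod_{i=1}^r Q^{(i)}(t-x_i)\cdot\bar a^{(1)}(x_1)\cdots\bar a^{(r)}(x_r)\mathbf{1},$$
where $Q^{(i)}(s)$ is $q(s)$, $q(-s)$, or $q(s)q(-s)$ according to whether $\bar a^{(i)}$ is $\bar e$, $\bar f$, or $\bar\psi$. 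Since each $Q^{(i)}(0) = 1$, extracting the coefficient of $\prod_i x_i^{m_i - 1}$ and decomposing by weight shows that each $\phi_k$ sends $(V_{L(\g)})_{(N)}$ into $\bigoplus_{M \le N}(V_{L(\g)})_{(M)}$ and that the weight-preserving component of $\phi_0$ is the identity.

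From Proposition \ref{map}, the modes decompose as $e(-m) = \sum_{k \ge 0}\bar e(k-m)\phi_k$, and similarly for $f(-m)$, while $\psi(-m) = \sum_{k \ge 0}\bar\psi(k-m)(\phi^2)_k$ with $(\phi^2)_0 = \phi_0^2$, whose weight-preserving part is again the identity. Since $\bar e(k-m)$ shifts weight by $m-k \le m$, with equality only at $k = 0$, the weight analysis of $\phi_k$ shows that for any $v \in (V_{L(\g)})_{(w)}$ the weight-$(w+m)$ component of $e(-m)v$ is exactly $\bar e(-m)v$, and analogously for $f(-m)v$ and $\psi(-m)v$. Iterating along a monomial yields
\begin{align*}
&e(-m_1)\cdots e(-m_r)f(-n_1)\cdots f(-n_s)\psi(-k_1)\cdots\psi(-k_l)\mathbf{1}\\
&\qquad = \bar e(-m_1)\cdots \bar e(-m_r)\bar f(-n_1)\cdots\bar f(-n_s)\bar\psi(-k_1)\cdots\bar\psi(-k_l)\mathbf{1} + (\text{strictly lower weight}).
\end{align*}

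The leading terms on the right-hand side are distinct PBW basis vectors of $V_{L(\g)}$ by the orderings in the statement. A standard top-weight argument then delivers linear independence: in any nontrivial linear combination of the vectors in (\ref{eq:3.5.}), the component of highest weight would give a nontrivial linear relation among PBW basis vectors of $V_{L(\g)}$, contradicting the PBW theorem for $L(\g)$. The main obstacle in this plan is establishing rigorously the weight-non-increasing property of the $\phi_k$ from the intertwining relations of Corollary \ref{relation}; once this is in hand, the leading-term bookkeeping is routine.
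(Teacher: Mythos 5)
Your proposal is correct and follows essentially the same route as the paper: both arguments rest on showing that the components $\phi_k$ of the intertwining operator preserve the weight filtration of $V_{L(\g)}$ with $\phi_0$ acting as the identity modulo lower weight, and then conclude by a triangularity (leading-term) argument against the PBW basis of barred monomials. The step you flag as the main obstacle is exactly what the paper establishes, via the component-form commutation relations $\phi_i\bar{e}(m)=\sum q_r\binom{r}{j}(-1)^j\bar{e}(m+j)\phi_{i-r+j}$ and induction, which is equivalent to your generating-function computation.
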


\begin{proof} From Lemma \ref{span2}, we see that $E_n$ is spanned by the vectors in (\ref{eq:3.5.}), so
 it remains to establish the linear independence.

 For convenience,  simply denote $V_{L(\g)}$ by $V$  in this proof.
 Recall that $V=\coprod_{n\in \N}V_{(n)}$ is an $\N$-graded $L(\g)$-module with $V_{(0)}=\C\mathbf{1}$ and $V_{(1)}=U$.
 For $n\in \Z$, set $\bar{F}_n=\oplus_{m\le n} V_{(m)}$. We have
 $\bar{F}_{n}=0$ for $n<0$, $\bar{F}_{0}=V_{(0)}=\C {\bf 1}$, and
\begin{eqnarray}\label{graded-module}
a(m)\bar{F}_{n}\subset \bar{F}_{n-m}\   \   \  \mbox{ for }a\in \{ \bar{e},\bar{f},\bar{\psi}\},\ m,n\in \Z.
\end{eqnarray}
 We see that the following vectors form a basis of $\bar{F}_n$:
 \begin{eqnarray}\label{ebasis-bar}
 \bar{e}(-m_{1})\cdots \bar{e}(-m_{r})\bar{f}(-n_{1})\cdots \bar{f}(-n_{s})\bar{\psi}(-k_{1})\cdots
 \bar{\psi}(-k_{l})\mathbf{1}\label{eq:3.7.}
\end{eqnarray}
for $r,s,l\geq 0$ with $\sum m_i+\sum n_j+\sum k_s\leq n$, and for  $m_i,n_j,k_t\in \Z$  with
$$m_{1}\geq\cdots\geq m_{r}\geq 1, \   \   n_{1}\geq\cdots\geq n_{s}\geq 1, \   \   k_{1}\geq\cdots\geq k_{l}\geq 1.$$
Write $\phi(t)=\sum_{i\geq 0}\phi_{i}t^{i}$ (with $\phi_i\in \End_{\C}V$) and $q(t)=\sum_{i\ge 0}q_it^i$ (with $q_i\in \C$).
We have
\begin{eqnarray*}
&&\phi_{i}\bar{e}(m)=\sum_{r\ge 0}\sum_{j\ge 0, r-j\le i} q_{r} \binom{r}{j}(-1)^{j}\bar{e}(m+j)\phi_{i-r+j},\\
&&\phi_{i}\bar{f}(m)=\sum_{r,j\ge 0,r-j\le i}q_{r}\binom{r}{j}(-1)^{r+j}\bar{f}(m+j)\phi_{i-r+j},\\
&&\phi_{i}\bar{\psi}(m)=\sum_{r,r',j,j'\ge 0, r+r'-j-j'\le i}q_rq_{r'}\binom{r}{j}\binom{r'}{j'}(-1)^{j+r'+j'}\bar{\psi}(m+j+j')\phi_{i-r+j-r'-j'}.
\end{eqnarray*}
 Notice that $\phi(t)\bar{F}_{0}\subset \bar{F}_{0}$ as $\phi(t){\bf 1}={\bf 1}$.
It then follows from induction and (\ref{graded-module}) that
\begin{eqnarray}
\phi_{i}\bar{F}_{n}\subset \bar{F}_{n}\   \   \   \mbox{ for }i\in \N,\ n\in \Z.
\end{eqnarray}
On the other hand, using the commutation relations above with $i=0$ (and induction) we get
\begin{eqnarray}
\phi_0 (w)\equiv w\ ({\rm mod} \bar{F}_{n-1})\   \   \  \mbox{  for }w\in \bar{F}_{n},\ n\in \Z,
\end{eqnarray}
noticing that $q_0=q(0)=1$.

From the relations $e(x)=\bar{e}(x)\phi(x),\  f(x)=\bar{f}(x)\phi(x), \  \psi(x)=\bar{\psi}(x)\phi(x)\phi(x)$,
  we have
  $$e(m)=\sum_{i\geq 0}\bar{e}(m+i)\phi_{i},\   \   \
  f(m)=\sum_{i\geq 0}\bar{f}(m+i)\phi_{i},$$
  $$\psi(m)=\sum_{i,j\ge 0}\bar{\psi}(i+j+m)\phi_{i}\phi_{j}$$
  for $m\in \Z$. It follows that $u(m)\bar{F}_{n}\subset \bar{F}_{n-m}$ for $u\in \B= \{e,f,\psi\},\  m,n\in \Z$.

 Now, define a linear endomorphism $\sigma$ of $V$, which sends the basis vector in  (\ref{ebasis-bar}) to
 the corresponding vector
  \begin{eqnarray*}
 e(-m_{1})\cdots e(-m_{r})f(-n_{1})\cdots f(-n_{s})\psi(-k_{1})\cdots
 \psi(-k_{l})\mathbf{1}.
\end{eqnarray*}
 We have $\sigma(\bar{F}_{n})\subset \bar{F}_{n}$ for $n\in \N$. and
 $\sigma(w)\equiv w\ ({\rm mod} \bar{F}_{n-1})$ for $w\in \bar{F}_{n}$.
  Then it follows that $\sigma$ is bijective.
 Consequently, those vectors in (\ref{eq:3.5.}) are linearly  independent.
 \end{proof}

 Recall that $V_{\A(h)}$ is universal. As an immediate consequence of Proposition \ref{basis}, we have:

 \begin{cor}\label{basisV}
 For $n\in \N$, set
$$E_n={\rm span}\{u^{(1)}(m_{1})\cdots u^{(r)}(m_{r})\mathbf{1}\ |\  0\leq r\leq n, \  u^{(i)}\in U, \  m_{i}\in \Z\}
\subset V_{\A(h)}.$$
 Then $E_n$ for $n\geq 0$ form an ascending filtration of $V_{\A(h)}$, and for
$n\in \N$, $E_n$ has a basis consisting of the vectors
\begin{eqnarray}
e(-m_{1})\cdots e(-m_{r})f(-n_{1})\cdots f(-n_{s})\psi(-k_{1})\cdots \psi(-k_{l})\mathbf{1},
\end{eqnarray}
  where  $r,s,l\geq 0$ with $r+s+l\leq n$, and  $m_i,n_j,k_t\in \Z_{+}$ with
  $$m_{1}\geq\cdots\geq m_{r}\geq 1,\   n_{1}\geq\cdots\geq n_{s}\geq 1,\   k_{1}\geq\cdots\geq k_{l}\geq 1.$$
\end{cor}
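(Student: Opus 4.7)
The plan is to deduce the corollary directly from Proposition \ref{basis} by exploiting the universality of $V_{\A(h)}$. Lemma \ref{span2}, applied to $W = V_{\A(h)}$ with its canonical vacuum vector $\mathbf{1}$, already gives two of the three claims for free: the $E_n$ form an ascending filtration of $V_{\A(h)}$, and each $E_n$ is spanned by the ordered monomials displayed in the statement (using that $g(0) = h(0) = 1$ in the relevant case, and the analogous super ordering otherwise). So the only remaining issue is linear independence of these spanning vectors inside $V_{\A(h)}$.

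For linear independence, I would invoke the universal property established just before the theorem in Section 3: for any vacuum $\A(h)$-module $(W,w_0)$ on which there exists a derivation-type operator $\D$ with $\D w_0 = 0$ and $[\D, u(x)] = \frac{d}{dx} u(x)$ for $u \in \B$, there is a unique $\A(h)$-module homomorphism $V_{\A(h)} \to W$ sending $\mathbf{1} \mapsto w_0$. Proposition \ref{map} verifies precisely these hypotheses for $W = V_{L(\g)}$ (in the factorizable case $h(z) = q(-z)q(z)^{-1}$, together with its vertex-superalgebra counterpart for $h(z) = -q(-z)q(z)^{-1}$), so we obtain a canonical $\A(h)$-module map
\begin{equation*}
\pi : V_{\A(h)} \longrightarrow V_{L(\g)}, \qquad \mathbf{1} \longmapsto \mathbf{1}.
\end{equation*}

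Because $\pi$ is an $\A(h)$-module homomorphism that preserves the vacuum, it sends each PBW monomial $e(-m_1)\cdots e(-m_r) f(-n_1)\cdots f(-n_s) \psi(-k_1)\cdots \psi(-k_l)\mathbf{1}$ in $V_{\A(h)}$ to the identically-labelled vector in $V_{L(\g)}$. By Proposition \ref{basis}, the image family is linearly independent in $V_{L(\g)}$, so the original family must be linearly independent in $V_{\A(h)}$. Combined with the spanning statement from Lemma \ref{span2}, this yields the asserted basis of $E_n$ for every $n \in \N$.

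The main subtlety I anticipate is bookkeeping rather than substance: one must make sure that $\pi$ actually maps the monomial in $V_{\A(h)}$ to the monomial built from the operators $e(x), f(x), \psi(x)$ constructed via the dressing $\phi(x)$ in Proposition \ref{map}, rather than the bare currents $\bar{e}(x), \bar{f}(x), \bar{\psi}(x)$; this is automatic from the fact that $\pi$ is $\A(h)$-linear and the target acts through the dressed operators. The super case $h(z) = -q(-z)q(z)^{-1}$ is handled in the same way, with the vertex superalgebra analogue of Proposition \ref{basis} supplying the linear independence. No further computation should be needed.
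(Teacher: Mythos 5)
Your proposal is correct and is essentially the argument the paper intends: the corollary is stated as an ``immediate consequence'' of Proposition \ref{basis} via the universality of $V_{\A(h)}$, i.e.\ spanning comes from Lemma \ref{span2} and linear independence is pulled back along the canonical $\A(h)$-module map $V_{\A(h)}\to V_{L(\g)}$ furnished by the universal property (whose $\D$-compatibility hypothesis Proposition \ref{map} verifies). You have merely written out the details the paper leaves implicit, including the correct observation that the map sends each PBW monomial to the identically labelled vector acted on through the dressed operators.
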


Furthermore, we have:

 \begin{thm}\label{qva-main}
 Weak quantum vertex algebra $V_{\A(h)}$ is a non-degenerate quantum vertex algebra.
 \end{thm}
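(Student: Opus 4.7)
By Proposition~\ref{pnondeg-qva}, it suffices to prove that $V_{\A(h)}$ is non-degenerate in the sense of Etingof--Kazhdan, i.e., that the evaluation map $Z_n$ is injective for every $n\ge 1$. The plan is to transfer this question to the auxiliary vertex (super)algebra $V_{L(\g)}$ constructed in this section, and then reduce it to non-degeneracy of $V_{L(\g)}$ itself via the invertible ``$\phi$-twist'' that relates the two sets of generating fields.

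First, by the universal property of $V_{\A(h)}$ together with Proposition~\ref{map}, there is a canonical $\A(h)$-module homomorphism $\pi\colon V_{\A(h)}\to V_{L(\g)}$ sending $\mathbf{1}$ to $\mathbf{1}$; comparing Corollary~\ref{basisV} with Proposition~\ref{basis} shows that $\pi$ is a linear isomorphism. Consequently $V_{L(\g)}$ with the $V_{\A(h)}$-module structure of Proposition~\ref{map} is a faithful $V_{\A(h)}$-module, and the injectivity of $Z_n$ for $V_{\A(h)}$ is equivalent to that of the corresponding map on $V_{L(\g)}$ built from the $V_{\A(h)}$-module operators $Y^{\mathrm{mod}}$, whose action on the generators $e(-1)\mathbf{1}$, $f(-1)\mathbf{1}$, $\psi(-1)\mathbf{1}$ is $\bar{e}(x)\phi(x)$, $\bar{f}(x)\phi(x)$, $\bar{\psi}(x)\phi(x)^{2}$ respectively.

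Next, using the intertwining relations of Corollary~\ref{relation}, the identity $\phi(t)\mathbf{1}=\mathbf{1}$, and the commutativity $\phi(x)\phi(t)=\phi(t)\phi(x)$, I would push all $\phi$-factors in an iterated product $Y^{\mathrm{mod}}(u^{(1)},x_1)\cdots Y^{\mathrm{mod}}(u^{(n)},x_n)\mathbf{1}$ to the far right, where they collapse on $\mathbf{1}$. For generator inputs this produces an invertible scalar factor (a product of terms $q(\pm(x_i-x_j))\in\C[[x_i,x_j]]^{\times}$) times the corresponding iterated product of the fields $\bar{e}(x),\bar{f}(x),\bar{\psi}(x)$ applied to $\mathbf{1}$ in the vertex (super)algebra $V_{L(\g)}$. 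For a general PBW basis element $v\in V_{\A(h)}$, the operator $Y^{\mathrm{mod}}(v,x)$ is a normal-ordered product of such twisted generator fields, and a triangular argument along the filtration of Corollary~\ref{basisV} reduces matters to the same conclusion. Since the prefactor is invertible, injectivity of $Z_n$ for $V_{\A(h)}$ is equivalent to that of the corresponding evaluation map for the vertex (super)algebra $V_{L(\g)}$.

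Finally, $V_{L(\g)}$ is non-degenerate as a vertex (super)algebra by a standard argument: it is the vacuum module of the loop Lie (super)algebra of the three-dimensional Heisenberg Lie (super)algebra, carries a natural $\N$-grading with one-dimensional degree-zero piece $\C\mathbf{1}$, and has the PBW basis of Proposition~\ref{basis}; a leading-term calculation with respect to this grading shows that the iterated products $Y(\bar{u}^{(1)},x_1)\cdots Y(\bar{u}^{(n)},x_n)\mathbf{1}$ are $\C((x_1))\cdots((x_n))$-linearly independent. The main obstacle will be the combinatorial bookkeeping in the middle step: for PBW basis elements $v$ of weight greater than one, rewriting $Y^{\mathrm{mod}}(v,x)$ cleanly, up to an invertible prefactor and terms of strictly lower filtration, as a normal-ordered product of untwisted $\bar{e},\bar{f},\bar{\psi}$-fields on $V_{L(\g)}$ requires careful tracking of how the $\phi$-operators redistribute through the normal ordering and across the separate variables $x_1,\ldots,x_n$.
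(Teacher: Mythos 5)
Your route is genuinely different from the paper's. The paper does not untwist the fields back to $V_{L(\g)}$ at all: it takes the filtration $\{E_n\}$ of Corollary \ref{basisV}, notes (citing \cite{Li-const}, Proposition 3.15) that it is a good filtration, and passes to the associated graded nonlocal vertex algebra $\mathrm{Gr}_E(V_{\A(h)})$. In the associated graded, the $\psi$-term of $[e(x_1),f(x_2)]$ drops (since $\psi$ has filtration degree $1<2$), so $\tilde e$ and $\tilde f$ commute and $\mathrm{Gr}_E(V_{\A(h)})$ is a Zamolodchikov--Faddeev--type algebra with a PBW basis; non-degeneracy then comes from \cite{KL} (Proposition 4.11), and non-degeneracy of $V_{\A(h)}$ itself from \cite{Li-const} (Proposition 3.14), which says that a nonlocal vertex algebra with a good filtration whose associated graded is non-degenerate is non-degenerate. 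Your approach instead identifies $V_{\A(h)}$ with $V_{L(\g)}$ as a module via $\pi$ and tries to strip off the $\phi$-twist.

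Two steps of your plan need substantially more than you give them, and both are precisely what the paper's citations package up. First, the ``triangular argument'': for non-generator inputs $v$, $Y^{\mathrm{mod}}(v,x)$ is only equal to an untwisted normal-ordered field times an invertible prefactor \emph{modulo lower filtration}, so the transformation relating your two $Z_n$-maps is not an invertible $\C((x_1))\cdots((x_n))$-scalar but a filtered map that is invertible only on the associated graded level. Turning that into injectivity of $Z_n$ is exactly the content of \cite{Li-const}, Proposition 3.14 -- you would end up re-proving it, and at that point the untwisting buys you nothing over applying it directly to $V_{\A(h)}$ as the paper does. Second, non-degeneracy of $V_{L(\g)}$ is not an off-the-shelf fact: $V_{L(\g)}$ is not irreducible over itself (the ideal generated by $\bar\psi$ is proper), so the irreducibility criterion from \cite{Li-nonlocal} does not apply, and your ``leading-term calculation'' is again the same PBW/associated-graded argument as \cite{KL}, Proposition 4.11 -- now for a target with the nonabelian bracket $[\bar e,\bar f]=\bar\psi$ still present, which the paper's associated graded conveniently kills. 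So the approach is not wrong, but as written the crux is deferred twice; the paper's associated-graded route is the efficient way to discharge both debts at once.
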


 \begin{proof} Recall that  for $n\in \N$,
 $$E_n={\rm span}\{u^{(1)}(m_{1})\cdots u^{(r)}(m_{r})\mathbf{1}\  |\   0\leq r\leq n,\ u^{(i)}\in U,\  m_{i}\in \Z\}. $$
 From \cite{Li-const} (Proposition 3.15),
 $\{E_{n}\}_{n\in\N}$ is an ascending filtration of $V_{\A(h)}$ such that $\mathbf{1}\in E_0$ and
 $u_{k}E_n\subseteq E_{m+n}$ for $u\in E_m, k\in \Z, m,n\in \N$. Consider the associated graded vector space
 $\mbox{Gr}_{E}(V_{\A(h)})=\coprod_{n\in\N}(E_{n}/E_{n-1})$ with $E_{-1}=0$.
 From \cite{Li-const} (Lemma 3.13),  $\mbox{Gr}_{E}(V_{\A(h)})$ is a $\Z$-graded nonlocal vertex algebra. Notice that $e=e(-1)\mathbf{1}\in E_{1},\ f=f(-1)\mathbf{1}\in E_{1},
 \ \psi=\psi(-1)\mathbf{1}\in E_{1}$. Set $\tilde{e}=e+E_{0}, \ \tilde{f}=f+E_{0},\  \tilde{\psi}=\psi+E_{0}\in E_{1}/E_{0}$. Since $e,f,\psi$ generate $V_{\A(h)}$ as a nonlocal vertex algebra, $\tilde{e}, \tilde{f}, \tilde{\psi}$ generate $\mbox{Gr}_{E}(V_{\A(h)})$ as a nonlocal vertex algebra, and we have
 \begin{eqnarray*}
 &&Y(\tilde{e},x_{1})Y(\tilde{e},x_{2})=h(x_2-x_1)Y(\tilde{e},x_{2})Y(\tilde{e},x_{1}),\\
 &&Y(\tilde{f},x_{1})Y(\tilde{f},x_{2})=h(x_1-x_2)Y(\tilde{f},x_{2})Y(\tilde{f},x_{1}),\\
&&Y(\tilde{\psi},x_{1})Y(\tilde{e},x_{2})=h(x_2-x_1)Y(\tilde{e},x_{2})Y(\tilde{\psi},x_{1}),\\
&&  Y(\tilde{\psi},x_{1})Y(\tilde{f},x_{2})=h(x_1-x_2)Y(\tilde{f},x_{2})Y(\tilde{\psi},x_{1}),
  \end{eqnarray*}
  $$Y(\tilde{e},x_{1})Y(\tilde{f},x_{2})=Y(\tilde{f},x_{2})Y(\tilde{e},x_{1}),\  \
  Y(\tilde{\psi},x_{1})Y(\tilde{\psi},x_{2})=Y(\tilde{\psi},x_{2})Y(\tilde{\psi},x_{1}).$$
  From Corollary \ref{basisV}, for each $n\in \N$, $E_{n}/E_{n-1}$ has a basis consisting of the vectors
 \begin{eqnarray*}
\tilde{e}(-m_{1})\cdots \tilde{e}(-m_{r})\tilde{f}(-n_{1})\cdots \tilde{f}(-n_{s})\tilde{\psi}(-k_{1})\cdots \tilde{\psi}(-k_{l})\mathbf{1},
\end{eqnarray*}
 where $r,s,l\geq 0$ with $r+s+l=n$, and
 $$m_{1}\geq\cdots\geq m_{r}\geq 1,\   n_{1}\geq\cdots\geq n_{s}\geq 1, \   k_{1}\geq\cdots\geq k_{l}\geq 1.$$
   From  \cite{KL} (Proposition 4.11), $\mbox{Gr}_{E}(V_{\A(h)})$ is nondegenerate. Then
  from  \cite{Li-const} (Proposition 3.14), $V_{\A(h)}$ is nondegenerate.
  Therefore,  $V_{\A(h)}$ is a nondegenerate  quantum vertex algebra.
 \end{proof}

\begin{rmk}
{\em Note that  the associated graded vertex algebra of  $V_{\A(h)}$ is isomorphic to $V_{L(\g)}$,
which is a graded vertex algebra,  but $V_{\A(h)}$ itself is not a graded nonlocal vertex algebra.}
\end{rmk}

Next, we study the case with  $h(z)=-q(-z)q(z)^{-1}$.
In this case,  we make use of a concrete vertex superalgebra instead of a vertex algebra.
Let $\mathfrak{g}^{(s)}=\mathfrak{g}^{(s)}_{0}\oplus \mathfrak{g}^{(s)}_{1}$ be the Lie superalgebra with
 even part $\mathfrak{g}^{(s)}_{0}=\C \tilde{\psi}$ and odd part
 $\mathfrak{g}^{(s)}_{1}=\C \tilde{e}\oplus  \C\tilde{f}$, and with relations
 \begin{eqnarray}
 [\tilde{e}, \tilde{e}]_{+}=0=[\tilde{f},  \tilde{f}]_{+},\   \    [\tilde{e},\tilde{f}]_{+}=\tilde{\psi},\    \
  [\tilde{\psi}, \tilde{e}]=0=[\tilde{\psi},  \tilde{f}].
  \end{eqnarray}
(It is straightforward to check that this indeed defines a Lie superalgebra.)
We then have a loop Lie superalgebra $L(\g^{(s)})=\mathfrak{g^{(s)}}\otimes \C[t,t^{-1}]$.

Form an induced $L(\g^{(s)})$-module
$V_{L(\g^{(s)})}=U(L(\g^{(s)}))\otimes_{U(\mathfrak{g^{(s)}\otimes\C[t]})}\C$,
where $\C$ is considered as a trivial module for $\mathfrak{g^{(s)}}\otimes\C[t]$.
   Then $V_{L(\g^{(s)})}$ has a basis
$$\bar{e}(-m_{1})\cdots\bar{e}(-m_{r})\bar{f}(-n_{1})\cdots\bar{f}(-n_{s})\bar{\psi}(-k_{1})\cdots\bar{\psi}(-k_{l})\mathbf{1}$$
for $r,s,l\geq 0$ with $m_{1}>\cdots>m_{r}\geq 1, n_{1}>\cdots >n_{s}\geq 1, k_{1}\geq\cdots\geq k_{l}\geq 1$.
Furthermore, $V_{L(\g^{(s)})}$ has a canonical vertex superalgebra structure.
By using this vertex superalgebra, we obtain our main results which are summarized as follows:

 \begin{prop}\label{basisV'}
Assume $h(z)=-q(-z)q(z)^{-1}$ with $q(z)\in \C[[z]]$ such that $q(0)=1$.  For  $n\in \N$, set
$$E_n={\rm span}\{u^{(1)}(m_{1})\cdots u^{(r)}(m_{r})\mathbf{1}\ |\  0\leq r\leq n, \  u^{(i)}\in \{e,f,\psi\}, \  m_{i}\in \Z\}.$$
Then $E_n$ for $n\geq 0$ form an ascending filtration of $V_{\A(h)}$, and for each nonnegative integer $n$, $E_n$ has a basis consisting of the vectors
\begin{eqnarray}
e(-m_{1})\cdots e(-m_{r})f(-n_{1})\cdots f(-n_{s})\psi(-k_{1})\cdots \psi(-k_{l})\mathbf{1},
\end{eqnarray}
where $r,s,l\geq 0$ with $r+s+l\leq n$, and
$$m_{1}>\cdots> m_{r}\geq 1,\  \   n_{1}>\cdots> n_{s}\geq 1, \   \   k_{1}\geq\cdots\geq k_{l}\geq 1.$$
Furthermore,  the weak quantum vertex algebra $V_{\A(h)}$ is a nondegenerate quantum vertex algebra.
\end{prop}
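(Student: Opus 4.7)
The plan is to mirror the treatment of the case $h(z)=q(-z)q(z)^{-1}$ step by step, replacing the Heisenberg Lie algebra $\mathfrak{g}$ by the Lie superalgebra $\mathfrak{g}^{(s)}$ and the vertex algebra $V_{L(\mathfrak{g})}$ by the vertex superalgebra $V_{L(\mathfrak{g}^{(s)})}$ introduced just above the proposition. The one essential new ingredient is that the factor $-1$ in $h(z)=-q(-z)q(z)^{-1}$ will be produced by the odd-odd anticommutations of $\tilde e$ and $\tilde f$ in $V_{L(\mathfrak{g}^{(s)})}$, which also forces the strict inequalities on the $e$- and $f$-exponents in the basis.

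First I would establish a super analog of Lemma \ref{pseudo}: for any $p_1(t),p_2(t)\in\C[[t]]$ there is a unique $\phi(t)\in\Hom_{\C}(V_{L(\mathfrak{g}^{(s)})},V_{L(\mathfrak{g}^{(s)})}\otimes\C[[t]])$ with $\phi(t)\mathbf{1}=\mathbf{1}$ and the same commutation rules with $\tilde e(x),\tilde f(x),\tilde\psi(x)$ as in the bosonic case. The construction is identical: $\C[[t]]$ is a purely even vertex algebra, so $V_{L(\mathfrak{g}^{(s)})}\otimes\C[[t]]$ is a tensor product vertex superalgebra; one then checks that $\tilde e\otimes p_1(t)$, $\tilde f\otimes p_2(t)$, $\tilde\psi\otimes p_1(t)p_2(t)$ satisfy the defining (super)relations of $L(\mathfrak{g}^{(s)})$ so that universality yields a vertex superalgebra homomorphism $\phi$. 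Specializing $p_1(t)=q(t)$, $p_2(t)=q(-t)$ gives the analog of Corollary \ref{relation}.

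Next I would define $e(x)=\tilde e(x)\phi(x)$, $f(x)=\tilde f(x)\phi(x)$, $\psi(x)=\tilde\psi(x)\phi(x)\phi(x)$ on $V_{L(\mathfrak{g}^{(s)})}$ and repeat the calculations of Proposition \ref{map}. Because $\tilde e(x_1)\tilde e(x_2)=-\tilde e(x_2)\tilde e(x_1)$ and $\tilde f(x_1)\tilde f(x_2)=-\tilde f(x_2)\tilde f(x_1)$, the same manipulations give
\begin{eqnarray*}
e(x_1)e(x_2)&=&-q(x_1-x_2)q(x_2-x_1)^{-1}e(x_2)e(x_1)=h(x_2-x_1)e(x_2)e(x_1),\\
f(x_1)f(x_2)&=&h(x_1-x_2)f(x_2)f(x_1),
\end{eqnarray*}
while the $\psi$-$e$, $\psi$-$f$, $\psi$-$\psi$ identities and the bracket $[e(x_1),f(x_2)]=x_1^{-1}\delta(x_2/x_1)\psi(x_2)$ come out as before, since $\tilde\psi$ is even and the evaluation $q(0)=1$ still kills the twist factor in the delta-function term. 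Exactly as in Proposition \ref{map}, writing $\bar a(x)=a(x)\phi(x)^{-1}$ etc.\ shows that $V_{L(\mathfrak{g}^{(s)})}$ is generated by $\mathbf{1}$ as an $\A(h)$-module, hence is a vacuum $\A(h)$-module, and $\D$ differentiates each of $e(x),f(x),\psi(x)$.

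To obtain the basis, I would then mimic Proposition \ref{basis} using the PBW basis of $V_{L(\mathfrak{g}^{(s)})}$ stated in the excerpt (with strict decrease on $m_i$ and $n_j$ and weak decrease on $k_t$). With the natural $\N$-grading $\deg(a\otimes t^n)=-n$ and $\bar F_n=\oplus_{m\le n}V_{(m)}$, the same commutation expansions of $\phi_i$ with $\tilde e(m),\tilde f(m),\tilde\psi(m)$ show $\phi_i\bar F_n\subset\bar F_n$ and $\phi_0\equiv\mathrm{id}\ (\mathrm{mod}\ \bar F_{n-1})$, so the triangular endomorphism $\sigma$ sending each tilded basis vector to the corresponding untilded vector is bijective. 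This proves linear independence of the claimed vectors in $V_{L(\mathfrak{g}^{(s)})}$; by universality of $V_{\A(h)}$ the same vectors are linearly independent in $V_{\A(h)}$, and Lemma \ref{span2} (the $g(0)=-1$ case) provides the spanning statement for $E_n$.

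Finally, non-degeneracy is obtained exactly as in Theorem \ref{qva-main}. The filtration $\{E_n\}$ satisfies $u_kE_n\subset E_{m+n}$ for $u\in E_m$, so $\mathrm{Gr}_E(V_{\A(h)})$ is a $\Z$-graded nonlocal vertex algebra, and passing to the associated graded cancels the $h$-factors (their constant term is $\pm1$ and higher terms raise filtration), yielding the super-commutative relations that identify $\mathrm{Gr}_E(V_{\A(h)})\cong V_{L(\mathfrak{g}^{(s)})}$ as a vertex superalgebra via the basis above. Non-degeneracy of $V_{L(\mathfrak{g}^{(s)})}$ (by the super analog of \cite{KL}, Proposition~4.11, which applies to free-field/Clifford-type vertex superalgebras) and \cite{Li-const}, Proposition~3.14, then lift to non-degeneracy of $V_{\A(h)}$, so by Proposition \ref{pnondeg-qva} it is a quantum vertex algebra. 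The main point to be careful about is the super sign bookkeeping in step two—in particular verifying that the overall sign produced by moving $\tilde e(x_1)$ past $\tilde e(x_2)$ together with the $\phi$-twist factor $q(x_1-x_2)q(x_2-x_1)^{-1}$ assembles exactly to $h(x_2-x_1)=-q(x_1-x_2)q(x_2-x_1)^{-1}$, and similarly for $\tilde f$; everything else runs in parallel with the bosonic case.
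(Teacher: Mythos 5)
Your overall strategy is exactly the one the paper intends (the paper itself only gestures at this case, saying the results are obtained ``by using this vertex superalgebra''), and most of your steps --- the super analog of Lemma \ref{pseudo}, the $e$--$e$ and $f$--$f$ relations, the triangular endomorphism $\sigma$ for linear independence, and the nondegeneracy argument via $\mathrm{Gr}_E$ --- do go through. However, there is a genuine sign error at the crux of your second step. With the symmetric specialization $p_1(t)=q(t)$, $p_2(t)=q(-t)$ you have $\phi(x_1)\tilde f(x_2)=q(x_2-x_1)\tilde f(x_2)\phi(x_1)$ and $\phi(x_2)\tilde e(x_1)=q(x_2-x_1)\tilde e(x_1)\phi(x_2)$, hence
$$e(x_1)f(x_2)-f(x_2)e(x_1)=q(x_2-x_1)\bigl(\tilde e(x_1)\tilde f(x_2)-\tilde f(x_2)\tilde e(x_1)\bigr)\phi(x_1)\phi(x_2).$$
Since $\tilde e$ and $\tilde f$ are both odd, the superalgebra controls their \emph{anticommutator}, not their commutator, so the right-hand side is not a delta function; what your twist actually produces is $e(x_1)f(x_2)+f(x_2)e(x_1)=x_1^{-1}\delta(x_2/x_1)\psi(x_2)$, which is a module for a different algebra, not for $\A(h)$ as defined (where $[e(z),f(w)]$ is an ordinary commutator). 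Your justification (``$\tilde\psi$ is even and $q(0)=1$ kills the twist factor'') does not address this: the obstruction sits in the odd--odd pair $\tilde e,\tilde f$, not in $\tilde\psi$.

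The repair is an asymmetric, Klein-type sign in the twist: take for instance $p_1(t)=-q(t)$ and $p_2(t)=q(-t)$. This leaves $p_1(t)p_1(-t)^{-1}=q(t)q(-t)^{-1}$ unchanged, so the $e$--$e$, $f$--$f$, $\psi$--$e$, $\psi$--$f$ relations still acquire exactly the factor $h(x_2-x_1)=-q(x_1-x_2)q(x_2-x_1)^{-1}$ (resp.\ $h(x_1-x_2)$), the overall $-1$ coming from the odd--odd transposition as you describe. But now $f(x_2)e(x_1)=-q(x_2-x_1)\tilde f(x_2)\tilde e(x_1)\phi(x_1)\phi(x_2)$ while $e(x_1)f(x_2)=q(x_2-x_1)\tilde e(x_1)\tilde f(x_2)\phi(x_1)\phi(x_2)$, so the commutator of $e,f$ collapses onto the anticommutator of $\tilde e,\tilde f$ and gives $x_1^{-1}\delta(x_2/x_1)\psi(x_2)$ after setting $q(0)=1$ on the diagonal. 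With this correction the remainder of your argument (spanning from Lemma \ref{span2} with $h(0)=-1$, linear independence via the filtration-preserving $\phi_i$ and unipotent $\sigma$, and nondegeneracy via the associated graded vertex superalgebra) is sound.
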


\section{Associative algebra $\tilde{\mathcal{A}}(g)$ and quantum vertex algebras $V_{\A(h)}$}

In this section, we study the quantum vertex algebra $V_{\A(h)}$
with $h(z)=g(e^{z})$, where $g(z)$ is a rational function such that $g(z)g(1/z)=1$.
More specifically, we introduce a new associative algebra $\tilde{\A}(g)$ and establish an isomorphism between
the category of restricted $\tilde{\A}(g)$-modules and
 the category of $\phi$-coordinated $V_{\A(h)}$-modules.

We begin by recalling some basics on formal calculus from \cite{Li-phi}.
Let $\C(z)$ denote the field of rational functions.
Denote by $\iota_{z,0}$ (resp. $\iota_{z,\infty}$) the field embedding of $\C(z)$ into $\C((z))$ (resp. $\C((z^{-1}))$),
which is the unique extension of
the embedding of $\C[z]$ into $\C((z))$ (resp. $\C((z^{-1}))$).
That is, for any rational function $k(z)$, $\iota_{z,0}(k(z))$ (resp. $\iota_{z,\infty}(k(z))$)
is the formal Laurent series expansion of $k(z)$ at $z=0$ (resp. $z=\infty$).

Denote by $F_{\C[[x]]}$ and $F_{\C[[w,z]]}$ the fields of fractions of rings $\C[[x]]$ and $\C[[w,z]]$, respectively.
We extend the domain of the field embedding $\iota_{x,0}$ from $\C(x)$ to $F_{\C[[x]]}$.
On the other hand, let $\iota_{w,z}$ denote the field embedding of $F_{\C[[w,z]]}$ into $\C((w))((z))$,
which is the unique extension of the canonical embedding of the ring $\C[[w,z]]$ into
$\C((w))((z))$.

Note that for any polynomial $p(z)$, $p(e^{x})\in \C[[x]]$ and that
 the assignment $p(z)\mapsto p(e^x)$ gives a ring embedding of $\C[z]$ into $\C[[x]]$.
 Furthermore, for $k(z)\in \C(z)$, we have $k(e^x)\in F_{\C[[x]]}$
and $\iota_{x,0}(k(e^x))\in \C((x))$.
On the other hand, we have
$$p(e^{x-y})\in \C[[x,y]]\   \   \   \mbox{ for }p(z)\in \C[z].$$
Furthermore, for any rational function $k(z)$,  we have
$$k(e^{x-y})\in F_{\C[[x,y]]}\   \mbox{ and  }\    \iota_{x,y}(k(e^{x-y}))\in \C((x))((y)).$$

In this section, we shall consider rational functions $g(z)$ such that $g(z)g(1/z)=1$.
In this respect, we have the following result which might be known somewhere:

\begin{lem}\label{g(z)-form}
Let $g(z)$ be a rational function. Then $g(z)g(1/z)=1$ if and only if
$$g(z)=\pm z^{l}p(z)/\tilde{p}(z),$$ where $l\in \Z,\ p(z)\in \C[z]$ with $p(0)\ne 0$ and
$\tilde{p}(z)=z^n p(1/z)\in \C[z]$ with $n=\deg p(z)$.
\end{lem}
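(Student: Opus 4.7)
The ``if'' direction is a direct computation: given $g(z) = \pm z^{l}p(z)/\tilde{p}(z)$, one computes $g(1/z)$ using the observation that for a polynomial $p$ of degree $n$ with $p(0)\ne 0$, the reversal $\tilde p$ also has degree $n$, and reversing twice recovers the original, i.e. $\widetilde{\tilde p}(z) = p(z)$. Hence $p(1/z) = z^{-n}\tilde p(z)$ and $\tilde p(1/z) = z^{-n}p(z)$, which makes $g(z)g(1/z)=1$ fall out immediately.

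For the ``only if'' direction, my plan is to write $g(z)$ in its essentially unique ``normalized'' form $g(z)=z^{l}p(z)/q(z)$ with $l\in\Z$, $p,q\in\C[z]$ coprime, $p(0)\ne 0$, and $q(0)\ne 0$. Setting $n=\deg p$ and $m=\deg q$, the identities $p(1/z)=z^{-n}\tilde p(z)$ and $q(1/z)=z^{-m}\tilde q(z)$ yield
\begin{equation*}
g(z)g(1/z) = z^{m-n}\,\frac{p(z)\tilde p(z)}{q(z)\tilde q(z)}.
\end{equation*}
Equating this to $1$ gives the polynomial identity $z^{m-n}p(z)\tilde p(z)=q(z)\tilde q(z)$.

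The key step is then to extract $n=m$ and $\tilde p = \pm q$. Comparing vanishing orders at $z=0$ on both sides (all four polynomials $p,\tilde p,q,\tilde q$ have nonzero constant term, since $p(0)\ne 0$, $q(0)\ne 0$, and the constant terms of $\tilde p,\tilde q$ are the leading coefficients of $p,q$) forces $m=n$. The resulting equation $p\tilde p = q\tilde q$, together with $\gcd(p,q)=1$, forces $q\mid\tilde p$; since $\deg q = \deg\tilde p$, we get $\tilde p = s\,q$ for some $s\in\C^{\times}$, hence $g(z)=s z^{l}p(z)/\tilde p(z)$. Substituting this back into $g(z)g(1/z)=1$ and using $\widetilde{\tilde p}=p$ collapses everything to $s^{2}=1$, so $s=\pm 1$.

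I do not expect a serious obstacle here; the only place requiring care is the bookkeeping with degrees and leading/constant coefficients when arguing that $\deg\tilde p=\deg p$ and that coprimality of $p$ and $q$ transfers to the reversed pair. Once $m=n$ is established, the coprimality-plus-degree argument producing $\tilde p=sq$ is short, and the final sign is determined by a one-line substitution.
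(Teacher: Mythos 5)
Your proof is correct and follows essentially the same route as the paper: write $g(z)=z^{l}p(z)/q(z)$ in lowest terms with $p(0),q(0)\ne 0$, reduce $g(z)g(1/z)=1$ to a polynomial identity, compare degrees (equivalently, vanishing orders at $0$) to get $\deg p=\deg q$, use coprimality to conclude $\tilde p$ is a nonzero scalar multiple of $q$, and pin the scalar to $\pm 1$ by substituting back. No issues.
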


\begin{proof} The ``if'' part is clearly true. Now, we assume $g(z)g(1/z)=1$.
Write $g(z)=z^{l}p(z)/q(z)$, where $l\in \Z$ and $p(z),q(z)$ are relatively prime polynomials with
$p(0), q(0)\ne 0$.
Set $m=\deg p(z)$ and $n=\deg q(z)$. As $g(z)g(1/z)=1$, we have
$p(z)p(1/z)=q(z)q(1/z)$, which gives
\begin{eqnarray}\label{left-right}
z^{n}p(z)(z^{m}p(1/z))=z^{m}q(z)(z^{n}q(1/z)).
\end{eqnarray}
Notice that $z^{m}p(1/z)$ and $z^{n}q(1/z)$  are polynomials of degrees $m$ and $n$, respectively.
It follows that $m=n$. Furthermore, since $p(z)$ and $q(z)$ are relatively prime,
from (\ref{left-right}) we get $q(z)| z^{n}p(1/z)$.
As $\deg q(z)=n=\deg z^np(1/z)$, we have
$q(z)=\alpha z^{n}p(1/z)$ for some nonzero complex number $\alpha$. Thus $g(z)=\alpha^{-1}p(z)/\tilde{p}(z)$.
Using $g(z)g(1/z)=1$ again, we get $\alpha^2=1$. Therefore, we have
$g(z)=\pm z^{l}p(z)/\tilde{p}(z),$ as desired.
\end{proof}

\begin{rmk}
{\em Taking $l=0$ and $p(z)=(z-q_1)(z-q_2)(z-q_3)$ with $q_i\ne 0,\pm 1$ for $i=1,2,3$
in Lemma \ref{g(z)-form}, we get
$$g(z)=\pm \frac{(z-q_1)(z-q_2)(z-q_3)}{(1-q_1z)(1-q_2z)(1-q_3 z)}.$$
This gives the rational function $h(z)$ that was used in \cite{bfmzz}.}
\end{rmk}

Using Lemma \ref{g(z)-form} we have:

 \begin{lem}\label{h(x)-factorization}
 Let $g(z)$ be any rational function such that $g(z)g(1/z)=1$ and set $h(x)=\iota_{x,0}(g(e^{x}))$. Then
 there exists $q(x)\in \C[[x]]$ such that $h(x)=\pm q(x)q(-x)^{-1}$.
 \end{lem}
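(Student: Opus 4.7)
The plan is to explicitly exhibit $q(x)$ by unpacking the structural form of $g(z)$ provided by Lemma \ref{g(z)-form}. That lemma gives
\[
g(z) = \pm z^{l} p(z)/\tilde{p}(z), \qquad \tilde{p}(z) = z^{n} p(1/z),\ \ n = \deg p,\ \ p(0) \ne 0.
\]
Before substituting $z = e^{x}$, I would first reduce to the case $p(1) \ne 0$: if $(z-1) \mid p(z)$, write $p(z) = (z-1) p_{1}(z)$; then
\[
\tilde{p}(z) = z^{n}(1/z - 1) p_{1}(1/z) = -(z-1)\tilde{p}_{1}(z),
\]
so the $(z-1)$ factors cancel from $p/\tilde{p}$ at the cost of an overall sign, yielding the same shape of factorization with a polynomial of strictly smaller degree. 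Iterating produces a factorization with $p(1) \ne 0$, which is the key requirement for the power-series step below.

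Next, substituting $z = e^{x}$ and using $\tilde{p}(e^{x}) = e^{nx} p(e^{-x})$ gives
\[
g(e^{x}) = \pm e^{(l-n)x}\, p(e^{x})/p(e^{-x}).
\]
Because $p(1) \ne 0$, the series $p(e^{-x})$ has nonzero constant term and is invertible in $\C[[x]]$. Hence $h(x) = \iota_{x,0}(g(e^{x}))$ already lies in $\C[[x]]$ and equals $\pm e^{(l-n)x} p(e^{x}) p(e^{-x})^{-1}$.

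Finally, I would set
\[
q(x) = e^{(l-n)x/2}\, p(e^{x}) \in \C[[x]],
\]
noting that $q(0) = p(1) \ne 0$, so $q(-x) = e^{-(l-n)x/2} p(e^{-x})$ is invertible in $\C[[x]]$. A direct multiplication then gives
\[
q(x)\, q(-x)^{-1} = e^{(l-n)x}\, p(e^{x})\, p(e^{-x})^{-1} = \pm h(x),
\]
which is the claimed factorization.

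There is no substantial obstacle: the only point requiring care is the invertibility of $q(-x)$ in $\C[[x]]$, which is precisely the reason for the preliminary reduction to $p(1) \ne 0$. The possibly non-integer exponent $(l-n)/2$ is harmless because $e^{ax}$ is a well-defined element of $\C[[x]]$ for every $a \in \C$.
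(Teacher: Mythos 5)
Your proof is correct and follows essentially the same route as the paper: both substitute $z=e^{x}$ into the factorization $g(z)=\pm z^{l}p(z)/\tilde{p}(z)$ from Lemma \ref{g(z)-form} and take $q(x)=e^{(l-n)x/2}\,p(e^{x})$. The only difference is in how the invertibility of $q(-x)$ is secured: you cancel $(z-1)$ factors to reduce to $p(1)\neq 0$, whereas the paper notes that $p$ and $\tilde{p}$ are relatively prime (from the proof of Lemma \ref{g(z)-form}), which already forces every root $q_i\neq \pm 1$ and hence $p(1)\neq 0$.
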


 \begin{proof} From Lemma \ref{g(z)-form}, we have $g(z)=\pm z^{l}p(z)/\tilde{p}(z)$, where $l\in \Z$,
 $p(z)\in \C[z]$ such that $p(0)\ne 0$ and where $\tilde{p}(z)=z^{n}p(1/z)$ with $n=\deg p(z)$.
 Furthermore, $p(z)$ and $\tilde{p}(z)$ are relatively prime from the proof.
 Notice that we may choose $p(z)$ to be monic.
 Then $p(z)=(z-q_1)(z-q_2)\cdots (z-q_n)$ with $q_i\ne 0$  for $1\le i\le n$, and
 $$\tilde{p}(z)=z^{n}p(1/z)=(1-q_1z)(1-q_2z)\cdots (1-q_n z).$$
Thus
 \begin{eqnarray}
 g(z)=\pm z^{l}\frac{(z-q_1)(z-q_2)\cdots (z-q_n)}{(1-q_1z)(1-q_2z)\cdots (1-q_n z)}.
 \end{eqnarray}
As $p(z)$ and $\tilde{p}(z)$ are relatively prime, we have $q_i\ne q_j^{-1}$ for $1\le i,j\le n$.
  Setting
 $$q(x)=e^{\frac{1}{2}lx}\cdot (e^{x}-q_1)(e^x-q_2)\cdots (e^x-q_n )e^{-\frac{1}{2}nx}\in \C[[x]],$$
 we obtain
 \begin{eqnarray*}
 q(x)q(-x)^{-1}=\frac{e^{\frac{1}{2}lx}}{e^{-\frac{1}{2}lx}}\cdot
 \frac{(e^{x}-q_1)(e^x-q_2)\cdots (e^x-q_n )e^{-\frac{1}{2}nx}}{(e^{-x}-q_1)(e^{-x}-q_2)
 \cdots (e^{-x}-q_n)e^{\frac{1}{2}nx}}
 =\pm g(e^{x}),
 \end{eqnarray*}
 as desired.
\end{proof}

\begin{rmk}\label{nondegenerateQVA}
{\em Assume $h(x)=\iota_{x,0}(g(e^{x}))$, where $g(z)$ is a rational function such that
$g(z)g(1/z)=1$. Combining Lemma \ref{h(x)-factorization} with
Theorem \ref{qva-main} and Proposition \ref{basisV'}, we have that
$V_{\A(h)}$ is a nondegenerate quantum vertex algebra.}
\end{rmk}

From now on, we  fix a rational function  $g(z)$ such that $g(z)g(1/z)=1$ and set
 \begin{eqnarray}
 h(x)=\iota_{x,0}(g(e^{x}))\in \C((x)).
 \end{eqnarray}
Notice that with $g(z)g(1/z)=1$ we have $g(1)^{2}=1$. This especially implies that $g(z)$ is analytic at $z=1$.
Thus $h(x)\in \C[[x]]$ with $h(x)h(-x)=1$.

\begin{definition}\label{def}
{\em Define $\tilde{\A}(g)$ to be the associative unital algebra over $\mathbb{C}$, generated by
$$E_{n},\   F_{n},\    \Psi_{n}\   \   (n\in \mathbb{Z}),$$
 subject to relations
\begin{eqnarray}
&&E(z)E(w)=\iota_{w,z}(g(w/z))E(w)E(z), \    \   \
F(z)F(w)=\iota_{w,z}(g(z/w))F(w)F(z),\label{eq:5.1}\\
&&\Psi(z)E(w)=\iota_{w,z}(g(w/z))E(w)\Psi(z), \    \   \
\Psi(z)F(w)=\iota_{w,z}(g(z/w))F(w)\Psi(z),\label{eq:5.2}\\
&&[E(z),F(w)]=\delta\left(\frac{w}{z}\right)\Psi(w),\label{eq:5.3}\\
&&[\Psi(z),\Psi(w)]=0,\label{eq:5.4}
\end{eqnarray}
where
$$E(z)=\sum_{n\in \mathbb{Z}}E_{n}z^{-n},\   \   \   F(z)=\sum_{n\in \mathbb{Z}}F_{n}z^{-n},\   \  \
\Psi(z)=\sum_{n\in \mathbb{Z}}\Psi_{n}z^{-n}.$$}
\end{definition}

Write
$$\iota_{z,0}(g(z))=\sum_{l\ge p}g_{l}z^{l},\   \  \   \   \iota_{z,0}(g(1/z))=\sum_{l\ge q}\tilde{g}_{l}z^{l},$$
where $p$ and $q$ are integers. We have
$$\iota_{w,z}(g(z/w))=\sum_{l\ge p}g_{l}z^{l}w^{-l},\    \   \
\iota_{w,z}(g(w/z))=\sum_{l\ge q }\tilde{g}_{l}z^{l}w^{-l}.$$
The defining relations of $\tilde{\A}(g)$ can be written in terms of components as
\begin{eqnarray}\label{eA(h)relations}
&&E_{m}E_{n}=\sum_{l\ge q}\tilde{g}_{l}E_{n-l}E_{m+l},\   \   \   \  F_{m}F_{n}=\sum_{l\ge p}g_{l}F_{n-l}F_{m+l},
\nonumber\\
&&\Psi_{m}E_{n}=\sum_{l\ge q}\tilde{g}_{l}E_{n-l}\Psi_{m+l},\   \   \   \  \Psi_{m}F_{n}=\sum_{l\ge p}g_{l}F_{n-l}\Psi_{m+l},
\nonumber\\
&&[E_{m},F_{n}]=\Psi_{m+n},\   \    \   \   [\Psi_{m},\Psi_{n}]=0
\end{eqnarray}
for $m,n\in \Z$.  From this, we see that $\tilde{\A}(g)$  is a $\Z$-graded algebra with
\begin{eqnarray}
\deg E_{n}=\deg F_{n}=\deg \Psi_{n}=-n\   \   \  \mbox{ for }n\in \Z.
\end{eqnarray}
Then we define the notions of  (lower truncated) $\Z$-graded $\tilde{\A}(g)$-module and
$\N$-graded $\tilde{\A}(g)$-module in the obvious way.


\begin{definition}
{\em An $\tilde{\A}(g)$-module $W$ is said to be {\em restricted} if for every $w\in W$,
$E_{n}w=0,\ F_{n}w=0$ and $\Psi_{n}w=0$ for $n$ sufficiently large, or namely,
if $E(x),\ F(x)$ and $\Psi(x)\in \E(W)$.}
\end{definition}

Let $W$ be a restricted $\tilde{\A}(g)$-module.  Set $U_{W}=\{E(x),F(x),\Psi(x)\}.$
From relations (\ref{eq:5.1})--(\ref{eq:5.4}), it can be readily seen
that $U_{W}$ is an $S_{trig}$-local subset
 of ${\mathcal{E}}(W)$. In view of Theorem \ref{thm2.2}, $U_W$ generates
a weak quantum vertex algebra $\langle U_W\rangle_{e}$ in ${\mathcal{E}}(W).$

The following gives a connection between  algebras $\tilde{\A}(g)$ and $\A(h)$:

\begin{prop}\label{vacuum module}
Let $W$ be a restricted $\tilde{\A}(g)$-module and let $\langle U_W\rangle_{e}$ be the weak quantum vertex algebra generated by the subset $U_{W}=\{E(x),F(x),\Psi(x)\}$ of ${\mathcal{E}}(W).$
Then $\langle U_W\rangle_{e}$ is an $\A(h)$-module with $e(z),\ f(z),$ and $\psi(z)$ acting as
 $Y_{\mathcal{E}}^{e}(E(x),z)$, $Y_{\mathcal{E}}^{e}(F(x),z),$ and $Y_{\mathcal{E}}^{e}(\Psi(x),z)$, respectively.
 Moreover, $(\langle U_W\rangle_{e},\textbf{1}_W)$ is a vacuum  $\A(h)$-module.
\end{prop}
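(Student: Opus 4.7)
The plan is to verify that the vertex operators $Y_{\mathcal{E}}^e(E(x),z)$, $Y_{\mathcal{E}}^e(F(x),z)$, $Y_{\mathcal{E}}^e(\Psi(x),z)$ satisfy the defining relations of $\A(h)$ on $\langle U_W\rangle_e$, and then check that $\textbf{1}_W$ is a vacuum vector generating $\langle U_W\rangle_e$ as an $\A(h)$-module.

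First I would establish the $S_{trig}$-locality of $U_W=\{E(x),F(x),\Psi(x)\}$ in $\mathcal{E}(W)$ directly from the defining relations of $\tilde{\A}(g)$: the swap relations (\ref{eq:5.1})--(\ref{eq:5.2}) already have the required form with $k=0$ and $q_i(y)=g(1/y)$ or $g(y)$, while the commutator $[E(x_1),F(x_2)]=\delta(x_2/x_1)\Psi(x_2)$ fits the $S_{trig}$-local mold with $k=1$, since $(x_1-x_2)\delta(x_2/x_1)=0$. Theorem \ref{thm2.2} then produces the weak quantum vertex algebra $\langle U_W\rangle_e$ with $W$ as a $\phi$-coordinated module satisfying $Y_W(a(x),z)=a(z)$.

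Next I would set $e(z)=Y_{\mathcal{E}}^e(E(x),z)$, $f(z)=Y_{\mathcal{E}}^e(F(x),z)$, $\psi(z)=Y_{\mathcal{E}}^e(\Psi(x),z)$ on $\langle U_W\rangle_e$, and verify the $\A(h)$ relations by direct computation using the formula $Y_{\mathcal{E}}^e(a(x),z)b(x)=x^{-k}(e^z-1)^{-k}((x_1-x)^k a(x_1)b(x))|_{x_1=xe^z}$. The essential mechanism is that the substitution $x_1=xe^z$ converts the multiplicative factor $g(x_2/x_1)$ of $\tilde{\A}(g)$ into $g(e^{z_2-z_1})=h(z_2-z_1)$, precisely matching the factor in $\A(h)$. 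Concretely, for the $EE$-swap one applies $e(z_1)e(z_2)$ and $e(z_2)e(z_1)$ to $\textbf{1}_W$, reducing via nested $Y_{\mathcal{E}}^e$-substitutions to $E(xe^{z_1})E(xe^{z_2})$ and $E(xe^{z_2})E(xe^{z_1})$, which are related by the factor $h(z_2-z_1)$ via the $\tilde{\A}(g)$-swap relation at $x_1=xe^{z_1}, x_2=xe^{z_2}$; the $FF$, $\Psi E$, $\Psi F$, and $\Psi\Psi$ relations are entirely analogous. For the $[e,f]$ commutator, the delta function emerges from the interplay between the $(x_1-x)$ factor in the $Y_{\mathcal{E}}^e$-formula and the delta in $[E(x_1),F(x)]=\delta(x/x_1)\Psi(x)$, with standard delta-function identities translating the multiplicative delta into the additive formal delta $z_1^{-1}\delta(z_2/z_1)$. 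The relations, once checked on $\textbf{1}_W$, extend to all of $\langle U_W\rangle_e$ by standard vertex algebra propagation arguments.

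The vacuum and generation properties then follow essentially for free: the creation axiom $Y(a,z)\textbf{1}_W\in\langle U_W\rangle_e[[z]]$ gives $e_n\textbf{1}_W=f_n\textbf{1}_W=\psi_n\textbf{1}_W=0$ for $n\ge 0$, the constant-in-$z$ term of $Y_{\mathcal{E}}^e(a(x),z)\textbf{1}_W=a(xe^z)$ recovers $a(x)$ itself, and since $\langle U_W\rangle_e$ is by construction the smallest $Y_{\mathcal{E}}^e$-closed subspace containing $U_W\cup\{\textbf{1}_W\}$, the $\A(h)$-module generated by $\textbf{1}_W$ exhausts $\langle U_W\rangle_e$. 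The main obstacle I anticipate is the delta function in the $[e,f]$ commutator: naive substitution $x_1=xe^{z_1}$ turns $\delta(xe^{z_2}/x_1)$ into the divergent formal expression $\delta(e^{z_2-z_1})$, so recovering the additive formal delta $z_1^{-1}\delta(z_2/z_1)$ requires a careful argument combining the identity $a\,\delta(a/b)=b\,\delta(a/b)$ with the explicit $(e^z-1)^{-k}$ factor in the $Y_{\mathcal{E}}^e$-formula.
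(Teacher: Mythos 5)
Your overall architecture matches the paper's: build $\langle U_W\rangle_e$ from the $S_{trig}$-local set $U_W$ via Theorem \ref{thm2.2}, verify the $\A(h)$ relations for the fields $Y_{\mathcal{E}}^{e}(E(x),z)$, $Y_{\mathcal{E}}^{e}(F(x),z)$, $Y_{\mathcal{E}}^{e}(\Psi(x),z)$, and then read off the vacuum and generation properties from the weak quantum vertex algebra axioms (that last part of your argument is fine and is essentially what the paper does). For the four swap relations your plan also lands in the right place: the conversion of $\iota_{w,z}(g(w/z))$ into $h(z_2-z_1)=g(e^{z_2-z_1})$ under $x_i=xe^{z_i}$ is exactly the content of Proposition 5.3 of \cite{Li-phi}, which the paper simply cites; your sketch amounts to re-deriving it in this special case. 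However, note that the relations must hold on all of $\langle U_W\rangle_e$, not just on $\textbf{1}_W$; your appeal to ``standard vertex algebra propagation'' is doing real work here and in the nonlocal setting is not automatic --- the cited transfer proposition gives the relations on the whole algebra directly, which is why the paper leans on it rather than on a vacuum-plus-propagation argument.

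The genuine gap is in the $[e,f]$ commutator, and you have correctly identified it as the main obstacle but your proposed fix does not work. One cannot substitute $x_1=xe^{z_1}$ into $\delta(x e^{z_2}/x_1)$ and then repair the divergent expression $\delta(e^{z_2-z_1})$ with the identity $a\,\delta(a/b)=b\,\delta(a/b)$ and the $(e^z-1)^{-k}$ normalization; the delta function has to be removed \emph{before} any substitution. The paper's mechanism is: (i) multiply the commutator relation by $(x_1-x_2)$ to kill the delta, obtaining the plain locality relation $(z_2-z_1)Y_{\mathcal{E}}^{e}(E(x),z_1)Y_{\mathcal{E}}^{e}(F(x),z_2)=(z_2-z_1)Y_{\mathcal{E}}^{e}(F(x),z_2)Y_{\mathcal{E}}^{e}(E(x),z_1)$ via the transfer result; (ii) invoke the resulting $\mathcal{S}$-Jacobi identity (\ref{eq:5.6}), whose right-hand side expresses the commutator as a sum of derivatives of $z_2^{-1}\delta(z_1/z_2)$ weighted by the products $E(x)^{e}_{n}F(x)$ for $n\ge 0$; and (iii) compute those products by the residue formula (Lemma 6.7 of \cite{Li-phi}), finding $E(x)^{e}_{n}F(x)=0$ for $n\ge 1$ and $E(x)^{e}_{0}F(x)=\Psi(x)$. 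The additive formal delta thus re-emerges from the singular part of the operator product expansion, not from manipulating $\delta(e^{z_2-z_1})$. Without step (ii) --- which requires the weak associativity/Jacobi machinery for $Y_{\mathcal{E}}^{e}$, not just delta-function identities --- your computation of the commutator cannot be completed.
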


 \begin{proof} With the relations (\ref{eq:5.1})-(\ref{eq:5.4}), from Proposition 5.3 in \cite{Li-phi}  we have
 \begin{eqnarray}
&&Y_{\mathcal{E}}^{e}(E(x),z_{1})Y_{\mathcal{E}}^{e}(E(x),z_{2})=\iota_{z_2,z_1}(g(e^{z_2-z_1}))Y_{\mathcal{E}}^{e}(E(x),z_{2})Y_{\mathcal{E}}^{e}(E(x),z_{1}), \nonumber\\
&&Y_{\mathcal{E}}^{e}(F(x),z_{1})Y_{\mathcal{E}}^{e}(F(x),z_{2})=\iota_{z_2,z_1}(g(e^{z_1-z_2}))Y_{\mathcal{E}}^{e}(F(x),z_{2})Y_{\mathcal{E}}^{e}(F(x),z_{1}),\nonumber \\
&&Y_{\mathcal{E}}^{e}(\Psi(x),z_{1})Y_{\mathcal{E}}^{e}(E(x),z_{2})=\iota_{z_2,z_1}(g(e^{z_2-z_1}))Y_{\mathcal{E}}^{e}(E(x),z_{2})Y_{\mathcal{E}}^{e}(\Psi(x),z_{1}),\nonumber \\
&&Y_{\mathcal{E}}^{e}(\Psi(x),z_{1})Y_{\mathcal{E}}^{e}(F(x),z_{2})
=\iota_{z_2,z_1}(g(e^{z_1-z_2}))Y_{\mathcal{E}}^{e}(F(x),z_{2})Y_{\mathcal{E}}^{e}(\Psi(x),z_{1}), \nonumber\\
&&(z_2-z_1)Y_{\mathcal{E}}^{e}(E(x),z_{1})Y_{\mathcal{E}}^{e}(F(x),z_{2})=
(z_2-z_1)Y_{\mathcal{E}}^{e}(F(x),z_{2})Y_{\mathcal{E}}^{e}(E(x),z_{1}), \label{eq:5.5}\\
&&[Y_{\mathcal{E}}^{e}(\Psi(x),z_{1}),Y_{\mathcal{E}}^{e}(\Psi(x),z_{2})]=0.\nonumber
\end{eqnarray}
Furthermore, with the $\mathcal{S}$-locality relation (\ref{eq:5.5})  we have
\begin{eqnarray}
 && z_{0}^{-1}\delta\left(\frac{z_{1}-z_{2}}{z_{0}}\right)Y_{\mathcal{E}}^{e}(E(z),z_{1})Y_{\mathcal{E}}^{e}(F(z),z_{2})\nonumber\\
   &&\  \   \  \   -z_{0}^{-1}\delta\left(\frac{z_{2}-z_{1}}{-z_{0}}\right)
   Y_{\mathcal{E}}^{e}(F(z),z_{2})Y_{\mathcal{E}}^{e}(E(z),z_{1})
                                                            \nonumber\\
&=&z_{2}^{-1}\delta\left(\frac{z_{1}-z_{0}}{z_{2}}\right)
Y_{\mathcal{E}}^{e}(Y_{\mathcal{E}}^{e}(E(z),z_0)F(z),z_{2}).\label{eq:5.6}
\end{eqnarray}
With relations (\ref{eq:5.1})-(\ref{eq:5.4}), from Lemma 6.7 of \cite{Li-phi} we have
$E(z)^{e}_{n}F(z)=0$ for $n\geq 1$ and
\begin{eqnarray*}
E(z)^{e}_{0}F(z)
&=&\mbox{Res}_{z_{1}}\left(z^{-1}E(z_{1})F(z)-z^{-1}F(z)E(z_1)\right) \nonumber\\
 &=&\mbox{Res}_{z_{1}}z^{-1}\delta\left(\frac{z_1}{z}\right)\Psi(z)\nonumber\\
          &=&\Psi(z).
\end{eqnarray*}
Then using (\ref{eq:5.6}), we get
 \begin{eqnarray*}
 &&Y_{\mathcal{E}}^{e}(E(z),z_{1})Y_{\mathcal{E}}^{e}(F(z),z_{2})
-Y_{\mathcal{E}}^{e}(F(z),z_{2})Y_{\mathcal{E}}^{e}(E(z),z_{1})\nonumber\\
       &=& z_{2}^{-1}\delta\left(\frac{z_{1}}{z_{2}}\right)Y_{\mathcal{E}}^{e}(E(z)_{0}^{e}F(z),z_{2})\nonumber\\
       &=& z_{2}^{-1}\delta\left(\frac{z_{1}}{z_{2}}\right)Y_{\mathcal{E}}^{e}(\Psi(z),z_{2}).
\end{eqnarray*}
It follows that $\langle U_W\rangle_{e}$ is an $\A(h)$-module with
$e(z)$, $f(z),$ and $\psi(z)$ acting as
$Y_{\mathcal{E}}^{e}(E(x),z)$, $Y_{\mathcal{E}}^{e}(F(x),z),$ and $Y_{\mathcal{E}}^{e}(\Psi(x),z)$, respectively.
Since  $\langle U_W\rangle_{e}$ is generated by
$\{E(x),F(x),\Psi(x)\}$ as a nonlocal vertex algebra,
                   we see that $\langle U_W\rangle_{e}$ is generated from $\textbf{1}_W$ as an $\A(h)$-module.
                  As $\textbf{1}_{W}$ is the vacuum vector of
                   the weak quantum vertex algebra $\langle U_W\rangle_{e}$,
                   we have $E(x)_{n}^{e}\textbf{1}_W=0$, $F(x)_{n}^{e}\textbf{1}_W=0,$ and $\Psi(x)_{n}^{e}\textbf{1}_W=0$
                   for $n\geq 0$.
Therefore, $(\langle U_W\rangle_{e},\textbf{1}_W)$ is a vacuum $\A(h)$-module.
\end{proof}

\begin{rmk}\label{rmk:5.1}
{\em Recall that $V_{\A(h)}$ is a weak quantum vertex algebra with  a set of generators $\{e,f,\psi\}$ and with
$Y(e,x)=e(x)$, $Y(f,x)=f(x)$ and
$Y(\psi,x)=\psi(x)$, and
\begin{eqnarray*}
 && Y(e,x_1)Y(e,x_2)=\iota_{x_2,x_1}(g(e^{x_2-x_1}))Y(e,x_2)Y(e,x_1),\\
 && Y(f,x_1)Y(f,x_2)=\iota_{x_2,x_1}(g(e^{x_1-x_2}))Y(f,x_2)Y(f,x_1),\\
  && Y(\psi,x_1)Y(e,x_2)=\iota_{x_2,x_1}(g(e^{x_2-x_1}))Y(e,x_2)Y(\psi,x_1),\\
    && Y(\psi,z_1)Y(f,z_2)=\iota_{x_2,x_1}(g(e^{x_1-x_2}))Y(f,x_2)Y(\psi,x_1),\\
  &&[Y(e,x_1),Y(f,x_2)]=x_1^{-1}\delta\left(\frac{x_2}{x_1}\right)Y(\psi,x_2),\\
&&[Y(\psi,x_1),Y(\psi,x_2)]=0,
\end{eqnarray*}
where $h(x)=\iota_{x,0}(g(e^x))\in \C((x))$.
From these relations, we have (see \cite{Li-nonlocal})
\begin{eqnarray*}
&&e_{n}e=0,\ f_{n}f=0,\ \psi_{n}e=0, \ \psi_{n}f=0,\  \psi_{n}\psi=0\   \   \   \mbox{ for }n\ge 0,\\
&&e_{0}f=\psi \  \mbox{ and }\  e_{n}f=0 \   \   \   \mbox{ for }n\ge 1.
\end{eqnarray*}}
\end{rmk}

Now, we are in a position to present the main result of this section:

\begin{thm}\label{thm5.3}
Let $W$ be a restricted $\tilde{\A}(g)$-module.
Then there exists a $\phi$-coordinated $V_{\A(h)}$-module structure
$Y_{W}(\cdot,x)$ on $W$, which is uniquely determined by $Y_{W}(e,x)=E(x),$ $Y_{W}(f,x)=F(x)$ and
$Y_{W}(\psi,x)=\Psi(x)$.
On the other hand, suppose $(W,Y_W)$ is a $\phi$-coordinated $V_{\A(h)}$-module. Then
$W$ is a restricted $\tilde{\A}(g)$-module with $E(x)=Y_{W}(e,x),$ $F(x)=Y_{W}(f,x)$ and $\Psi(x)=Y_{W}(\psi,x)$.
\end{thm}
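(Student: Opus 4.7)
The plan is to leverage the main tools already assembled: Theorem~\ref{thm2.2} (which produces a weak quantum vertex algebra and a $\phi$-coordinated module from any $\mathcal{S}_{trig}$-local family of fields), Proposition~\ref{vacuum module} (which identifies the resulting algebra as a vacuum $\A(h)$-module), the universality of $V_{\A(h)}$, and the unlabeled proposition in Section~2 that converts an $\mathcal{S}$-locality relation on $V$ with kernel $\iota_{x_2,x_1}(q_i(e^{x_2-x_1}))$ into a corresponding relation on any $\phi$-coordinated module with kernel $\iota_{x_2,x_1}(q_i(x_2/x_1))$. The key arithmetic observation is that the substitution $e^{x_2-x_1}\mapsto x_2/x_1$ converts $h(x) = g(e^x)$ back into the rational function $g$, so that the $\tilde{\A}(g)$-relations and the $V_{\A(h)}$ $\mathcal{S}$-locality relations are conjugate under $\phi$-coordination.

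For the forward direction, let $W$ be a restricted $\tilde{\A}(g)$-module. Relations (\ref{eq:5.1})--(\ref{eq:5.4}) exhibit $U_W = \{E(x), F(x), \Psi(x)\}$ as an $\mathcal{S}_{trig}$-local subset of $\mathcal{E}(W)$, so by Theorem~\ref{thm2.2} one obtains a weak quantum vertex algebra $\langle U_W\rangle_e$ for which $W$ is a $\phi$-coordinated module via $Y_W(a(x),z) = a(z)$. Proposition~\ref{vacuum module} then makes $(\langle U_W\rangle_e, \mathbf{1}_W)$ into a vacuum $\A(h)$-module. By the universality of $V_{\A(h)}$, there is a unique $\A(h)$-module homomorphism $\varphi\colon V_{\A(h)} \to \langle U_W\rangle_e$ with $\varphi(\mathbf{1}) = \mathbf{1}_W$, necessarily sending $e, f, \psi$ to $E(x), F(x), \Psi(x)$. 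I would then verify that $\varphi$ is in fact a homomorphism of weak quantum vertex algebras, after which $Y_W(v, z) := Y_W(\varphi(v), z)$ provides the required $\phi$-coordinated $V_{\A(h)}$-module structure on $W$; uniqueness follows because $\{e, f, \psi\}$ generates $V_{\A(h)}$ as a weak quantum vertex algebra.

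For the reverse direction, given a $\phi$-coordinated $V_{\A(h)}$-module $(W, Y_W)$, set $E(x) = Y_W(e, x)$, $F(x) = Y_W(f, x)$, $\Psi(x) = Y_W(\psi, x)$; restrictedness is automatic from the definition of $\phi$-coordinated module. The $\mathcal{S}$-locality relations of $V_{\A(h)}$ collected in Remark~\ref{rmk:5.1}, together with the identities $e_n e = f_n f = \psi_n e = \psi_n f = \psi_n \psi = 0$ for $n \ge 0$, $e_0 f = \psi$, and $e_n f = 0$ for $n \ge 1$ (also from Remark~\ref{rmk:5.1}), feed directly into the proposition from Section~2 and produce precisely the relations $E(z)E(w) = \iota_{w,z}(g(w/z))E(w)E(z)$, $F(z)F(w) = \iota_{w,z}(g(z/w))F(w)F(z)$, the two $\Psi$--$E$ and $\Psi$--$F$ relations, $[\Psi(z), \Psi(w)] = 0$, and $[E(z), F(w)] = \delta(w/z)\Psi(w)$, which are exactly the defining relations (\ref{eq:5.1})--(\ref{eq:5.4}) of $\tilde{\A}(g)$.

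The main obstacle lies in the forward direction: promoting the $\A(h)$-module homomorphism $\varphi$ to a homomorphism of weak quantum vertex algebras, since universality only furnishes the former. I would handle this by observing that the vertex operator maps $Y$ on $V_{\A(h)}$ and $Y_{\mathcal{E}}^e$ on $\langle U_W\rangle_e$ are both determined on the generating sets $\{e, f, \psi\}$ and $\{E(x), F(x), \Psi(x)\}$ respectively, and then propagating the intertwining property throughout $V_{\A(h)}$ using weak associativity together with the compatibility of $\varphi$ with the translation operators induced by the $d$-action on both vacuum $\A(h)$-modules.
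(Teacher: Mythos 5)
Your proposal is correct and follows essentially the same route as the paper: for the forward direction, apply Theorem~\ref{thm2.2} and Proposition~\ref{vacuum module} to make $\langle U_W\rangle_e$ a vacuum $\A(h)$-module, invoke the universality of $V_{\A(h)}$ to get a homomorphism onto it, upgrade that map to a homomorphism of nonlocal vertex algebras by checking the intertwining property on the generators $e,f,\psi$, and pull back the canonical $\phi$-coordinated module structure; for the converse, feed the $\mathcal{S}$-locality relations of Remark~\ref{rmk:5.1} and the identities $e_nf=0$ $(n\ge 1)$, $e_0f=\psi$ into the transfer proposition from Section~2 to recover relations (\ref{eq:5.1})--(\ref{eq:5.4}). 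The step you single out as the main obstacle is handled in the paper exactly as you suggest, so there is nothing further to add.
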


 \begin{proof} Let $W$ be a restricted $\tilde{\A}(g)$-module. From Theorem \ref{vacuum module},
 the weak quantum vertex algebra $\langle U_W\rangle_{e}$ generated by $U_W=\{E(x),F(x),\Psi(x)\}$
  is a vacuum $\A(h)$-module with $e_{n},$ $f_{n},$ and $\psi_{n}$ for $n\in \Z$ acting as $E(x)_{n}^{e}$,  $F(x)_{n}^{e}$, and
  $\Psi(x)_{n}^{e}$, respectively.
Since the vacuum $\A(h)$-module ($V_{\A(h)}$, ${\bf 1}$)
  is universal, there exists an $\A(h)$-module homomorphism
 $\sigma$ from $V_{\A(h)}$ to $\langle U_W\rangle_{e}$, sending $\textbf{1}$ to $1_{W}$.
 We have
 $$\sigma(e)=\sigma(e_{-1}{\bf{1}})=E(x)_{-1}^{e}1_{W}=E(x),$$
 $$\sigma(f)=\sigma(f_{-1}{\bf{1}})=F(x)_{-1}^{e}1_{W}=F(x),$$
 $$\sigma(\psi)=\sigma(\psi_{-1}{\bf{1}})=\Psi(x)_{-1}^{e}1_{W}=\Psi(x),$$
 and
 \begin{eqnarray*}
 &&\sigma(Y(e,z)v)=\sigma(e(z)v)=Y_{\mathcal{E}}^{e}(E(x),z)\sigma(v)=Y_{\mathcal{E}}^{e}(\sigma(e),z)\sigma(v)\\
 &&\sigma(Y(f,z)v)=\sigma(f(z)v)=Y_{\mathcal{E}}^{e}(F(x),z)\sigma(v)=Y_{\mathcal{E}}^{e}(\sigma(f),z)\sigma(v)\\
 &&\sigma(Y(\psi,z)v)=\sigma(\psi(z)v)=Y_{\mathcal{E}}^{e}(\Psi(x),z)\sigma(v)
 =Y_{\mathcal{E}}^{e}(\sigma(\psi),z)\sigma(v)
 \end{eqnarray*}
 for $v\in V_{\A(h)}$.
Since $V_{\A(h)}$ is generated by $e,f,\psi$ as a nonlocal vertex algebra,
it follows that $\sigma$ is a homomorphism of nonlocal vertex algebras.
 As  $W$ is a canonical $\phi$-coordinated module for the
weak quantum vertex algebra  $\langle U_W\rangle_{e}$, it follows that
  $W$ is a $\phi$-coordinated $V_{\A(h)}$-module with
  $Y_{W}(e,z)=E(z)$, $Y_{W}(f,z)=F(z)$ and $Y_{W}(\psi,z)=\Psi(z)$.

On the other hand, let $W$ be a $\phi$-coordinated $V_{\A(h)}$-module.
  From Propositions 5.6 and  5.9 of \cite{Li-phi}, we have
\begin{eqnarray*}
 && Y_{W}(e,z_1)Y_{W}(e,z_2)-\iota_{z_2,z_1}(g(z_{2}/z_{1}))Y_{W}(e,z_2)Y_{W}(e,z_1)=0,\\
 && Y_{W}(f,z_1)Y_{W}(f,z_2)-\iota_{z_2,z_1}(g(z_{1}/z_{2}))Y_{W}(f,z_2)Y_{W}(f,z_1)=0,\\
 && Y_{W}(\psi,z_1)Y_{W}(e,z_2)-\iota_{z_2,z_1}(g(z_{2}/z_{1}))Y_{W}(e,z_2)Y_{W}(\psi,z_1)=0,\\
  &&Y_{W}(\psi,z_1)Y_{W}(f,z_2)-\iota_{z_2,z_1}(g(z_{1}/z_{2}))Y_{W}(f,z_2)Y_{W}(\psi,z_1)=0,\\
 && Y_{W}(e,z_1)Y_{W}(f,z_2)-Y_{W}(f,z_2)Y_{W}(e,z_1)\\
 &=& \mbox{Res}_{z_{0}}z_{1}^{-1}\delta\left(\frac{z_2e^{z_{0}}}{z_{1}}\right)z_{2}e^{z_{0}}Y_{W}(Y(e,z_0)f,z_2)\\
 &=& \delta\left(\frac{z_2}{z_1}\right)Y_{W}(\psi,z_{2}),
\end{eqnarray*}
where we use the fact that $e_{n}f=0$ for $n\ge 1$ and $e_{0}f=\psi$
(see Remark \ref{rmk:5.1}).
This shows that $W$ is an $\tilde{\A}(g)$-module with $E(z)=Y_{W}(e,z),$ $F(z)=Y_{W}(f,z)$ and $\Psi(z)=Y_{W}(\psi,z)$.
On the other hand, with $W$ a $\phi$-coordinated $V_{\A(h)}$-module we have
$Y_{W}(e,z)$, $Y_{W}(f,z)$, $Y_{W}(f,z)\in {\mathcal{E}}(W)$.
Therefore, $W$ is a restricted $\tilde{\A}(g)$-module.
\end{proof}

In view of Theorem \ref{thm5.3}, classifying irreducible $\N$-graded $\phi$-coordinated $V_{{\mathcal{A}}(h)}$-modules
amounts to classifying irreducible $\N$-graded $\tilde{\mathcal{A}}(g)$-modules.
In the following, we shall classify irreducible $\N$-graded $\tilde{\mathcal{A}}(g)$-modules for $g$ of a certain form.

For the rest of this section, we assume  such that  $g(z)$ is analytic at $z=0$ with $g(0)\ne 0$.
(This amounts to that $l=0$ in Lemma \ref{g(z)-form}.)
This implies that $g(z)$ is also analytic at $z=\infty$ with $g(0)g(\infty)=1$. Then we have
$$\iota_{z,0}(g(z)),\    \    \   \   \iota_{z,0}(g(1/z)) \in \C[[z]],$$
so that
\begin{eqnarray}
\iota_{z,0}(g(z))=\sum_{l\ge 0}g_{l}z^{l},\   \  \   \   \iota_{z,0}(g(1/z))=\sum_{l\ge 0}\tilde{g}_{l}z^{l}.
\end{eqnarray}
 Set $\alpha=g(0)=g_0$. Then $g(\infty)=\tilde{g}_{0}=\alpha^{-1}$.

\begin{definition}
{\em  Define  $A[\alpha]$ to be the associative unital algebra over $\C$ with generators $E[0], F[0], \Psi[0]$,
subject to relations
 \begin{eqnarray}
&&E[0]^{2}=\alpha^{-1}E[0]^{2}, \   \   \  \   F[0]^{2}=\alpha F[0]^{2}, \\
&&\Psi[0]E[0]=\alpha^{-1}E[0]\Psi[0],\   \    \    \Psi[0]F[0]=\alpha F[0]\Psi[0],\   \   \
  [E[0],F[0]]=\Psi[0].
\end{eqnarray}}
\end{definition}

It can be readily seen that $A[\alpha]$ becomes an $\N$-graded algebra by defining
 $\deg E[0]=1=\deg F[0]$ and $\deg \Psi[0]=2$.
We see that $A[\alpha]$ has a maximal ideal of codimension $1$, so that $A[\alpha]$ has one
$1$-dimensional  irreducible module which is unique up to isomorphism.

\begin{thm}\label{classification}
Let $W=\oplus_{n\in \N}W[n]$ be an $\N$-graded $\tilde{\mathcal{A}}(g)$-module
with $W[0]\ne 0$. Then $W[0]$ is an $A[\alpha]$-module with $E[0], F[0], \Psi[0]$ acting as
$E_{0},F_0,\Psi_0$, respectively. Furthermore,
if $W$ is irreducible, $W[0]$ is an irreducible $A[\alpha]$-module. On the other hand,
assume $W_1$ and $W_2$ are irreducible $\N$-graded  $\tilde{\mathcal{A}}(g)$-modules
with $W_1[0],\ W_2[0]\ne 0$. Then
$W_1$ and $W_2$ are  isomorphic
if and only if $W_1[0]$ and $W_2[0]$ are isomorphic $A[\alpha]$-modules.
\end{thm}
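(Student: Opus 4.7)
My plan breaks into four logical parts: establishing the $A[\alpha]$-module structure on $W[0]$, reducing irreducibility to a key degree-zero reduction lemma, proving that lemma (which I expect to be the main obstacle), and verifying the isomorphism criterion via a graph construction.

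First I would establish the $A[\alpha]$-structure. The $\N$-grading together with $\deg E_n = \deg F_n = \deg \Psi_n = -n$ forces $E_l W[0] = F_l W[0] = \Psi_l W[0] = 0$ for every $l \geq 1$. Setting $m = n = 0$ in the relations (\ref{eA(h)relations}) and applying them to $w \in W[0]$, each infinite sum collapses to its $l = 0$ term, yielding for example
\begin{eqnarray*}
E_0^2 w = \sum_{l \geq 0} \tilde{g}_l E_{-l} E_l w = \tilde{g}_0 E_0^2 w = \alpha^{-1} E_0^2 w,
\end{eqnarray*}
which is exactly the $A[\alpha]$-relation $E[0]^2 = \alpha^{-1} E[0]^2$; the remaining defining relations of $A[\alpha]$ drop out the same way, using $g_0 = \alpha$ and $\tilde g_0 = \alpha^{-1}$.

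Next, if $W$ is irreducible and $V \subseteq W[0]$ is a nonzero $A[\alpha]$-submodule, then $\tilde{\A}(g) V$ is a nonzero graded submodule of $W$, hence equals $W$, and consequently $(\tilde{\A}(g) V)[0] = W[0]$. The reverse inclusion $(\tilde{\A}(g) V)[0] \subseteq V$ reduces to the following \emph{key lemma}: for every $v \in W[0]$ and every monomial $X = a_{j_1} \cdots a_{j_r}$ in the generators of $\tilde{\A}(g)$ with $\sum_k \deg a_{j_k} = 0$, one has $Xv \in A[\alpha] v$. I expect this lemma to be the principal difficulty. The most efficient route is to establish a triangular (P-B-W) decomposition $\tilde{\A}(g) \cong \tilde{\A}(g)_+ \otimes A[\alpha] \otimes \tilde{\A}(g)_-$ analogous to Corollary \ref{basisV} (and Proposition \ref{basisV'}), where $\tilde{\A}(g)_\pm$ is the subalgebra generated by the strictly positive-degree, respectively strictly negative-degree, generators; once this is in hand, every degree-zero element applied to $v$ lies in $A[\alpha] v$ because $\tilde{\A}(g)_-$ factors annihilate $v$ by the $\N$-grading and, for total degree to remain zero, the $\tilde{\A}(g)_+$ factor must then be trivial. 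A direct alternative is to induct on the length $r$ together with a secondary measure such as total positive-index content: the subcases where the rightmost factor has negative or zero degree are immediate, and the subcase of positive degree is handled by commuting an adjacent negative-degree factor rightward using (\ref{eA(h)relations}) and $[E_m, F_n] = \Psi_{m+n}$ until the two meet and either annihilate $v$ or combine to a strictly shorter degree-zero monomial.

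For the final assertion the forward direction is automatic. For the converse, given an $A[\alpha]$-isomorphism $\phi \colon W_1[0] \xrightarrow{\sim} W_2[0]$, I would form the $\tilde{\A}(g)$-submodule $N \subseteq W_1 \oplus W_2$ generated by the graph $\Gamma = \{(v, \phi(v)) : v \in W_1[0]\}$. Then $N$ is graded, and the key lemma applied to $W_1 \oplus W_2$ gives $N[0] = \Gamma$. The projection $N \to W_1$ is a nonzero graded submodule of the irreducible $W_1$, hence equals $W_1$, and its kernel is a graded submodule of $W_2$ with trivial degree-zero part; by irreducibility of $W_2$, this kernel vanishes. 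Thus $N \cong W_1$, and by symmetry $N \cong W_2$, yielding the desired isomorphism $W_1 \cong W_2$.
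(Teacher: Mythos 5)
Your proposal is correct in substance, and for the first two assertions it follows the same path as the paper --- with the important difference that the paper merely asserts, with no argument, the fact you isolate as the key lemma (that an $A[\alpha]$-submodule $V$ of $W[0]$ generates a graded submodule whose degree-zero part is exactly $V$). Your inductive alternative does prove it: push the rightmost factor of non-negative index to the right using (\ref{eA(h)relations}), noting that each infinite sum is finite on a fixed vector because the module is $\N$-graded and lower truncated, and use $[E_m,F_n]=\Psi_{m+n}$ to drop to strictly shorter degree-zero monomials; when the migrating factor reaches the right end it either kills $v$ or acts by $E_0,F_0,\Psi_0$. By contrast, your preferred route via a triangular decomposition $\tilde{\A}(g)\cong\tilde{\A}(g)_+\otimes A[\alpha]\otimes\tilde{\A}(g)_-$ should be abandoned: the degree-zero subalgebra of $\tilde{\A}(g)$ is generated by all degree-zero monomials (not just $E_0,F_0,\Psi_0$), no P-B-W theorem is available for these infinitely presented algebras (Corollary \ref{basisV} concerns $\A(h)$ and is obtained through the auxiliary vertex algebra $V_{L(\g)}$, a device with no analogue here), and the paper's own Remark after the theorem concedes it cannot even show $U\to M(U)[0]$ is injective, which is essentially the freeness such a decomposition would entail. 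For the isomorphism criterion your argument genuinely diverges from the paper's: the paper constructs a Verma-type module $M(U)$ from the free tensor algebra, uses the existence of $W$ to get $M(U)[0]\simeq U$, and identifies every irreducible $\N$-graded module with degree-zero part $U$ with the unique irreducible quotient $L(U)$; you instead take the submodule of $W_1\oplus W_2$ generated by the graph of the isomorphism and use the key lemma to see it projects isomorphically onto each summand. Both work; the paper's version yields the universal object $M(U)$ and the labelling $W\mapsto L(W[0])$ as extra output, while yours is more self-contained, resting only on the lemma you have already established.
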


\begin{proof}  Suppose that $W=\oplus_{n\in \N}W[n]$ is an $\N$-graded $\tilde{\mathcal{A}}(g)$-module
with $W[0]\ne 0$. Note that $E_0,F_0,\Psi_{0}$ preserve $W[0]$ and that
$E_{n}W[0]=0, F_{n}W[0]=0, \Psi_{n}W[0]=0$ for $n>0$.
From commutation relations (\ref{eA(h)relations}), we see that the following relations hold on $W[0]$:
\begin{eqnarray*}
E_{0}^{2}=\alpha^{-1}E_0^{2}, \  \  \   F_{0}^{2}=\alpha F_0^{2}, \   \  \
\Psi_{0}E_0=\alpha^{-1}E_0\Psi_{0},\   \    \    \Psi_{0}F_0=\alpha F_0\Psi_{0},\   \   \
  [E_0,F_0]=\Psi_0.
\end{eqnarray*}
It follows that $W[0]$ is an $A[\alpha]$-module with $E[0], F[0], \Psi[0]$ acting as
$E_{0},F_0,\Psi_0$, respectively. If $U$ is an $A[\alpha]$-submodule of $W[0]$,
we see that $U$ generates a graded $\tilde{\A}(g)$-submodule of $W$ with $U$ as the degree zero subspace.
Thus $W[0]$ is an irreducible $A[\alpha]$-module if $W$ is irreducible.

Let $U$ be an $A[\alpha]$-module. We now introduce
an $\N$-graded module $M(U)$ of Verma type for $\tilde{\A}(g)$ by using a  tautological construction.
We start with the free (tensor) algebra $T$ generated by  $\check{E}_{n}$,
$\check{F}_{n}$, $\check{\Psi}_{n}$ for $n\in \Z$. Define
$$\deg \check{E}_{n}=-n,\  \  \deg \check{F}_{n}=-n,\  \  \deg \check{\Psi}_{n}=-n \   \   \mbox{ for }n\in \Z,$$
to make $T$ a $\Z$-graded algebra. For $n\in \Z$, let $T_{n}$ denote the homogeneous subspace of degree $n$.
We denote by $T^{0}$ the subalgebra generated by
$\check{E}_{0}$, $\check{F}_{0}$, $\check{\Psi}_{0}$. Notice that $T^{0}$ is a subalgebra of $T_{0}$ and it is also free.
Then there is a homomorphism from  $T^{0}$ onto $A[\alpha]$.
Consequently, $U$ is naturally a $T^{0}$-module. Set $T_{-}=\sum_{n\ge 1}T_{-n}$.
We see that $T_{-}+T^{0}$ is a subalgebra with $T_{-}$ as an ideal.
Let $T_{-}$ act on $U$ trivially to make $U$ a $(T_{-}+T^{0})$-module.
Next, form the induced module
$$M_{T}(U)=T\otimes_{(T_{-}+T^0)}U,$$
 which is an $\N$-graded $T$-module with $\deg U=0$.
It follows that for every $w\in M_{T}(U)$,
$$ \check{E}_{n}w=0,\  \   \check{F}_{n}w=0,\   \  \check{\Psi}_{n}w=0$$
for $n\in \Z$ sufficiently large. Noticing that there is a canonical homomorphism from $T$ onto $\tilde{\A}(g)$,
we then define $M(U)$ to be the quotient module of $M_{T}(U)$
modulo the submodule generated by the elements corresponding to the defining relations of $\tilde{\A}(g)$.
We see that $M(U)$ naturally becomes an $\N$-graded $\tilde{\A}(g)$-module.
Furthermore, $M(U)[0]$ is an $A[\alpha]$-module and the map sending $u$ to $1\otimes u$ is an $A[\alpha]$-module
homomorphism.

Assume that  there exists  an $\N$-graded $\tilde{\A}(g)$-module $W$ with $W[0]\simeq U$ as an $A[\alpha]$-module.
It follows that there is an $\tilde{\A}(g)$-module homomorphism from $M(U)$ to $W$. Consequently,
$M(U)[0]\simeq U$ as an $A[\alpha]$-module. If $U$ is an irreducible  $A[\alpha]$-module,
we see that $M(U)$ has a unique maximal graded submodule, so that $M(U)$ has a unique irreducible quotient
denoted by $L(U)$. Then the last assertion follows immediately.
\end{proof}

\begin{rmk}
{\em Let $U$ be an $A[\alpha]$-module. In the proof of Theorem \ref{classification},
we constructed  an $\N$-graded $\tilde{\A}(g)$-module $M(U)$ and we have a homomorphism
of $A[\alpha]$-modules from $U$ to $M(U)[0]$, sending $u$ to $1\otimes u$ for $u\in U$.
However,  we are unable to prove that this  is an isomorphism.  }
\end{rmk}

Consider the case where $\alpha\ne \pm1$. From the defining relations of $A[\alpha]$, we get
$$E_0^{2}=0,\  \  F_0^{2}=0,\  \  \Psi_0E_0=E_0\Psi_0=0,\  \  \Psi_0F_0=F_0\Psi_0=0,\  \  \Psi_0^{2}=0.$$
Furthermore, we have
$$(E_0F_0)^2=0,\  \  (F_0E_0)^2=0,\   \  (E_0F_0)(F_0E_0)=0=(F_0E_0)(E_0F_0).$$
It then follows that the linear span of $E_0,F_0,\Psi_0, E_0F_0$ is a nilpotent ideal of codimension $1$.
Consequently, up to isomorphism $A[\alpha]$ has exactly one  irreducible
module (with $E_0,F_0,\Psi_0$ acting trivially).

\begin{lem}
Assume $\alpha=-1$. Let $U$ be a $2$-dimensional vector space with a basis $\{ v_1,v_2\}$.
Then for any nonzero complex number $\lambda$, $U$ has an irreducible $A[-1]$-module structure with
 \begin{eqnarray}
 \Psi_0 v_1=\lambda v_1, \  \  \Psi_0 v_2=-\lambda v_2, \   \  F_0v_1=v_2,\   \   F_0v_2=0,\   \
 E_0v_1=0,\  \  E_0v_2=\lambda v_1.
 \end{eqnarray}
Denote this $A[-1]$-module by $U(\lambda)$.
Furthermore,  every finite-dimensional irreducible $A[-1]$-module is either $1$-dimensional or isomorphic to
such a $2$-dimensional module.
\end{lem}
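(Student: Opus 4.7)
First, I would verify that the prescribed formulas for $E_0, F_0, \Psi_0$ on $U = \mathrm{span}\{v_1, v_2\}$ define an $A[-1]$-module by checking each defining relation on the basis. The relations $E[0]^2 = F[0]^2 = 0$ (which in $A[-1]$ take the form $E[0]^2 = -E[0]^2$ and $F[0]^2 = -F[0]^2$) are immediate from the explicit formulas; the anticommutations $\Psi[0]E[0] = -E[0]\Psi[0]$ and $\Psi[0]F[0] = -F[0]\Psi[0]$ reduce to sign tracking using the $\pm \lambda$ eigenvalues of $\Psi_0$; and $[E[0], F[0]] = \Psi[0]$ is a direct evaluation on $v_1$ and $v_2$. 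For irreducibility, since $\lambda \ne 0$ the two eigenvalues $\pm \lambda$ of $\Psi_0$ are distinct, so any $A[-1]$-submodule $N$ decomposes as $(N \cap \C v_1) \oplus (N \cap \C v_2)$, and the relations $F_0 v_1 = v_2$ and $E_0 v_2 = \lambda v_1$ then connect the two lines, forcing $N = 0$ or $N = U$.

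For the classification, the key structural observation is that $\Psi_0^2$ is central in $A[-1]$: squaring the anticommutation $\Psi_0 E_0 = -E_0 \Psi_0$ yields $\Psi_0^2 E_0 = E_0 \Psi_0^2$, similarly for $F_0$, while $\Psi_0^2$ obviously commutes with $\Psi_0$. Hence on any finite-dimensional irreducible $A[-1]$-module $M$, Schur's lemma forces $\Psi_0^2 = c \cdot 1$ for some $c \in \C$, and the analysis bifurcates according to whether $c$ is zero.

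In the case $c \ne 0$, pick $\lambda \in \C$ with $\lambda^2 = c$. Since $\Psi_0$ satisfies the separable polynomial $x^2 - c$, it is diagonalizable on $M$ with eigenspaces $M_{\pm}$ for $\pm \lambda$. The subspace $\ker E_0$ is nonzero (since $E_0^2 = 0$) and is $\Psi_0$-stable (from the anticommutation), so it contains a $\Psi_0$-eigenvector $v$ with $\Psi_0 v = \mu v$ for some $\mu \in \{\pm \lambda\}$. Setting $w = F_0 v$, the computation $E_0 w = [E_0, F_0] v + F_0 E_0 v = \Psi_0 v = \mu v$ together with $F_0 w = F_0^2 v = 0$ and $\Psi_0 w = -\mu w$ shows $w \ne 0$ and that $\mathrm{span}\{v, w\}$ is a nonzero $A[-1]$-submodule isomorphic to $U(\mu)$; irreducibility of $M$ then gives $M \cong U(\mu)$, confirming in particular that $M$ is two-dimensional.

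In the case $c = 0$, $\Psi_0$ is nilpotent on $M$, so $\ker \Psi_0 \ne 0$; moreover for $u \in \ker \Psi_0$ we have $\Psi_0 E_0 u = -E_0 \Psi_0 u = 0$, and similarly for $F_0$, so $\ker \Psi_0$ is a nonzero submodule and irreducibility forces $\Psi_0 = 0$ on $M$. Then $[E_0, F_0] = 0$, so $E_0$ and $F_0$ are commuting nilpotents generating a commutative finite-dimensional subalgebra; its irreducible finite-dimensional representations are one-dimensional, and since $E_0, F_0$ are nilpotent they must act by zero, so $M$ is one-dimensional with the trivial action. The only real subtlety is the $c = 0$ case, where one must leverage both the anticommutation relations and $\Psi_0^2 = 0$ to promote $\ker \Psi_0$ to a genuine submodule before concluding $\Psi_0 = 0$.
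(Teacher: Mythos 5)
Your proof is correct. The verification of the module structure and of irreducibility matches the paper's in substance (the paper simply exhibits the algebra homomorphism $\rho_\lambda$ to the $2\times 2$ matrix algebra and declares the check straightforward). For the classification, however, your route differs in its key device: you observe that $\Psi_0^2$ is central in $A[-1]$ and invoke Schur's lemma to get $\Psi_0^2=c\cdot 1$, then split on $c=0$ versus $c\ne 0$. The paper instead first records a spanning set $E_0^iF_0^j\Psi_0^k$ ($i,j\in\{0,1\}$, $k\ge 0$) of $A[-1]$ and deduces semisimplicity of $\Psi_0$ from $W=A[-1]v$ for a single eigenvector $v$; its case split is on whether $0$ is an eigenvalue of $\Psi_0$, which is the same dichotomy as yours. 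In the invertible case both arguments produce a $\Psi_0$-eigenvector $v\in\ker E_0$ and show that $v$ and $F_0v$ span; your version is slightly more careful in noting explicitly that $\ker E_0$ is nonzero and $\Psi_0$-stable, a point the paper leaves implicit. In the degenerate case the paper shows $E_0W=F_0W=0$ by a direct contradiction using the spanning set of $A[-1]E_0w$, whereas you pass to the commutative quotient by the ideal generated by $\Psi_0$ and use that finite-dimensional irreducible modules over a commutative algebra are one-dimensional. Your Schur-based argument has the advantage of not relying on the paper's unproved basis claim for $A[-1]$ (only the relations are needed); the paper's version yields a bit more explicit structural information about the algebra itself.
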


\begin{proof} In this case, the defining relations of $A[\alpha]$ can be written as
\begin{eqnarray}
E_0^{2}=0=F_0^{2}, \  \  \   \Psi_{0}E_0=-E_0\Psi_{0},\   \  \
\Psi_{0}F_0=-F_0\Psi_{0},\   \   \
  [E_0,F_0]=\Psi_0.
\end{eqnarray}
From this, we see that $A[-1]$ has a basis $E_0^iF_0^j\Psi_0^{k}$ for $i,j=0,1,\ k=0,1,\dots$.
To show the first assertion, we need to show that
there is an algebra homomorphism $\rho_{\lambda}$ from $A[-1]$ (on)to the matrix algebra $M(2,\C)$
such that
\begin{eqnarray}
\rho_{\lambda}(E_0)=\lambda E_{12},\    \   \   \rho_{\lambda}(F_0)=E_{21},\   \   \   \rho_{\lambda}(\Psi_0)=
\lambda(E_{11}-E_{22}).
\end{eqnarray}
The proof of the latter is straightforward.

Suppose that $W$ is a finite-dimensional irreducible $A[-1]$-module.
It follows that $\Psi_0$ is semisimple on $W$.
(This is because there is an eigenvector
$v$ of $\Psi_0$ of some eigenvalue $\lambda$ and $W=A[-1]v$.) Suppose that $0$ is an eigenvalue of $\Psi_0$, i.e.,
there exists a nonzero vector $w$ such that $\Psi_0 w=0$. As $W=A[-1]w$, it follows that $\Psi_0=0$ on $W$.
We show $E[0]W=0=F[0]W$. Suppose $E_0w\ne 0$ for some vector $w\in W$.  Then $W=A[-1]E_0w$.
Note that $A[-1]E_0w$ is linearly spanned by $E_0w$ and $F_0E_0w$. As $E_0(E_0w)=0$ and $E_0(F_0E_0w)=0$,
we have $E_0A[-1]E_0w=0$. But $w\in A[-1]E_0w$ and $E_0w\ne 0$, a contradiction. Thus $E_0W=0$.
Similarly, we have $F_0W=0$. Then $W$ must be $1$-dimensional.

Now, assume that $0$ is not an eigenvalue of $\Psi_0$. As $[E_0,F_0]=\Psi_0$ and $\Psi_0W\ne 0$,
we have $E_0W\ne 0$.
Then there exist nonzero $w\in W$ and $\lambda\in \C^{\times}$ such that $E_0w=0$ and $\Psi_0w=\lambda w$.
With $W$ is irreducible, we have $W=A[-1]w=\C w+\C F_0w$, where
$E_0(F_0w)=(-1)\Psi_0w=-\lambda w$.
It follows that $W\simeq U(\lambda)$.
\end{proof}

\begin{rmk}
{\em Consider the case with $\alpha=1$. We see that $A[1]$  is isomorphic to the universal enveloping algebra of
the $3$-dimensional Heisenberg Lie algebra with base vectors $E_0,F_0,\Psi_0$ such that
$$[E_0,F_0]=\Psi_0,\   \   \  [\Psi_0,E_0]=0,\   \   \   [\Psi_0,F_0]=0.$$
There are at least three types of infinite-dimensional irreducible modules, which are
highest weight modules, lowest weight modules with $\Psi_0$ acting as nonzero scalars, and intermediate series modules
with $\Psi_0$ acting trivially.
A complete classification of irreducible $A[1]$-modules is not known.}
\end{rmk}

\end{document}